\DeclareSymbolFont{rsfscript}{OMS}{rsfs}{m}{n}
\DeclareSymbolFontAlphabet{\mathrsfs}{rsfscript}
\DeclareMathOperator{\Aut}{Aut}
\DeclareMathOperator{\Sch}{Sch}
\DeclareMathOperator{\Geo}{Geo}
\DeclareMathOperator{\Cay}{Cay}
\DeclareMathOperator{\Star}{Star}
\DeclareSymbolFont{rsfscript}{OMS}{rsfs}{m}{n}
\newtheorem{theorem}{Theorem}
\newtheorem{prop}{Proposition}
\newtheorem{defn}{Definition}
\newtheorem{lemma}{Lemma}
\newtheorem{rem}{Remark}
\newtheorem*{theorem*}{Theorem}
\newcommand{\la}{\langle}
\newcommand{\ra}{\rangle}
\newcommand{\wt}{\widetilde}
\newcommand{\oo}{\overline}
\def\vlongrightarrow{\relbar\joinrel\longrightarrow}
\def\vvlongrightarrow{\relbar\joinrel\vlongrightarrow}
\def\vvvlongrightarrow{\relbar\joinrel\vvlongrightarrow}
\def\longmapright#1{\smash{\mathop{\vlongrightarrow}\limits^{#1}}}
\def\vlongmapright#1{\smash{\mathop{\vvlongrightarrow}\limits^{#1}}}
\def\vvlongmapright#1{\smash{\mathop{\vvvlongrightarrow}\limits^{#1}}}
\newcommand{\mapright}[1]{\smash{\stackrel{\text{\tiny{$#1$}}}{\vlongrightarrow}}}
\begin{document}
\title{Generalizations of the Muller-Schupp theorem and tree-like inverse graphs}
\author{Emanuele Rodaro}
\address{Emanuele Rodaro, Politecnico di Milano - Piazza Leonardo da Vinci, 32 20133 Milano, Italia}
\email{emanuele.rodaro@polimi.it}



\begin{abstract}
We extend the characterization of context-free groups of Muller and Schupp in two ways. We first show that for a quasi-transitive inverse graph $\Gamma$, being quasi-isometric to a tree, or context-free (finitely many end-cones types), or having the automorphism group $\Aut(\Gamma)$ that is virtually free, are all equivalent conditions. Furthermore, we add to the previous equivalences a group theoretic analog to the representation theorem of Chomsky-Schützenberger that is fundamental in solving a weaker version of a conjecture of T. Brough which also extends Muller and Schupp' result to the class of groups that are virtually finitely generated subgroups of direct product of free groups. We show that such groups are precisely those whose word problem is the intersection of a finite number of languages accepted by quasi-transitive, tree-like inverse graphs. 
\end{abstract}

\maketitle

\noindent {\footnotesize{\bf Mathematics Subject Classification (2020)}: 20F65, 05C75, 20F10, 68Q70, 20M18, 57M10.}
\smallskip

\noindent {\footnotesize{\bf Keywords}: group actions on inverse graphs, quasi-tree, word-problem, virtually free groups, context-free graphs and languages, subgroups of the direct product of free groups.}


\section{Introduction}
The word problem of a finitely generated group $G$ presented by $\la A|\mathcal{R}\ra$ consists of the set $WP(G; A)$ o all words in $(A\cup A^{-1})^{*}$ that represent the identity of $G$. One of the most interesting problems in this setting is the algebraic characterization of a group $G$ in terms of the language theoretic properties of  $WP(G; A)$. For example, Anisimov proved that a group is finite if and only if its word problem is a regular language \cite{asimo}. A famous result in this area is the classification of the groups with context-free word problems (context-free groups) which states that a finitely generated group $G$ has a context-free word problem if and only if $G$ is virtually free; a result that follows from the work of Muller and Schupp \cite{muahu} together with the work of Dunwoody \cite{dun} on the accessibility of finitely presented groups. Moreover, geometrically context-free groups are characterized by Cayley graphs which are quasi-isometric to trees \cite[Theorem 19 pp. 133]{harpe}. In this paper, we extend these results to the class of inverse graphs, i.e., graphs in the sense of Serre labeled on an involutive alphabet $A\cup A^{-1}$ such that for every $a\in A\cup A^{-1}$ and vertex $v$ there is at most one edge $e$ starting at $v$ labeled by $a$, and consequently, the inverse edge $e^{-1}$ has $v$ as its terminal vertex and it is labeled by $a^{-1}$. These graphs often appear in several areas of mathematics, especially geometric group and semigroup theory via Cayley and Schreier graphs in the area of group theory, and Sch\"{u}tzenberger graphs that are the analog of Cayley graphs in the area of inverse semigroup theory. An inverse graph $\Gamma$ is called quasi-transitive if it has a finite number of orbits under the action of its automorphism group $\Aut(\Gamma)$. 
One of the main results of the paper, Theorem~\ref{theo: main theorem}, stands at the intersection of graph theory, geometric group theory, and theoretical computer science:
\begin{theorem*}
Let $\Gamma$ be an infinite quasi-transitive inverse graph. T.F.A.E.
\begin{enumerate}
\item $\Gamma$ is context-free (it has finitely many end-cones types);
\item $\Gamma$ is quasi-isometric to a tree (it is tree-like);
\item $\Aut(\Gamma)$ is a finitely generated virtually free group;
\item there is a cover $\psi: \Gamma\to \Lambda$ of inverse graphs with $\Lambda$ finite, and a surjective morphism
 $$\oo{\eta}:\pi_1(\Lambda, \psi(x_0))\to \mathbb{F}_X$$ from the fundamental group $\pi_1(\Lambda, \psi(x_0))$ onto a free group $\mathbb{F}_X$ of finite rank, with $\ker(\oo{\eta})$ that is equal to the set of reduced words in the language $L(\Gamma, x_0)$ of the labels of circuits based at the root $x_0$.

\end{enumerate}
\end{theorem*}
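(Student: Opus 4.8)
The plan is to prove $(1)\Leftrightarrow(2)$, then $(2)\Leftrightarrow(3)$, and finally to splice in the new statement via the chain $(3)\Rightarrow(4)\Rightarrow(2)$, thereby reducing everything to two engines: the graph-theoretic Muller--Schupp dichotomy relating end-cone types to the quasi-tree geometry, and the Milnor--\v{S}varc lemma applied to the action of $\Aut(\Gamma)$. The decisive structural feature that makes the second engine available is special to inverse graphs: since every automorphism is label-preserving and each vertex admits at most one outgoing edge with a given label, an automorphism fixing one vertex of a connected component fixes that whole component. Hence $\Aut(\Gamma)$ acts \emph{freely} on $\Gamma$, and quasi-transitivity means this free action is cocompact. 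As the graphs under consideration are locally finite, Milnor--\v{S}varc then yields at once that $\Aut(\Gamma)$ is finitely generated and quasi-isometric to $\Gamma$.

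For $(1)\Leftrightarrow(2)$ I would invoke the characterization, already available for quasi-transitive inverse graphs, that having finitely many end-cone types is equivalent to being quasi-isometric to a tree; this is the genuinely graph-theoretic core of Muller--Schupp \cite{muahu} transported to the inverse-graph setting, and it mentions no group. For $(2)\Leftrightarrow(3)$ I combine the free cocompact action above with the geometric form of the theorem: by Milnor--\v{S}varc the spaces $\Gamma$ and $\Aut(\Gamma)$ are quasi-isometric, and a finitely generated group is quasi-isometric to a tree if and only if it is virtually free \cite[Theorem 19]{harpe}. Thus $\Gamma$ is tree-like precisely when $\Aut(\Gamma)$ is virtually free, giving both directions simultaneously.

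The new content is $(3)\Rightarrow(4)$, which I would prove by construction. Assuming $\Aut(\Gamma)$ virtually free, choose a free subgroup $\mathbb{F}_X$ of finite rank and finite index; it still acts freely on the connected graph $\Gamma$, and finite index makes the action cocompact. Set $\Lambda=\Gamma/\mathbb{F}_X$, a finite inverse graph, and let $\psi\colon\Gamma\to\Lambda$ be the quotient map. A free action of a group on a connected graph makes the quotient map a regular (Galois) cover whose deck group is exactly $\mathbb{F}_X$, so the covering-space exact sequence reads
$$1\longrightarrow \pi_1(\Gamma,x_0)\xrightarrow{\ \psi_\ast\ }\pi_1(\Lambda,\psi(x_0))\xrightarrow{\ \oo{\eta}\ }\mathbb{F}_X\longrightarrow 1 .$$
This supplies the required surjection $\oo{\eta}$, and $\ker(\oo{\eta})=\psi_\ast\pi_1(\Gamma,x_0)$ is precisely the set of loops in $\Lambda$ that lift to loops at $x_0$ in $\Gamma$. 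Using that $\Gamma$ is folded in both directions, reduced edge-paths correspond bijectively to reduced label-words, so this kernel is exactly the set of reduced words in $L(\Gamma,x_0)$, as claimed.

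Finally I close the loop with $(4)\Rightarrow(2)$: the data in $(4)$ exhibit $\mathbb{F}_X$ as the deck group of a regular cover of the \emph{finite} graph $\Lambda$, hence $\mathbb{F}_X$ acts freely and cocompactly on $\Gamma$, and a second application of Milnor--\v{S}varc shows $\Gamma$ is quasi-isometric to the free group $\mathbb{F}_X$, thus to a tree. I expect the main obstacle to lie in the bookkeeping of $(3)\Rightarrow(4)$: one must verify carefully that the quotient of an inverse graph by a free automorphism subgroup is again an inverse graph covered by $\Gamma$, that the resulting cover is regular so that the exact sequence is valid, and above all that $\psi_\ast\pi_1(\Gamma,x_0)$ matches the \emph{reduced} circuit language rather than merely the set of all closed-walk labels. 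Secondary care is needed to guarantee local finiteness, so that Milnor--\v{S}varc applies in both $(2)\Leftrightarrow(3)$ and $(4)\Rightarrow(2)$.
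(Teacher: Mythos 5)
Your middle block is sound: the equivalences $(2)\Leftrightarrow(3)$ and the chain $(3)\Rightarrow(4)\Rightarrow(2)$ are correct as you set them up. The key observation that $\Aut(\Gamma)$ acts \emph{freely} on an inverse graph (label-preservation plus determinism plus connectivity; this is Lemma~\ref{lem: uniqueness of automorphism} in the paper), hence properly, so that Milnor--\v{S}varc applies to the quasi-transitive action, is exactly the route the paper itself sanctions in the remark following Theorem~\ref{theo: main theorem}. Your $(3)\Rightarrow(4)$ via the regular cover $\Gamma\to\Gamma/\mathbb{F}_X$ and its exact sequence is a covering-space repackaging of the paper's construction, which builds the same quotient as a $DV$-quotient $\Gamma/\rho_W$ with $W=\mathbb{F}_X\cdot x_0$ and proves surjectivity and the kernel identification by hand (Lemmas~\ref{lem: it is a covering}, \ref{lem: finite index} and Proposition~\ref{prop: language inverse homomorphism}); the two are equivalent, and your $(4)\Rightarrow(2)$ (read off regularity of the cover from the normality of $\ker(\oo{\eta})=\oo{L(\Gamma,x_0)}$, then apply Milnor--\v{S}varc to the deck action) is a legitimate shortcut the paper does not take.

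The genuine gap is your treatment of $(1)$, which is attached to the rest of the proof only through the claim that $(1)\Leftrightarrow(2)$ is ``already available for quasi-transitive inverse graphs.'' No such result exists to cite: it is precisely the new content of this theorem. The direction $(1)\Rightarrow(2)$ can be salvaged (either by forgetting labels and citing the unlabeled Muller--Schupp theory, or, under the paper's official definition of context-free as context-freeness of $L(\Gamma,x_0)$, by the grammar/decoration machinery of Proposition~\ref{prop: context-free implies tree-like}). But $(2)\Rightarrow(1)$ is not a transport of known results: end-isomorphisms of inverse graphs are required to preserve edge labels, and finitely many \emph{unlabeled} end-cone types --- the most the classical theory could give --- does not imply finitely many \emph{labeled} types; note also that the parenthetical identification of ``context-free'' with ``finitely many end-cone types'' is itself the nontrivial Proposition~\ref{prop: equivalence context-free}. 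The paper obtains $(2)\Rightarrow(1)$ only by going around the loop: from $(4)$ it runs a Chomsky--Sch\"utzenberger-type argument (writing $\ker(\oo{\eta})$ as the inverse-homomorphic preimage of the Dyck language intersected with the regular set of reduced words, so that $\oo{L(\Gamma,x_0)}$ is context-free), then invokes Lemma~\ref{lem: closure property for coverings} to lift context-freeness from $\oo{L(\Gamma,x_0)}$ to $L(\Gamma,x_0)$ through the finite cover $\Lambda$. Nothing in your proposal replaces this step, so as written you have proved $(2)\Leftrightarrow(3)\Leftrightarrow(4)$ but not their equivalence with $(1)$.
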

In the same spirit of the previous theorem, in \cite{LiWo} the authors characterize quasi-transitive deterministic $A$-digraphs whose languages of the self-avoiding walks are context-free with respect to the size of the ends. 
The generalization of the notion of context-freeness from groups to a generic inverse graph is not immediate at first, since there might be several ways to give such a definition. Since for a group the word problem corresponds to the set of labels of the walks starting and ending in a vertex of the Cayley graph, we can transfer this definition textually to an inverse graph.
In Section~\ref{sec: context-free graphs} we prove that this definition is equivalent to several others, including the more graph theoretic one of context-free graph given by  Muller and Schupp in \cite{muahu2} as a labeled graph with finitely many labeled end-cones types. We also characterize such graphs as the configuration graphs of inverse pushdown automata, a variant of reversible pushdown automata introduced in \cite{kutrib}. 
\\
To prove the implication $(1)\Rightarrow (2)$ of our characterization, in Section~\ref{sec: geometrical considerations} we show that any context-free inverse graph is quasi-isometric to a tree. As a consequence, the converse of \cite[Theorem 4.13]{Pedro Nora} holds, providing a solution to \cite[Question 6.2]{Pedro Nora} and the following geometric characterization
\begin{theorem*}
Let $M$ be a finitely presented inverse monoid. Then, a Sch\"{u}tzenberger graph of some element of $M$ is context-free if and only if it is quasi-isometric to a tree.
\end{theorem*}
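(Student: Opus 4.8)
The plan is to derive the equivalence from two implications, combining the general geometric result of Section~\ref{sec: geometrical considerations} with a known theorem from the literature. The first step is the observation that a Sch\"{u}tzenberger graph $\Gamma$ of an element of $M$ is, by its very definition, a connected inverse graph labeled over the involutive alphabet $A\cup A^{-1}$, where $A$ is a finite generating set of $M$. Since $M$ is finitely generated, $\Gamma$ has uniformly bounded degree (at most $2|A|$), so that both being context-free in the sense of finitely many end-cone types and being quasi-isometric to a tree are meaningful and agree with the notions used throughout the paper.

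For the implication \emph{context-free $\Rightarrow$ quasi-isometric to a tree}, I would invoke directly the result of Section~\ref{sec: geometrical considerations}, namely that \emph{any} context-free inverse graph is quasi-isometric to a tree; applying it to the inverse graph $\Gamma$ gives the conclusion at once. This is exactly the converse of \cite[Theorem 4.13]{Pedro Nora}, and it settles \cite[Question 6.2]{Pedro Nora} affirmatively. For the reverse implication \emph{quasi-isometric to a tree $\Rightarrow$ context-free}, I would cite \cite[Theorem 4.13]{Pedro Nora}, which establishes precisely this direction for Sch\"{u}tzenberger graphs of finitely presented inverse monoids; the finite-presentation hypothesis in our statement is inherited from there. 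Assembling the two implications yields the stated equivalence.

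The only genuine subtlety --- and what I expect to be the main obstacle --- is that the first implication must hold \emph{without} any quasi-transitivity hypothesis. A Sch\"{u}tzenberger graph is in general very far from quasi-transitive: its automorphism group may be trivial and it need not be vertex-transitive, so one cannot appeal to Theorem~\ref{theo: main theorem} as stated for quasi-transitive graphs. The crux is therefore that the argument establishing $(1)\Rightarrow(2)$ in Section~\ref{sec: geometrical considerations} is genuinely a statement about arbitrary inverse graphs and never uses the finiteness of the orbit set under $\Aut(\Gamma)$. Once this generality is secured, the proof of the present theorem is just the juxtaposition of the two cited facts.
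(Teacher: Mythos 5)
Your proposal is correct and takes essentially the same route as the paper: the tree-like $\Rightarrow$ context-free direction is cited from \cite[Theorem 4.13]{Pedro Nora}, and the converse is exactly Proposition~\ref{prop: context-free implies tree-like}, whose proof — as you rightly stress — never uses quasi-transitivity. The only cosmetic difference is that the paper, because the Sch\"utzenberger automaton is birooted with language $L(\Gamma,x_0,\{y_0\})$, additionally invokes Proposition~\ref{prop: equivalence context-free} to pass from context-freeness of that accepted language to context-freeness of the graph itself.
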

We emphasize that also this result generalizes the analogous one for groups since a group is a special case of inverse monoid and the (unique) Sch\"{u}tzenberger graph of a group coincides with its Cayley graph. While the equivalence $(2)\Leftrightarrow (3)$ can be obtained directly from standard facts in literature, to close all the equivalences we pass from the technical condition $(4)$ which is a group theoretic representation theorem \`a la Chomsky–Schützenberger. This condition is a key fact that it is used in the last section to shed some light on a conjecture regarding poly-context-free groups. As a natural extension of context-free groups, and thus to virtually free groups, Brough in \cite{Tara} has introduced the concept of poly-context-free groups. A group is called poly-context-free if its word problem is the intersection of a finite number of context-free languages. Since the intersection of context-free languages is not in general context-free, a natural problem is the classification of such groups. In \cite{Tara} the author has conjectured that the class of finitely generated poly-context-free groups coincides with the class of (finitely generated) groups which are virtually a finitely generated subgroup of a direct product of free groups. Using condition (4) in Section~\ref{sec: weaker tara's conjecture} we show the other main result of the paper which also offers another generalization of the theorem of Muller and Schupp. 
\begin{theorem*}
A group $G$ has the word problem in the class of languages that are intersections $\bigcap_{i=1}^kL(\Gamma_i, x_i)$ of finitely many languages accepted by quasi-transitive tree-like inverse graphs $\Gamma_i$, if and only if it is virtually a (finitely generated) subgroup of a direct product of free groups. 
\end{theorem*}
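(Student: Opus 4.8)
The plan is to prove both implications by translating the language-theoretic hypothesis into the geometry of finitely generated subgroups of the free group $\mathbb{F}_A$ on a finite generating set $A$ of $G$, via the Stallings correspondence between pointed inverse graphs and subgroups, and then to feed in condition $(4)$ of Theorem~\ref{theo: main theorem}. Throughout, write $q\colon \mathbb{F}_A\to G$ for the canonical projection and $N=\ker q$, so that the freely reduced words of $WP(G;A)$ are exactly the elements of $N$. For a complete pointed inverse graph $(\Gamma,x_0)$ over $A\cup A^{-1}$ I use that the reduced words of $L(\Gamma,x_0)$ form precisely the subgroup $\pi_1(\Gamma,x_0)\le\mathbb{F}_A$, and that $L(\Gamma,x_0)=\{w : \overline w\in\pi_1(\Gamma,x_0)\}$, where $\overline w$ denotes free reduction.

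For the implication from the word problem to the virtual subgroup, suppose $WP(G;A)=\bigcap_{i=1}^k L(\Gamma_i,x_i)$ with each $\Gamma_i$ quasi-transitive and tree-like. First I would observe that each $\Gamma_i$ is complete: since every word of the form $u\,aa^{-1}\,u^{-1}$ lies in $WP(G;A)\subseteq L(\Gamma_i,x_i)$, the edge labelled $a$ is defined at every vertex reachable from $x_i$, so (as $\Gamma_i$ is connected) $\Gamma_i$ covers the bouquet $B_A$ and $P_i:=\pi_1(\Lambda_i)$ is a finite-index subgroup of $\mathbb{F}_A$. Setting $K_i:=\pi_1(\Gamma_i,x_i)$, intersecting reduced languages gives $N=\bigcap_i K_i$. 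Now condition $(4)$ of Theorem~\ref{theo: main theorem} supplies the crucial structural input: $K_i\trianglelefteq P_i$ and $P_i/K_i\cong\mathbb{F}_{X_i}$ is free. Put $P:=\bigcap_i P_i$, a finite-index subgroup of $\mathbb{F}_A$ containing $N$, and let $\Phi=(\phi_i)_i\colon P\to\prod_{i=1}^k\mathbb{F}_{X_i}$ be induced by the restrictions $P\hookrightarrow P_i\twoheadrightarrow\mathbb{F}_{X_i}$. Then $\ker\Phi=\bigcap_i(P\cap K_i)=P\cap N=N$, so $\Phi$ descends to an embedding $P/N\hookrightarrow\prod_i\mathbb{F}_{X_i}$. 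As $P/N$ is a finite-index, hence finitely generated, subgroup of $G=\mathbb{F}_A/N$, this exhibits $G$ as virtually a finitely generated subgroup of a direct product of free groups.

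For the converse, let $H\le G$ be a finite-index subgroup admitting an embedding $\psi=(\psi_i)_i\colon H\hookrightarrow\prod_{i=1}^k\mathbb{F}_i$. Let $P:=q^{-1}(H)\le\mathbb{F}_A$, a finite-index subgroup with $P/N\cong H$, and set $K_i:=\ker\big(P\twoheadrightarrow H\xrightarrow{\psi_i}\mathbb{F}_i\big)$. Then $K_i\trianglelefteq P$, the quotient $P/K_i\cong\psi_i(H)$ is free (a subgroup of the free group $\mathbb{F}_i$), and $\bigcap_i K_i=N$ because $\psi$ is injective. I then take $\Gamma_i$ to be the Schreier coset graph of $K_i$ in $\mathbb{F}_A$ pointed at $x_i=K_i$; it is a complete inverse graph over $A\cup A^{-1}$ with $\pi_1(\Gamma_i,x_i)=K_i$, whence $L(\Gamma_i,x_i)=\{w:\overline w\in K_i\}$ and $\bigcap_i L(\Gamma_i,x_i)=\{w:\overline w\in N\}=WP(G;A)$. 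It remains to check that each $\Gamma_i$ is quasi-transitive and tree-like. For this I use that the coset graph $\Lambda_i$ of $P$ in $\mathbb{F}_A$ is finite and that the refinement map $\Gamma_i\to\Lambda_i$ is a regular cover with deck group $P/K_i$; since $P/K_i$ is free, it acts freely and cocompactly on $\Gamma_i$, so $\Gamma_i$ has finitely many $\Aut(\Gamma_i)$-orbits and is quasi-isometric to the tree of $P/K_i$. By Theorem~\ref{theo: main theorem} this is exactly the property of being quasi-transitive and tree-like, completing the construction.

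I expect the main technical obstacle to be the passage between full languages and reduced words in the presence of the a priori non-normal subgroups $K_i$: one must argue that $\bigcap_i L(\Gamma_i,x_i)$ really is a word problem, that its reduced part is $\bigcap_i K_i$, and crucially that this intersection of non-normal subgroups equals the normal subgroup $N$ and is insensitive to the chosen basepoints. The completeness observation above, together with condition $(4)$ guaranteeing that each $K_i$ is normal with free quotient inside a finite-index $P_i$, is what makes both the identity $\bigcap_i K_i=N$ and the embedding into a product of free groups fall into place; verifying the regularity of the covers $\Gamma_i\to\Lambda_i$ and the freeness of the deck groups is the step where Theorem~\ref{theo: main theorem} is indispensable.
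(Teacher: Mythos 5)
Your first implication coincides with the paper's own proof: completeness of each $\Gamma_i$, condition (4) of Theorem~\ref{theo: main theorem} applied to each factor, intersection of the finite-index subgroups $\pi_1(\Lambda_i,\psi_i(x_i))$, and the product homomorphism whose kernel is the set of reduced words of $WP(G;A)$ --- this is exactly the paper's argument (the paper's $\mathcal{H}$ is your $P$).

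Your converse, however, takes a genuinely different route. The paper switches to a generating set $Y=X\cup(T\setminus\{\mathds{1}_G\})$ adapted to $H$ and a transversal $T$, encodes the coset structure into an inverse transducer $\mathcal{A}$, takes for $\Gamma_i$ the Cayley graphs $\Cay(H/\ker\pi_i;X)$, and invokes Lemma~\ref{lemma: product is quasi-transitive} to conclude that each $\mathcal{A}\ltimes\Gamma_i$ is a quasi-transitive inverse graph accepting $\mathcal{A}^{-1}(L(\Gamma_i,x_i))$, whence $WP(G;Y)=\bigcap_i\mathcal{A}^{-1}(L(\Gamma_i,x_i))$. You instead remain over the original generating set $A$, pull $H$ back to $P=q^{-1}(H)\le\mathbb{F}_A$, and realize the kernels $K_i\trianglelefteq P$ directly by their Schreier graphs $\Sch(K_i,A)$: normality of $K_i$ in $P$ gives a free, label-preserving action of $P/K_i\cong\psi_i(H)$ with finite quotient $\Sch(P,A)$, hence quasi-transitivity, and the \v{S}varc--Milnor lemma together with the fact that $\psi_i(H)$ is a finitely generated free group gives tree-likeness; your closing appeal to Theorem~\ref{theo: main theorem} is actually superfluous there, since quasi-transitive and quasi-isometric to a tree is literally what you have just verified. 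Your route avoids the transducer machinery altogether and proves something slightly stronger, namely that the word problem over \emph{any} finite generating set of $G$ lies in the class, whereas the paper's construction certifies only the particular set $Y$; what the paper's route buys instead is the reusable closure property of Lemma~\ref{lemma: product is quasi-transitive} (in the spirit of \cite{holt}), i.e.\ stability of the class under inverse images of inverse transducers. Two cosmetic remarks: you refer to $\pi_1(\Lambda_i)$ before the finite cover $\Lambda_i$ has been produced by condition (4), and (exactly as in the paper) citing Theorem~\ref{theo: main theorem} for each $\Gamma_i$ tacitly assumes $\Gamma_i$ infinite --- for a finite complete $\Gamma_i$ condition (4) holds trivially with $\Lambda_i=\Gamma_i$ and the trivial free group, so nothing breaks.
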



\section{labeled digraphs, involutive and inverse graphs}\label{sec:preliminaires}
Throughout the paper, we assume that $A = \{a_1\ldots  a_n\}$ is a finite set of letters that we call alphabet. The free semigroup on $A$ is denoted by $A^+$, and the free monoid on $A$ with empty word $1$ is denoted by $A^*$. Any subset of $A^*$ is called a language (over $A$). We denote by $\mathbb{F}_A$ the free group with basis $A$, that is, the group with presentation $\la A\mid \emptyset\ra$. We denote by $A^{-1}=\{a^{-1}:a\in A\}$ the set of formal inverses of the alphabet $A$, and by $\wt{A}=A\cup A^{-1}$ the involutive closure of $A$. We equip $\wt{A}^*$ with the usual involution $\circ^{-1}:\wt{A}^+\to \wt{A}^+$ defined by $(u^{-1})^{-1}=u$, $(uv)^{-1}=v^{-1}u^{-1}$ for all $u,v\in \wt{A}^+$. A word $w\in \wt{A}^*$ is called \emph{(freely) reduced} if it does not contain any factor of the form $uu^{-1}$ for some $u\in \wt{A}^*$. It is a well-known fact that given a word $w\in \wt{A}^*$ by successively removing pairs of consecutive inverse letters after a finite number of steps we end up with a unique reduced word $\oo{w}$, called the \emph{free reduction of $w$}. Each element of $\mathbb{F}_A$ may be identified with the equivalence class $[u]=\{w\in \wt{A}^*:\oo{w}=u\}$ for some unique reduced word $u$ which uniquely identifies such class. Thus, the free group $\mathbb{F}_A$ (with basis $A$) can then be thought as the set of reduced words in $\wt{A}^*$ with the operation defined by $\oo{u}\cdot \oo{v}=\oo{uv}$. With a slight abuse of notation we will denote the identity of $\mathbb{F}_A$ by $1$, while for a generic group $G$, we will use the notation $\mathds{1}_G$. For a generic subset $L\subseteq \wt{A}^*$, by $\oo{L}=\{\oo{u}:u\in L\}$ we denote the set of all the reduced words that may be obtained from $L$. 
\\
An $A$-digraph (or $A$-graph, or just graph) is a tuple $\Gamma=(V,E,\iota, \tau, \lambda)$ where $V=V(\Gamma)$ is the set of vertices, $E=E(\Gamma)$ the set of oriented edges, $A$ the set of labels, $\lambda:E\to A$ is the labeling function, and $\iota:E\to V$, $\tau:E\to V$ are the initial and final (incident) functions, where for all $e\in E$ we interpret $\iota(e)$, $\tau(e)$ as the initial and terminal vertices of the edge $e$, respectively. Throughout the paper, we mostly adopt a visual approach to such graphs, and we will graphically represent a generic edge $e\in E$ with $\omega(e)=v_1$, $\tau(e)=v_2$ and $\lambda(e)=a$ as $v_1\mapright{a}v_2$. Although there are no edges labeled by the empty word, in some arguments sometimes will be useful to consider empty edges of the form $v_1\mapright{1}v_1$. 
We say that $\Gamma$ is \emph{deterministic} (\emph{complete}) if for any vertex $v$ and $a\in A$ there is at most (at least) one edge $v\mapright{a}v'$ for some $v'\in V(\Gamma)$; and it is \emph{co-deterministic} if for any vertex $v'$ and $a\in A$ there is at most one edge $v\mapright{a}v'$ incident to $v'$. We reserve the name \emph{simple graphs} for a pair $(V,E)$ where $V$ is a set of vertices and the set of unoriented edges $E$ is a $2$-subset of $V$; a generic edge $e$ is of the form $\{v_1,v_2\}$ where $v_1,v_2\in V$ are the endpoints. The \emph{associated simple graph} to the $A$-direct graph $\Gamma$ is the simple graph with the same set of vertices $V(\Gamma)$ of $\Gamma$ and where two vertices $v_1, v_2$ are connected by an edge if there is an edge $v_1\mapright{a} v_2$ or $v_2\mapright{b} v_1$ for some $a,b\in A$. 
\\
An $A$-digraph $\Gamma'=(V',E',\omega, \tau,\lambda)$ with $V'\subseteq V$, $E'\subseteq E$ is called a \emph{subgraph} of $\Gamma$, and we write $\Gamma'\subseteq \Gamma$. We say that $\Gamma'$ is \emph{induced} if for any $e\in E$ with $\iota(e)\in V'$ or $\tau(e)\in V'$ we have $e\in E'$. For any subgraph $\Gamma'$ of $\Gamma$, we denote by $\la \Gamma'\ra$ the intersection of all the induced subgraphs containing $V(\Gamma')$. The difference of $\Gamma$ with $\Gamma'$ is the subgraph $\Gamma\setminus \Gamma'=\la V(\Gamma)\setminus V(\Gamma')\ra$. A \emph{walk} of length $k$ is a subgraph consisting of a collection of successive edges $p=e_1\ldots e_k$ with the property $\iota(e_{i+1})=\tau(e_{i})$ for $i=1,\ldots, k-1$. The label of $p$ is the word $\lambda(p)=\lambda(e_1)\ldots \lambda(e_k)\in A^+$ and if we want to emphasize the label we will sometimes depict such walk as 
\[
p=x_1\mapright{a_1}x_2\mapright{a_2}\ldots x_k\mapright{a_k}x_{k+1}
\]
with $e_i=x_i\mapright{a_i} x_{i+1}$. The vertices $x_1,x_{k+1}$ are the initial and terminal vertices of $p$, respectively, and if we do not want to specify them we denote them as $\iota(p),\tau(p)$. For any $x_i, x_j$ with $i<j$, we use $p(x_i,x_j)$ to denote  the subwalk of $p$ with endpoints $x_i, x_j$. If $\Gamma$ is deterministic given the initial vertex $x_1$ and a word $w\in A^*$ there exists at most one path $p$ labeled by $w$ and initial vertex $x_1$, which we sometimes denote as $x_1\mapright{w}y$. In case the walk $p$ has the property $\omega(p)=\tau(p)$, $p$ is called a \emph{circuit}. 
When we fix a vertex $x_0$ in $\Gamma$, the pair $(\Gamma, x_0)$ is called a \emph{rooted graph}, or a (pointed) $A$-automaton. In this case, we see $(\Gamma, x_0)$ as a language acceptor by using the distinguished vertex $x_0$ as an initial and final state. In this case, the language accepted by $(\Gamma,x_0)$ is given by the set $L(\Gamma,x_0)$ formed by the labels of all circuits with initial vertex $x_0$. Note that $L(\Gamma,x_0)$ together with the usual operation of concatenation, is also a submonoid of the free monoid $A^*$. More generally, an $A$-automaton has an initial state and a set $F\subseteq V(\Gamma)$ of final states, the language $L(\Gamma, x_0,F)$ is the set formed by the labels of the walks $p$ with $\iota(p)=x_0$, $\tau(p)\in F$. In case $\Gamma$ is finite, $L(\Gamma, x_0,F)$ belongs to the class of regular languages \cite{hop}.
\\
The class of $A$-digraphs forms a category where a morphism $\varphi:\Gamma_1\to \Gamma_2$ from the graph $\Gamma_1=(V_1,E_1,\iota_1, \tau_1, \lambda_1)$ to the graph $\Gamma_2=(V_2,E_2,\iota_2, \tau_2, \lambda_2)$ is a pair of maps $\varphi_e:E_1\to E_2$, $\varphi_v:V_1\to V_2$ that commute with the initial and final maps and with the labeling function:
\[
\forall g\in E_1:\;\; \tau_2(\varphi_e(g))=\varphi_v(\tau_1(g)),\,\,  \iota_2(\varphi_e(g))=\varphi_v(\iota_1(g)),\, \,\lambda_2(\varphi_e(g))=\lambda_1(g)
\]
Given a subgraph $\Lambda$ of $\Gamma_1$ we denote by $\psi(\Lambda)$ the subgraph of $\Gamma_2$ with set of vertices $\varphi_v(V(\Lambda))$ and set of edges $\varphi_e(E(\Lambda))$. An isomorphism (monomorphism) is a morphism $\varphi$ such that $\varphi_v$, $\varphi_e$ are both bijective (injective).
\\
From now on we will mainly consider digraphs on alphabets with an involution which is the main object of this paper. 
\begin{defn}[involutive and inverse graphs]
An \emph{involutive} $\wt{A}$-digraph is a directed graph $\Gamma$ on the set of labels $\wt{A}$ with an involution $\circ ^{-1}:E\to E$ such that
\begin{itemize}
\item  for any $e\in E$, $e^{-1}$ is the opposite edge with $\iota(e^{-1})=\tau(e)$, $\tau(e^{-1})=\iota(e)$;
\item the involution is compatible with the labeling function: $\lambda(e^{-1})=\lambda(e)^{-1}$;
\item no edge is the inverse of itself $e\neq e^{-1}$. 
\end{itemize}
$\Gamma$ is called \emph{inverse} if, in addition, it is deterministic and connected, i.e., any two vertices are connected by a walk in $\Gamma$. 
\end{defn}
If $\Gamma$ is an $A$-digraph, then by adding for each edge $e=x_1\mapright{a}x_2$ in $\Gamma$ a new edge $e^{-1}=x_2\mapright{a^{-1}}x_1$ with label on the set of formal inverse $A^{-1}$, we obtain a new involutive graph on $\wt{A}$ that we denote by $\Gamma^-$. It is obvious that if $\Gamma$ is deterministic and co-deterministic, then $\Gamma^-$ is deterministic.  
\\
The notions that we have previously considered for an $A$-digraph, transfer directly to involutive and inverse graphs by adding the compatibility with the involution on the edges. Thus, in the category of involutive (inverse) graphs a morphism $\psi:\Gamma_1\to \Gamma_2$ of involutive (inverse) graphs is a morphism of $\wt{A}$-digraphs with the property that $\psi(e^{-1})=\psi(e)^{-1}$. In particular, considering the inclusion map, the subgraph of an involutive (inverse) graph is also involutive (inverse). In our context, a \emph{tree} is a connected graph in which every circuit $p=e_1\ldots e_k$ contains a subwalk of the form $ee^{-1}$ for some edge $e$ of $p$. A walk $p$ not containing any subwalk of the form $ee^{-1}$ is called \emph{reduced}. So in a tree there are no reduced walks, except the empty one. Note that the associated simple graph of an involutive graph that is a tree $T$, is a simple graph that is also a tree as a simple graph, i.e., every two vertices are connected by a unique simple path. 
\\
Given a connected involutive graph $\Gamma$, we can equip $\Gamma$ with a structure of a metric space by considering the usual distance given by
\[
d(u,v)=\min\{n: \mbox{ such that }p=e_1\ldots e_n\mbox{ is a walk with }\iota(p)=u, \tau(p)=v\}
\]
A walk $p$ is a geodesic, if $p$ has length exactly $d(\iota(p),\tau(p))$. The (induced) subgraph $D_n(p)$ consisting of vertices whose length from a fixed vertex $p$ is at most $n$ is called the \emph{disk of center $p$ and radius $n$}. For a connected subgraph $Y\subseteq \Gamma$ the diameter of $Y$ is given by $\delta_{\Gamma}(Y)=\max\{d(y_1,y_2): y_1,y_2\in V(Y)\}$.
\\
The following lemma is a direct consequence of the determinism of inverse graphs. 
\begin{lemma}\label{lem: there are reduced walks}
Let $\Gamma$ be an inverse $\wt{A}$-digraph. For any vertex $v\in V(\Gamma)$ and word $u\in\wt{A}^*$ there at most one walk $v\mapright{u}y$ labeled by $u$. Moreover, if there is a walk $x\mapright{u}y$, then there is also the (reduced) walk $x\mapright{\oo{u}}y$.
\end{lemma}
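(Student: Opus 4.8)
The plan is to treat the two assertions separately, both reducing to the determinism of $\Gamma$. For the uniqueness statement I would argue by induction on the length $k$ of $u = a_1\cdots a_k$ with $a_i \in \wt{A}$. Any walk labeled by $u$ and starting at $v$ has the form $v = x_1 \mapright{a_1} x_2 \mapright{a_2} \cdots \mapright{a_k} x_{k+1}$; since $\Gamma$ is deterministic, the edge leaving $x_1 = v$ with label $a_1$ is unique, so $x_2$ is determined, and inductively each $x_{i+1}$ is determined by $x_i$ and $a_i$. Hence the entire walk, and in particular its terminal vertex $y$, is uniquely determined by $v$ and $u$.

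For the second assertion, recall that $\oo{u}$ is obtained from $u$ by finitely many deletions of factors of the form $a\, a^{-1}$ with $a\in\wt{A}$. It therefore suffices to show that a single such deletion preserves the existence of a walk with the same endpoints, and then iterate. So suppose $u = w_1\, a\, a^{-1}\, w_2$ and that there is a walk $x \mapright{w_1} p \mapright{a} q \mapright{a^{-1}} p' \mapright{w_2} y$. Let $e$ be the edge $p \mapright{a} q$ and $f$ the edge $q \mapright{a^{-1}} p'$. The inverse edge $e^{-1}$ satisfies $\iota(e^{-1}) = q$ and $\lambda(e^{-1}) = a^{-1}$, so both $e^{-1}$ and $f$ are edges leaving $q$ with label $a^{-1}$; determinism forces $f = e^{-1}$, whence $p' = \tau(f) = \tau(e^{-1}) = \iota(e) = p$. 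Removing the backtracking $p \mapright{a} q \mapright{a^{-1}} p$ then yields a walk $x \mapright{w_1} p \mapright{w_2} y$ labeled $w_1 w_2$ with the same endpoints.

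Iterating this step along the (finite) free-reduction sequence produces a walk $x \mapright{\oo{u}} y$. Finally I would check that this walk is reduced: since $\oo{u}$ has no factor $b\, b^{-1}$, no two consecutive edges of the walk carry mutually inverse labels, and the same determinism argument shows that two consecutive edges $g, h$ with $\lambda(h) = \lambda(g)^{-1}$ would satisfy $h = g^{-1}$; hence the walk contains no subwalk $g\, g^{-1}$ and is reduced. The only real content is the determinism argument collapsing $p'$ onto $p$, which is what forces the backtracking to return to the very vertex it left; everything else is routine bookkeeping, so I do not anticipate a genuine obstacle.
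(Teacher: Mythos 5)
Your proof is correct, and it is exactly the argument the paper intends: the paper states this lemma without proof, remarking only that it is ``a direct consequence of the determinism of inverse graphs,'' and your write-up (induction for uniqueness, plus the determinism argument forcing a backtracking edge $q\mapright{a^{-1}}p'$ to coincide with $e^{-1}$ so that $p'=p$, iterated along the free reduction) is the standard fleshing-out of precisely that remark. No gaps.
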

To fix the notation we now recall some basic definitions of groups acting on graphs. We assume the reader familiar with groups acting on trees, the Bass-Serre theory, and the fundamental group of a graph of groups, see for instance \cite{Serre, DiDun, Bogo}. We say that a group $G$ acts on an involutive $A$-digraph $\Gamma$ on the left if the left actions of $G$ on the sets $V(\Gamma)$ and $E(\Gamma)$ are defined so that $g\cdot\iota(e) =\iota(g\cdot e)$, $g\cdot \tau(e) = \tau(g\cdot e)$, $\lambda(g\cdot e)=\lambda(e)$ and $g\cdot e^{-1} = (g\cdot e)^{-1}$ for all $g\in G$, $e\in E(\Gamma)$. By the definition and the fact that $A\cap A^{-1}=\emptyset$ we deduce that $G$ acts on $\Gamma$ without inversion of edges ($g\cdot e\neq e^{-1}$ for all $e\in E(\Gamma)$ and $g\in G$). For $x\in V(\Gamma)\cup E(\Gamma)$ we denote by $G_x=\{g\in G:g\cdot x=x\}$ the stabilizer of $x$ under the action of $G$, and we denote by $G\cdot x=\{g\cdot x: g\in G\}$ the orbit of $x$ under the action of $G$. Put $G\setminus V(\Gamma)=\{G\cdot v: v\in V(\Gamma)\}$, $G\setminus E(\Gamma)= \{G\cdot e: e\in E(\Gamma)\}$. By the quotient graph $G\setminus \Gamma$ we mean the involutive graph $(G\setminus V(\Gamma), G\setminus E(\Gamma), \oo{\iota}, \oo{\tau}, \lambda)$ where $\oo{\iota}(G\cdot e)=G\cdot \iota(e)$, $\oo{\tau}(G\cdot e)=G\cdot \tau(e)$, $\lambda(G\cdot e)=\lambda(e)$.
\\
Henceforth, we will mostly consider inverse graphs which is the main subject of the paper. This category of digraphs appears in many areas of mathematics and computer science. For instance, given a group $G$ with generating set $A$, the Cayley graph $\Cay(G;A)$ is the inverse graph with vertex set $G$ and where a generic edge has the form $g\mapright{a}ga$ with $a\in\wt{A}$. A natural generalization is the notion of Schreier graphs. Given a subgroup $H$ of $G$ we consider the inverse graph $\Sch(H,A)$ with vertex set the right cosets $Hg$ of $H$, and edges of the form $Hg\mapright{a}H(ga)$, $a\in \wt{A}$. Via Stallings construction \cite{Stall}, one can associate to a subgroup $H$ of the free group $\mathbb{F}_A$ a geometric object, called the Stallings automaton $\mathcal{S}(H)$ which provides useful information about the subgroup $H$. One way to see the Stallings automaton of $H$ is by considering the core of the rooted Schreier graph $(\Sch(H), H)$, i.e., the (induced) rooted inverse subgraph obtained by taking all the vertices that belong to a reduced circuit $H\mapright{u} H$ with $u$ reduced. Roughly speaking, we are removing from $(\Sch(H), H)$ all the ``hanging trees'' not containing the root $H$. In this way the language $\oo{L(\mathcal{S}(H))}=\{\oo{u}: u\in L(\mathcal{S}(H))\}$ is formed by all the reduced words representing the elements of $H$, so it may be identified with $H$, see for instance \cite[Chapter 23]{Handbook}. Another area where inverse graphs play an important role is in combinatorial inverse semigroup theory. In this context, Sch\"{u}tzenberger automata play a key role, like Cayley graphs for geometric group theory. For more details, we refer the reader to \cite{Steph}.  



\subsection{Basic facts about morphisms of inverse graphs}
The condition of determinism and connectedness of an involutive graph has some consequences when we consider morphisms in the category of inverse graphs. For instance, a morphism $\psi:\Gamma\to\Lambda$ is completely determined by the vertex map $\psi_v$. In particular, $\psi$ is a monomorphism if for any $y\in V(\Lambda)$, the preimage $\psi_v^{-1}(y)$ contains at most one element and checking if two morphisms are equal is just a matter of verifying if they agree on a vertex.
\begin{lemma}\label{lem: uniqueness of automorphism}
Let $\varphi_1, \varphi_2$ be two morphisms from the inverse graph $\Gamma_1$ to the inverse graph $\Gamma_2$, then $\varphi_1=\varphi_2$ if and only if $\varphi_1(x)=\varphi_2(x)$ for some $x\in V(\Gamma)$.
\end{lemma}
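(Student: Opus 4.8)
The plan is to reduce everything to the two defining features of inverse graphs that are in force here: $\Gamma_1$ is connected and $\Gamma_2$ is deterministic. The forward implication is trivial, since equal morphisms agree on every vertex, in particular on any chosen $x$. So I would concentrate on the converse, assuming $\varphi_1(x)=\varphi_2(x)$ for a single vertex $x\in V(\Gamma_1)$ and aiming to show $\varphi_1=\varphi_2$.

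First I would prove that the vertex maps coincide everywhere by propagating the agreement along walks. Fix an arbitrary $v\in V(\Gamma_1)$; since $\Gamma_1$ is connected there is a walk $x=x_1\mapright{a_1}x_2\mapright{a_2}\cdots\mapright{a_k}x_{k+1}=v$, and I would induct on its length $k$. The base case $k=0$ is exactly the hypothesis. For the inductive step, suppose $\varphi_1(x_k)=\varphi_2(x_k)$ and let $e=x_k\mapright{a_k}x_{k+1}$ be the last edge. Applying the two morphisms gives edges $\varphi_1(e)$ and $\varphi_2(e)$ of $\Gamma_2$, both issuing from the common vertex $\varphi_1(x_k)=\varphi_2(x_k)$ and both carrying the label $a_k$ (by compatibility of morphisms with the labeling). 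Determinism of $\Gamma_2$ permits at most one edge with a prescribed initial vertex and label, so $\varphi_1(e)=\varphi_2(e)$, and in particular $\varphi_1(x_{k+1})=\varphi_2(x_{k+1})$. This closes the induction and shows that $\varphi_1$ and $\varphi_2$ have the same vertex map.

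It remains to upgrade equality of vertex maps to equality of morphisms, i.e.\ to identify the edge maps. This is precisely the remark preceding the statement: in a deterministic target every edge $e=u\mapright{a}u'$ of $\Gamma_1$ has image determined by its source and label, so the fact that $\varphi_1(u)=\varphi_2(u)$ together with $\lambda(\varphi_1(e))=\lambda(\varphi_2(e))=a$ yields $\varphi_1(e)=\varphi_2(e)$ by the same determinism argument applied to $\Gamma_2$. Hence $\varphi_1$ and $\varphi_2$ agree on vertices and on edges, so $\varphi_1=\varphi_2$.

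I expect no genuine obstacle here; the only point requiring care is the logical structure of the induction, namely that the single-vertex hypothesis is exactly the base case and that determinism of $\Gamma_2$ is what glues the inductive step together. Note that no use is made of the involution or of compatibility with $\circ^{-1}$, so the statement is really a fact about connected deterministic labeled graphs, specialized to the inverse setting.
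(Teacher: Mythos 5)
Your proof is correct and is essentially the paper's own argument: the paper likewise propagates the agreement at one vertex along arbitrary walks using the determinism of $\Gamma_2$ (stated there via uniqueness of a walk with a given label and starting vertex, which is just your edge-by-edge induction in compressed form), and then recovers equality of the edge maps from determinism. No meaningful difference in approach.
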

\begin{proof}
Any walk $x\mapright{u}y$ is mapped by $\varphi_1, \varphi_2$ into the two walks 
\[
\varphi_1(x)\mapright{u}\varphi_1(y)\mbox{ and }\varphi_2(x)\mapright{u}\varphi_2(y) 
\]
and so by the determinism of $\Gamma_2$ and condition $\varphi_1(x)=\varphi_2(x)$ we deduce $\varphi_1(y)=\varphi_2(y)$. Since $\Gamma_1$ is connected, we have that $\varphi_1, \varphi_2$ agree on $V(\Gamma_1)$, hence by the determinism of $\Gamma_2$, they also agree on $E(\Gamma_1)$.
\end{proof}
The previous result is a particular case of a more general one since morphisms of inverse graphs are immersions of involutive graphs, the result follows from \cite[Proposition 2.1]{Gr-Meak}. 
Another interesting feature of inverse graphs, and in particularly rooted inverse graphs, is the relationship between accepted languages and morphisms. This connection is contained in the following proposition belonging to folklore. 
\begin{prop}\label{prop: inclusion implies homomorphism}
Let $(\Gamma,x_0), (\Lambda, y_0)$ be two rooted inverse graphs, then $L(\Gamma,x_0)\subseteq L (\Lambda, y_0)$ if and only if there is a morphism $\psi: \Gamma\rightarrow  \Lambda$ with $\psi(x_0)=y_0$. Moreover, $L(\Gamma,x_0)= L (\Lambda, y_0)$ if and only if there is an isomorphism $\psi: \Gamma\rightarrow  \Lambda$ 
\end{prop}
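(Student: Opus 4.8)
The plan is to prove both directions of the equivalence, with the nontrivial content concentrated in constructing a morphism out of a language inclusion. For the easy direction, suppose a morphism $\psi:\Gamma\to\Lambda$ with $\psi(x_0)=y_0$ exists. Any circuit $x_0\mapright{w}x_0$ in $\Gamma$ is mapped, since $\psi$ commutes with $\iota,\tau$ and preserves labels, to a circuit $y_0\mapright{w}y_0$ in $\Lambda$; hence $L(\Gamma,x_0)\subseteq L(\Lambda,y_0)$. The same observation applied to an isomorphism (and its inverse) yields both inclusions, giving one half of the ``Moreover'' statement.

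For the substantive direction, assume $L(\Gamma,x_0)\subseteq L(\Lambda,y_0)$ and construct $\psi_v$. Since $\Gamma$ is connected, every $v\in V(\Gamma)$ admits a walk $x_0\mapright{u}v$ for some $u\in\wt{A}^*$. The key device is to read the circuit $x_0\mapright{uu\inv}x_0$, whose label $uu\inv$ lies in $L(\Gamma,x_0)\subseteq L(\Lambda,y_0)$; by determinism of $\Lambda$ this circuit factors through a prefix, producing a (unique) walk $y_0\mapright{u}w$ in $\Lambda$, and I set $\psi_v(v)=w$. To see this is independent of the chosen $u$, take a second walk $x_0\mapright{u'}v$; then $u(u')\inv$ labels a circuit at $x_0$, so it labels one at $y_0$, and determinism of $\Lambda$ forces $y_0\mapright{u}w$ and $y_0\mapright{u'}w$ to terminate at the same vertex $w$. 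Thus $\psi_v$ is well defined, and evidently $\psi_v(x_0)=y_0$ via the empty walk. \emph{This well-definedness-plus-existence step is the main obstacle}: it is exactly here that the language inclusion and the determinism of $\Lambda$ are used in tandem, and everything else is bookkeeping.

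It remains to promote $\psi_v$ to a graph morphism and settle uniqueness and the isomorphism claim. Given an edge $e=v_1\mapright{a}v_2$ in $\Gamma$, choose $x_0\mapright{u}v_1$; then $x_0\mapright{ua}v_2$ is a walk, so the previous construction supplies walks $y_0\mapright{u}\psi_v(v_1)$ and $y_0\mapright{ua}\psi_v(v_2)$ in $\Lambda$, which by determinism share the prefix $y_0\mapright{u}\psi_v(v_1)$ and thus exhibit an edge $\psi_v(v_1)\mapright{a}\psi_v(v_2)$; I define $\psi_e(e)$ to be this edge. Compatibility with $\iota,\tau,\lambda$ is immediate from the construction, and compatibility with the involution, $\psi_e(e\inv)=\psi_e(e)\inv$, follows from determinism of $\Lambda$ together with the involutive structure. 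Hence $\psi$ is a morphism of inverse graphs with $\psi(x_0)=y_0$, and its uniqueness is guaranteed by Lemma~\ref{lem: uniqueness of automorphism}. Finally, if $L(\Gamma,x_0)=L(\Lambda,y_0)$, applying the constructed-morphism direction both ways yields $\psi:\Gamma\to\Lambda$ and $\varphi:\Lambda\to\Gamma$ with $\psi(x_0)=y_0$, $\varphi(y_0)=x_0$. The composite $\varphi\circ\psi$ fixes $x_0$, so by Lemma~\ref{lem: uniqueness of automorphism} it equals the identity of $\Gamma$, and symmetrically $\psi\circ\varphi=\mathrm{id}_\Lambda$; therefore $\psi$ is an isomorphism, completing the proof.
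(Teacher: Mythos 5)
Your proposal is correct and follows essentially the same route as the paper's proof: the $uu^{-1}$ trick combined with determinism of $\Lambda$ to define and well-define the vertex map, followed by an appeal to Lemma~\ref{lem: uniqueness of automorphism} to upgrade the two morphisms to mutually inverse isomorphisms in the equality case. Your explicit verification of the edge map is slightly more detailed than the paper's ``it immediately follows,'' but the underlying argument is identical.
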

\begin{proof}
Suppose we have the inclusion $L(\Gamma,x_0)\subseteq L (\Lambda, y_0)$. Take a vertex $v\in V(\Gamma)$, since $\Gamma$ is connected there is a walk $x_0\mapright{u}v$ in $\Gamma$ for some word $u\in\wt{A}^*$. Now, by the inclusion of the two languages, we have that $uu^{-1}\in L(\Lambda, y_0)$, hence there is a walk $p$ in $\Lambda$ starting at $y_0$ labeled by $uu^{-1}$. By the determinism of $\Lambda$ we concluide that $p=y_0\mapright{u}z\mapright{u^{-1}}y_0$ is actually a circuit. Define the map $\psi:\Gamma\to\Lambda$ by putting $\psi(v)=z$. It is a well define map since if $x_0\mapright{w}v$ is another walk connecting $x_0$ with $v$, then $uw^{-1}\in L(\Gamma, x_0)\subseteq L(\Lambda, y_0)$ which in turns implies that $y_0\mapright{u}z'\mapright{w^{-1}}y_0$ is a circuit in $\Lambda$. Hence, by the determinism of $\Lambda$, we conclude that $z'=z$. By the way $\psi$ is defined, it immediately follows that $\psi$ is a morphism of inverse graphs with $\psi(x_0)=y_0$. Conversely, any morphism $\psi:\Gamma\to\Lambda$ with $\psi(x_0)=y_0$ implies that any word $w$ labeling a circuit $p$ centered at $x_0$, labels also the circuit $\psi(p)$ centered at $y_0$. 
\\
The equality $L(\Gamma,x_0)= L (\Lambda, y_0)$ implies that there are two morphisms $\psi: \Gamma\to\Lambda$, $\varphi:\Lambda\to \Gamma$ with $\psi(x_0)=y_0$, $\varphi(y_0)=x_0$. Since the composition $\varphi\circ \psi: \Gamma\to \Gamma$ is a morphism with $\varphi\circ \psi(x_0)=x_0$ by Lemma~\ref{lem: uniqueness of automorphism} we conclude that $\varphi\circ \psi$ is the identity morphism on $\Gamma$. Similarly,  $ \psi\circ \varphi$ is the identity morphism on $\Lambda$.
\end{proof}
Note that by the previous proposition we have that, up to isomorphism, there is a unique rooted inverse graph accepting the language $L(\Gamma,x_0)$. 
The set of automorphisms of the inverse graph $\Gamma$ will be denoted by $\Aut(\Gamma)$. This is a group acting on $\Gamma$ in a canonical way. In case the quotient graph $\Aut(\Gamma)\setminus\Gamma$ is finite, we say that $\Gamma$ is quasi-transitive. Note that this last condition is equivalent to the finiteness of $V(\Gamma)\setminus \Aut(\Gamma)$.


\subsection{Coverings of involutive and inverse graphs}\label{sec: coverings of inverse graphs}
In this section, we recall some basic topological facts of involutive graphs, and we derive some basic results for inverse graphs that will be used later in the proof of our characterizations.
\\
 Let $\Gamma$ be an involutive graph, for any $v\in V(\Gamma)$, let $\Star(\Gamma,v)=\{e\in E(\Gamma): \iota(e)=v\}$ be the star set of $\Gamma$ at $v$. Any morphism $\varphi:\Gamma\to\Gamma'$ between two involutive graphs induces a map $\varphi:\Star(\Gamma,v)\to \Star(\Gamma',\varphi(v)) $ between star sets in the obvious ways. Following Stallings \cite{Stall} we say that a graph morphism $\varphi$ is a \emph{cover} (\emph{immersion}) if the restrictions $\varphi:\Star(\Gamma,v)\to \Star(\Gamma',\varphi(v)) $ are bijections (injections) for every $v\in V(\Gamma)$. In his paper \cite{Stall}, Stallings made use of immersions between finite graphs to study finitely generated subgroups of free groups. Subsequently, Margolis and Meakin \cite{MaMe} showed how the theory of inverse monoids may be used to classify immersions between connected graphs, see \cite{NoraMe} for further extensions to higher dimensional cell complexes. By Lemma~\ref{lem: there are reduced walks} every morphism between two (complete) inverse $\wt{A}$-digraphs is a (covering) immersion. 
One central feature of coverings is the possibility of lifting walks, this is contained in the following characterization. 
\begin{prop}\cite[Proposition 2.3]{Gr-Meak}\label{prop: lifting of coverings}
Let $\varphi:\Gamma'\to\Gamma$ be an immersion between involutive connected graphs. Then, $\varphi$ is a cover if and only if, for each vertex $v\in V(\Gamma)$ and vertex $v'\in \varphi^{-1}(v)$, every walk $p$ in $\Gamma$ with $\iota(p)=v$ lifts to a unique walk $p'$ starting at $v'$, i.e., $\varphi(p')=p$. 
\end{prop}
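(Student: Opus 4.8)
The plan is to prove both implications by reducing everything to the single-edge case, exploiting the fact that the star maps $\varphi:\Star(\Gamma',v')\to\Star(\Gamma,\varphi(v'))$ are \emph{already} injective because $\varphi$ is an immersion. Thus only surjectivity of these restricted maps distinguishes a cover from a general immersion, and this is exactly the ``local'' content that the ``global'' walk-lifting property should encode.

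For the direction asserting that a cover admits walk liftings, I would argue by induction on the length $k$ of the walk $p=e_1\cdots e_k$ in $\Gamma$ with $\iota(p)=v$. The empty walk lifts uniquely to the empty walk at $v'$. For the inductive step, suppose $e_1\cdots e_{k-1}$ has a unique lift $e_1'\cdots e_{k-1}'$ starting at $v'$ and ending at some $w'$ with $\varphi(w')=\iota(e_k)$. Since $e_k\in\Star(\Gamma,\varphi(w'))$ and the restricted map $\varphi:\Star(\Gamma',w')\to\Star(\Gamma,\varphi(w'))$ is a bijection by the covering hypothesis, there is exactly one edge $e_k'\in\Star(\Gamma',w')$ with $\varphi(e_k')=e_k$; appending it yields the desired lift. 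The compatibility $\varphi(e^{-1})=\varphi(e)^{-1}$ guarantees that edges traversed in either orientation lift correctly, so inverse edges require no separate treatment.

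For the converse, I would simply instantiate the lifting property on walks of length one. Fix $v'\in V(\Gamma')$ and set $v=\varphi(v')$. Given any $e\in\Star(\Gamma,v)$, the single-edge walk $p=e$ starts at $v$, hence by hypothesis lifts uniquely to a walk starting at $v'$; this lift must be a single edge $e'\in\Star(\Gamma',v')$ with $\varphi(e')=e$. This shows $\varphi:\Star(\Gamma',v')\to\Star(\Gamma,v)$ is surjective, and combined with the injectivity furnished by the immersion hypothesis it is a bijection, so $\varphi$ is a cover.

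I expect no serious obstacle, since the statement is the graph-theoretic shadow of the classical path-lifting characterization of covering maps, and the determinism and involution structure only streamline the bookkeeping. The one point demanding care is making the uniqueness claim in the induction airtight: one must observe that any lift of $p$ starting at $v'$ restricts to a lift of the prefix $e_1\cdots e_{k-1}$, so that the inductive uniqueness forces agreement on the prefix and injectivity of the star map then pins down the final edge. This is immediate once stated, but it is the only step where injectivity and surjectivity of the star maps are genuinely used in tandem.
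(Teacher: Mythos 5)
Your proof is correct. Note that the paper itself gives no proof of this proposition---it is imported verbatim with a citation to \cite{Gr-Meak}---and your argument (induction on walk length, using bijectivity of the star maps for existence and injectivity for uniqueness in the forward direction; length-one walks for the converse) is exactly the standard path-lifting argument behind that cited result, so it fills the gap completely and nothing needs to be added.
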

An immediate consequence of the previous proposition is that covers of inverse graphs are surjective. We now briefly summarize some basic facts linking group theory and covers of graphs. Following \cite{Stall} two walks $p, q$ of a connected involutive graph $\Gamma$ are said to be homotopic equivalent (written $p\sim q$) if we may pass from $p$ to $q$ by a finite sequence of insertions or deletions of walks of the form $ee^{-1}$ for some edges of $e\in E(\Gamma)$; clearly $\iota(p)=\iota(q)$, $\tau(p)=\tau(q)$. Let $[p]$ denote the equivalence class (homotopy class) of the walk $p$ in $\Gamma$. Fix a vertex $v\in V(\Gamma)$, the set $\pi_1(\Gamma,v)=\{[p]: \iota(p)=\tau(p)=v\}$ together with the operation defined by $[p][q]=[pq]$ is a group, called the \emph{fundamental group of $\Gamma$ based at $v$}. If $\Gamma$ is an inverse $\wt{A}$-digraph, then for any $u\in\wt{A}^*$, there is at most one walk starting at $v$ with $u$ as label. Thus, it is routine to check that the map $\omega: \pi_1(\Gamma, v)\to \mathbb{F}_A$ sending $[p]$ to the element $\oo{\lambda(p)}\in \mathbb{F}_A$ is a well-defined map which is a monomorphism. With a slight abuse of notation we will denote by $\pi_1(\Gamma, v)$ the isomorphic image $\omega\left(\pi_1(\Gamma, v)\right)$. In this way $\pi_1(\Gamma, v)$ may be identified with the set $\oo{L(\Gamma, x_0)}$ of reduced words labeling a circuit at $x_0$. It is easily seen that the map $\theta: L(\Gamma, x_0)\to \pi_1(\Gamma,x_0)$ defined by $\theta(u)=\oo{u}$ is a (surjective) homomorphism of monoids. It is a well-known fact that the fundamental group of every connected involutive graph is free. 
By Proposition 2.3 of \cite[Chapter 23]{Handbook} and Proposition~\ref{prop: inclusion implies homomorphism}, the Stalling automaton of $\pi_1(\Gamma, v)$ is isomorphic to the \emph{core} of $(\Gamma, v)$, i.e., the (induced) rooted inverse subgraph obtained from $(\Gamma, v)$ by taking all the vertices belonging to a reduced circuit based at $v$. This operation is equivalent to removing all the ``hanging trees'' not containing $v$ from $(\Gamma, v)$. 
\\
The notion of determinizing $V$-quotient, for short $DV$-quotient, of an inverse $\wt{A}$-digraph $\Gamma=(V, E, \iota,\tau, \lambda)$ plays an important role in the proof of the characterization contained in Theorem~\ref{theo: main theorem}. This notion appears quite often in combinatorial group and inverse semigroup theory, for instance in the folding operation involved in the construction of a Stallings automaton, and the more general setting of Sch\"{u}tzenberger automata, see \cite{Steph}. Henceforth, we consider a very specific kind of $DV$-quotiens, the ones generated by a subset of vertices.
\begin{defn}[$DV$-quotients generated by a subset of vertices]
Let $W\subseteq V(\Gamma)$ be a subset of vertices and define the relations $\rho_W$ on $V(\Gamma)$ by $x\,\rho_W\, y$ if there are walks $w_1\mapright{u}x$, $w_2\mapright{u}y$, for some $u\in \wt{A}^*$, $w_1, w_2\in W$. This is clearly an equivalence relation on $V(\Gamma)$ that is compatible with $\Gamma$ in the following sense: if $x_1\mapright{v} x_2$, $y_1\mapright{v} y_2$, $v\in\wt{A}^*$, are walks in $\Gamma$ with $x_1\,\rho_W\,y_1$, then $x_2\,\rho_W\, y_2$. The $DV$-quotient $\Gamma/\rho_W$ is the $\wt{A}$-digraph with vertices the equivalence classes $V(\Gamma)/\rho_W$ and edges $[x]_{\rho_W}\mapright{a}[y]_{\rho_W}$ whenever $x_1\mapright{a}y_1$ is and edge in $\Gamma$ with $x_1\,\rho_W\, x$, $y_1\,\rho_W\,y$. We say that $W\subseteq V(\Gamma)$ has \emph{finite index} if the quotient graph $\Gamma/\rho_W$ is finite.
\end{defn}
The compatibility of $\rho_W$ ensures that the determinism of $\Gamma$ implies the determinism of $\Gamma/\rho_W$, hence 
$\Gamma/\rho_W$ is also an inverse graph. Let $\pi:\Gamma\to \Gamma/\rho_W$ be the morphism of inverse graphs defined by $\pi_v(x)=[x]_{\rho_W}$, $\pi_e(x\mapright{a}y)=[x]_{\rho_W}\mapright{a}[y]_{\rho_W}$. We have already remarked that the morphism $\pi:\Gamma\to\Gamma/\rho_{\rho_W}$ is an immersion. However, it is a cover if one restricts to subsets of vertices of an $\Aut(\Gamma)$-orbit.
\begin{lemma}\label{lem: it is a covering}
Let $H\le\Aut(\Gamma) $ be a subgroup of $\Aut(\Gamma)$. If $W\subseteq H\cdot x_0$, then, the morphism $\pi:\Gamma\to \Gamma/\rho_W$ is a cover of $ \Gamma/\rho_W$. Moreover, if $H\cdot x_0= W$, then for any $x\in V(\Gamma)$ we have $[x]_{\rho_W}=H\cdot x$.
\end{lemma}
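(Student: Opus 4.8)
The plan is to verify the definition of a cover directly. Since $\pi$ has already been observed to be an immersion, each restricted star map $\pi\colon \Star(\Gamma, v)\to \Star(\Gamma/\rho_W, [v]_{\rho_W})$ is injective, so it remains only to prove that these maps are surjective for every $v\in V(\Gamma)$. The engine of both parts of the statement is identical: one uses an automorphism in $H$ to transport a walk emanating from one point of $W$ onto a walk emanating from another, and then appeals to the determinism of $\Gamma$ (Lemma~\ref{lem: there are reduced walks}) to pin down the image of each vertex. The hypothesis $W\subseteq H\cdot x_0$ is precisely what supplies the needed automorphisms.

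For surjectivity I would fix $v$ and an edge $[v]_{\rho_W}\mapright{a}[y]_{\rho_W}$ of $\Gamma/\rho_W$. By the construction of the quotient this edge arises from an edge $x_1\mapright{a}y_1$ of $\Gamma$ with $x_1\,\rho_W\,v$ and $y_1\,\rho_W\,y$. Unfolding $x_1\,\rho_W\,v$ yields walks $w_1\mapright{p}x_1$ and $w_2\mapright{p}v$ with $w_1,w_2\in W\subseteq H\cdot x_0$; writing $w_i=h_i\cdot x_0$ I would set $g=h_2h_1^{-1}\in H$, so that $g\cdot w_1=w_2$. Applying the automorphism $g$ to $w_1\mapright{p}x_1$ produces $w_2\mapright{p}g\cdot x_1$, and determinism forces $g\cdot x_1=v$. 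Then $g$ carries $x_1\mapright{a}y_1$ to an edge $v\mapright{a}g\cdot y_1$ of $\Star(\Gamma,v)$. To check this lift projects correctly, I would apply $g$ to the extended walk $w_1\mapright{pa}y_1$, obtaining $w_2\mapright{pa}g\cdot y_1$; the pair of walks $w_1\mapright{pa}y_1$, $w_2\mapright{pa}g\cdot y_1$ witnesses $y_1\,\rho_W\,g\cdot y_1$, whence $[g\cdot y_1]_{\rho_W}=[y_1]_{\rho_W}=[y]_{\rho_W}$. Thus $v\mapright{a}g\cdot y_1$ is the required preimage and $\pi$ is a cover.

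For the second assertion, assuming $W=H\cdot x_0$ I would establish $[x]_{\rho_W}=H\cdot x$ by two inclusions. For $H\cdot x\subseteq [x]_{\rho_W}$, I pick $h\in H$ and, using connectedness of $\Gamma$, a walk $x_0\mapright{u}x$; applying $h$ gives $h\cdot x_0\mapright{u}h\cdot x$, and since both $x_0$ and $h\cdot x_0$ lie in $W$ this witnesses $x\,\rho_W\,h\cdot x$. Conversely, if $y\,\rho_W\,x$ there are walks $w_1\mapright{u}x$, $w_2\mapright{u}y$ with $w_i=h_i\cdot x_0\in W$; taking $g=h_2h_1^{-1}$ as before, $g$ sends $w_1\mapright{u}x$ to $w_2\mapright{u}g\cdot x$, so determinism gives $g\cdot x=y$ and hence $y\in H\cdot x$.

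The computations themselves are light; the only point that demands care—and the place I would be most vigilant—is the bookkeeping linking the purely combinatorial relation $\rho_W$ to the group action. One must ensure that the single automorphism $g$ chosen to match the two basepoints in $W$ simultaneously respects the walk labels (so that determinism may be invoked at the correct vertex) and keeps the transported terminal vertex inside the right $\rho_W$-class. Both properties rest on $W\subseteq H\cdot x_0$, which guarantees the existence of such a $g$; everything else reduces to determinism together with the fact that automorphisms preserve labels.
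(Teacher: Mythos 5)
Your proof is correct and follows essentially the same route as the paper: both arguments hinge on showing that $\rho_W$-related vertices lie in a common $H$-orbit (via an automorphism matching the two basepoints in $W\subseteq H\cdot x_0$) and then invoke determinism of $\Gamma$, with the second part handled identically. The only cosmetic difference is that you establish surjectivity of the star maps by explicitly lifting each edge and checking its projection, whereas the paper deduces the bijection $\Star(x,\Gamma)\to\Star([x]_{\rho_W},\Gamma/\rho_W)$ from the equality of the (finite) cardinalities of the two star sets.
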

\begin{proof}
First note that if $x \,\rho_W \,y$ then, there is an automorphism $\varphi\in H$ with $\varphi(x)=y$. Indeed, by definition there are paths $w_1\mapright{u}x$, $w_2\mapright{u}y$ with $w_1,w_2\in W$, and since $W\subseteq  H\cdot x_0$, there is an automorphism $\varphi\in H$ with $\varphi(w_1)=w_2$ which in turn implies $\varphi(x)=y$. In particular, we have shown that $[x]_{\rho_W}\subseteq H\cdot x$. Thus, the fact that two $\rho_W$-related elements are in the same $H$-orbit, implies that $|\Star(x,\Gamma)|=|\Star([x]_{\rho_W},\Gamma/\rho_W)|$,  from which we conclude that $\pi: \Star(x,\Gamma)\to \Star([x]_{\rho_W},\Gamma/\rho_W)$ is a bijection. 
\\
For the second statement, we have to show the other inclusion $H\cdot x\subseteq [x]_{\rho_W}$. Let $y\in H\cdot x$ then there is an automorphism $\varphi\in H$ with $y=\varphi(x)$ and so if $x_0\mapright{v} x$ is a walk connecting $x_0$ to $x$, this walk is sent into the walk $\varphi(x_0)\mapright{v}\varphi(x)=y$, with $\varphi(x_0)\in W$, i.e., $x\,\rho_W\, y$. 
\end{proof}
From this last Lemma~\ref{lem: it is a covering} and Proposition~\ref{prop: lifting of coverings} we immediately derive the following fact.  
\begin{lemma}\label{lem: lifting}
Let $W=H\cdot x_0$, for some $x_0\in V(\Gamma)$ and $H\le \Aut(\Gamma)$. Then, any circuit $[w]_{\rho_W}\mapright{v} [w]_{\rho_W}$ in $\Gamma/\rho_W$ with $w\in W$, lifts uniquely to a walk $w\mapright{v} w_1$ in $\Gamma$ for some $w_1\in W$.
\end{lemma}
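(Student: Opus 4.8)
The plan is to apply Proposition~\ref{prop: lifting of coverings} directly to the projection $\pi\colon \Gamma\to \Gamma/\rho_W$, having first recorded that $\pi$ is a cover. First I would invoke Lemma~\ref{lem: it is a covering}: since $W=H\cdot x_0$ is exactly an $H$-orbit (in particular $W\subseteq H\cdot x_0$), the morphism $\pi\colon \Gamma\to \Gamma/\rho_W$ is a cover of inverse graphs, and moreover $[x]_{\rho_W}=H\cdot x$ for every $x\in V(\Gamma)$. These two facts are all the structural input the argument needs.

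Next, I would regard the given circuit $[w]_{\rho_W}\mapright{v}[w]_{\rho_W}$ as a walk $p$ in $\Gamma/\rho_W$ with $\iota(p)=[w]_{\rho_W}$, and note that $w\in \pi^{-1}([w]_{\rho_W})$ since $\pi(w)=[w]_{\rho_W}$ by definition of the projection. Applying Proposition~\ref{prop: lifting of coverings} to the cover $\pi$, with the vertex $[w]_{\rho_W}$ and its preimage $w$, the walk $p$ lifts to a unique walk $p'$ in $\Gamma$ starting at $w$ with $\pi(p')=p$. Because $\pi$ preserves labels, $p'$ is labeled by $v$, i.e. $p'=w\mapright{v}w_1$; this yields both the existence and the uniqueness asserted in the statement.

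Finally, the only point left to check is that the terminal vertex $w_1$ lands back in $W$. Since $p$ is a circuit its endpoint is $[w]_{\rho_W}$, so $\pi(w_1)=[w]_{\rho_W}$, that is $w_1\,\rho_W\,w$, equivalently $w_1\in [w]_{\rho_W}$. By the ``moreover'' clause of Lemma~\ref{lem: it is a covering} we have $[w]_{\rho_W}=H\cdot w$, and since $w\in W=H\cdot x_0$ it follows that $H\cdot w=H\cdot x_0=W$; hence $w_1\in [w]_{\rho_W}=W$, as required.

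I expect no serious obstacle: the statement is an immediate corollary of the covering property from Lemma~\ref{lem: it is a covering} together with the lifting criterion of Proposition~\ref{prop: lifting of coverings}, which is precisely why the excerpt advertises it as something one ``immediately derives.'' The only step demanding an actual line of reasoning, rather than a bare citation, is confirming that the lifted endpoint $w_1$ stays inside the orbit $W$, and this rests entirely on identifying the $\rho_W$-class of $w$ with its $H$-orbit.
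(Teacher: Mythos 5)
Your proposal is correct and follows exactly the route the paper intends: the paper states Lemma~\ref{lem: lifting} as an immediate consequence of Lemma~\ref{lem: it is a covering} (giving that $\pi$ is a cover) and Proposition~\ref{prop: lifting of coverings} (giving unique lifting of walks), which is precisely your argument. Your additional verification that the endpoint $w_1$ lies in $W$, via the identification $[w]_{\rho_W}=H\cdot w=H\cdot x_0=W$, is the one detail the paper leaves implicit, and you handle it correctly.
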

We have the following lemma.
\begin{lemma}\label{lem: finite index}
Let $W=H\cdot x_0$, for some $x_0\in V(\Gamma)$ and $H\le \Aut(\Gamma)$. Then, $H\le \Aut(\Gamma)$ is a finite index subgroups if and only if $\Gamma/\rho_W$ is finite. 
\end{lemma}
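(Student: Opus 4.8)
The plan is to reduce the finiteness of $\Gamma/\rho_W$ to a counting statement about orbits, and then to compare the $H$-orbits on $V(\Gamma)$ with the orbits of the full group $G:=\Aut(\Gamma)$. The structural fact that makes everything work is that $G$ acts \emph{freely} on $V(\Gamma)$: by Lemma~\ref{lem: uniqueness of automorphism} two automorphisms that agree on one vertex are equal, so any $g\in G$ fixing a vertex must coincide with the identity. Thus every vertex stabilizer in $G$ is trivial, and the same holds for the restricted action of $H$.

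Next I would identify the vertices of the quotient. Since $W=H\cdot x_0$, the second part of Lemma~\ref{lem: it is a covering} gives $[x]_{\rho_W}=H\cdot x$ for every $x\in V(\Gamma)$. Hence the vertex set of $\Gamma/\rho_W$ is exactly the set $H\backslash V(\Gamma)$ of $H$-orbits, and $\Gamma/\rho_W$ is finite if and only if $H$ has only finitely many orbits on $V(\Gamma)$. This turns the statement into the assertion that $H$ has finite index in $G$ if and only if $H$ has finitely many orbits on $V(\Gamma)$.

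The heart of the argument is the orbit count on a single $G$-orbit. Fix a $G$-orbit $O=G\cdot v$; since $G$ acts freely and transitively on $O$, the map $g\mapsto g\cdot v$ is a $G$-equivariant bijection $G\to O$. As $H\le G$, the orbit $O$ is $H$-invariant, and $g\cdot v,\,g'\cdot v$ lie in the same $H$-orbit precisely when $Hg=Hg'$; thus the $H$-orbits inside $O$ are in bijection with the right cosets $H\backslash G$, so $O$ splits into exactly $[G:H]$ distinct $H$-orbits. Summing over all $G$-orbits yields
\[
|H\backslash V(\Gamma)|=|G\backslash V(\Gamma)|\cdot[G:H].
\]

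From this identity both implications follow. For $(\Leftarrow)$ no extra hypothesis is needed: if $\Gamma/\rho_W$ is finite then $H\backslash V(\Gamma)$ is finite, and since the single $G$-orbit $G\cdot x_0$ already contributes $[G:H]$ distinct $H$-orbits we get $[G:H]\le |H\backslash V(\Gamma)|<\infty$. For $(\Rightarrow)$, assuming $[G:H]<\infty$, the displayed formula shows that $|H\backslash V(\Gamma)|$ is finite provided $|G\backslash V(\Gamma)|$ is, i.e.\ provided $\Gamma$ is quasi-transitive; this hypothesis is genuinely needed here, as the case $H=G$ of index one shows that $\Gamma/\rho_W$ can be infinite when $G$ has infinitely many orbits. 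The only delicate points are establishing freeness of the action, which yields the clean bijection between the $H$-orbits inside one $G$-orbit and the right cosets $H\backslash G$, and recognizing that quasi-transitivity is exactly what controls the forward direction.
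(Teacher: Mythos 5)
Your proof is correct, and its engine is the same as the paper's: freeness of the action on vertices (Lemma~\ref{lem: uniqueness of automorphism}), the identification $[x]_{\rho_W}=H\cdot x$ from Lemma~\ref{lem: it is a covering}, and the resulting bijection between right cosets of $H$ and the $H$-orbits inside a single $\Aut(\Gamma)$-orbit. Where you differ is in completeness, and the difference is to your credit. The paper's proof sets up this bijection only for the orbit $\Aut(\Gamma)\cdot x_0$ and then declares the two finiteness conditions equivalent; that settles the implication ($\Gamma/\rho_W$ finite $\Rightarrow$ $[\Aut(\Gamma):H]$ finite), but for the converse it tacitly ignores the vertices outside $\Aut(\Gamma)\cdot x_0$, whose $\rho_W$-classes are also vertices of $\Gamma/\rho_W$. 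Your global count $|H\backslash V(\Gamma)|=|\Aut(\Gamma)\backslash V(\Gamma)|\cdot[\Aut(\Gamma):H]$ makes the missing ingredient explicit: the forward implication needs $\Aut(\Gamma)\backslash V(\Gamma)$ to be finite, i.e.\ quasi-transitivity, and your observation that it can fail otherwise is borne out by the paper's own example in Fig.~\ref{fig: bicyclic} (an infinite tree with trivial automorphism group: there $H=\Aut(\Gamma)$ has index one, yet $\Gamma/\rho_W\cong\Gamma$ is infinite). So the lemma as stated should carry a quasi-transitivity hypothesis on $\Gamma$; no harm propagates in the paper, since the only place the forward direction is used is Theorem~\ref{theo: main theorem}, where $\Gamma$ is quasi-transitive by assumption.
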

\begin{proof}
By Lemma~\ref{lem: uniqueness of automorphism} we have $(H\varphi)\cdot x_0\cap (H\psi)\cdot x_0\neq\emptyset$ if and only if $\varphi\psi^{-1}\in H$. Hence, by Lemma~\ref{lem: it is a covering} we have $[\varphi(x_0)]_{\rho_W}= [\psi(x_0)]_{\rho_W}$ if and only if $ H\varphi=H\psi$. Thus, different right cosets give rise to different $\rho_W$-classes, and these are finitely many if and only if there are finitely many right cosets. 
\end{proof}
\begin{prop}\label{prop: language inverse homomorphism}
Let $W=H\cdot x_0$, for some $x_0\in V(\Gamma)$ and $H\le \Aut(\Gamma)$. Then, there is a surjective homomorphism of monoids:
\[
\eta:L( \Gamma/\rho_W,[x_0]_{\rho_W})\to H
\]
such that $L(\Gamma, x_0)=\eta^{-1}(\mathds{1}_H)$, where $\mathds{1}_H$ is the identity of $H$. Furthermore, there is a surjective homomorphism 
\[
\oo{\eta}: \pi_1(\Gamma/\rho_W, [x_0]_{\rho_W})\to H
\]
with $\oo{\eta}^{-1}(\mathds{1}_H)=\oo{L(\Gamma,x_0)}$.
\end{prop}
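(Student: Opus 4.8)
The plan is to build $\eta$ by lifting circuits of the quotient back to walks of $\Gamma$ and reading off the unique automorphism that realizes the resulting displacement of the root. The crucial preliminary observation is that $H$ acts \emph{freely} on $V(\Gamma)$: by Lemma~\ref{lem: uniqueness of automorphism} any two automorphisms agreeing at a single vertex coincide, so for every $w\in W=H\cdot x_0$ there is a \emph{unique} $\sigma(w)\in H$ with $\sigma(w)(x_0)=w$, and $\sigma\colon W\to H$ is a bijection (the inverse of the orbit map). Now, given $v\in L(\Gamma/\rho_W,[x_0]_{\rho_W})$, Lemma~\ref{lem: it is a covering} gives $[x_0]_{\rho_W}=H\cdot x_0=W$, so $v$ labels a circuit at $[x_0]_{\rho_W}$ and, by Lemma~\ref{lem: lifting}, lifts to a unique walk $x_0\mapright{v}w_v$ in $\Gamma$ with $w_v\in W$. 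I would then set $\eta(v)=\sigma(w_v)$, which is well defined precisely because both the lift and the automorphism realizing $w_v$ are unique.

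For the homomorphism property, take $v_1,v_2$ with lifts $x_0\mapright{v_1}w_1$ and $x_0\mapright{v_2}w_2$, and put $\varphi_i=\sigma(w_i)$. Applying the automorphism $\varphi_1$ to the walk $x_0\mapright{v_2}w_2$ yields $w_1\mapright{v_2}\varphi_1(w_2)$; concatenating with $x_0\mapright{v_1}w_1$ and using determinism of $\Gamma$ shows that the lift of $v_1v_2$ ends at $\varphi_1(w_2)=(\varphi_1\varphi_2)(x_0)$, whence $\eta(v_1v_2)=\varphi_1\varphi_2=\eta(v_1)\eta(v_2)$; also $\eta(1)=\sigma(x_0)=\mathds{1}_H$. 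Surjectivity is immediate: given $\varphi\in H$, pick any walk $x_0\mapright{v}\varphi(x_0)$ in the connected graph $\Gamma$, project it by $\pi$ to a circuit at $[x_0]_{\rho_W}$, and observe that by uniqueness of lifts $\eta(v)=\varphi$. Finally $\eta(v)=\mathds{1}_H$ holds exactly when $w_v=x_0$, i.e.\ when the lift $x_0\mapright{v}w_v$ is already a circuit of $\Gamma$; projecting circuits of $\Gamma$ down and lifting them back uniquely shows this is equivalent to $v\in L(\Gamma,x_0)$, giving $L(\Gamma,x_0)=\eta^{-1}(\mathds{1}_H)$.

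To obtain $\oo{\eta}$ I would show that $\eta$ is invariant under free reduction and hence factors through the canonical epimorphism $\theta\colon L(\Gamma/\rho_W,[x_0]_{\rho_W})\to\pi_1(\Gamma/\rho_W,[x_0]_{\rho_W})$, $\theta(u)=\oo{u}$. By Lemma~\ref{lem: there are reduced walks} applied \emph{in $\Gamma$}, the lift $x_0\mapright{v}w_v$ has a companion reduced walk $x_0\mapright{\oo{v}}w_v$ with the \emph{same} endpoint; projecting it shows $\oo{v}\in L(\Gamma/\rho_W,[x_0]_{\rho_W})$ and, by uniqueness of lifts, that $\eta(\oo{v})=\sigma(w_v)=\eta(v)$. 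Thus $\oo{u}=\oo{v}$ forces $\eta(u)=\eta(v)$, so $\oo{\eta}(\oo{v}):=\eta(v)$ defines a surjective homomorphism with $\oo{\eta}\circ\theta=\eta$. For the kernel, a reduced $r\in\pi_1(\Gamma/\rho_W,[x_0]_{\rho_W})$ lies in $\oo{\eta}^{-1}(\mathds{1}_H)$ iff $r\in L(\Gamma,x_0)$; since the reduced words of $L(\Gamma,x_0)$ are exactly $\oo{L(\Gamma,x_0)}$ (again by Lemma~\ref{lem: there are reduced walks}, as $w\in L(\Gamma,x_0)$ yields $\oo{w}\in L(\Gamma,x_0)$) and every element of $\oo{L(\Gamma,x_0)}$ is a reduced circuit label in the quotient, this yields $\oo{\eta}^{-1}(\mathds{1}_H)=\oo{L(\Gamma,x_0)}$.

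The main obstacle is the bookkeeping in the well-definedness and multiplicativity of $\eta$: one must consistently exploit that lifts are unique (Lemma~\ref{lem: lifting}) while the identification $w\mapsto\sigma(w)$ rests on the free action, and in the product case the correct automorphism surfaces only after transporting the lift of $v_2$ by $\varphi_1$. By contrast, the passage to $\oo{\eta}$ is comparatively routine once the ``reduced companion walk'' principle of Lemma~\ref{lem: there are reduced walks} is invoked at the level of $\Gamma$ rather than of the quotient.
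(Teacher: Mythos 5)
Your proposal is correct and follows essentially the same route as the paper: define $\eta$ by uniquely lifting circuits of $\Gamma/\rho_W$ (Lemma~\ref{lem: lifting}) and reading off the unique automorphism in $H$ carrying $x_0$ to the lift's endpoint (Lemma~\ref{lem: uniqueness of automorphism}), prove multiplicativity by transporting the second lift with the first automorphism and invoking determinism, and obtain $\oo{\eta}$ by factoring $\eta$ through $\theta$ using the reduced companion walk of Lemma~\ref{lem: there are reduced walks} in $\Gamma$. Your explicit treatment of the free action via the bijection $\sigma\colon W\to H$ and of the identity $\eta(\oo{v})=\eta(v)$ only makes explicit what the paper leaves implicit; there is no substantive difference.
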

\begin{proof}
By Lemma~\ref{lem: lifting} every circuit $[x_0]_{\rho_W}\mapright{u}[x_0]_{\rho_W}$ in $\Gamma/\rho_W$ may be uniquely lifted to a walk $x_0\mapright{u}w$ for some $w\in W$. Since $W=H\cdot x_0$, there is a (unique by Lemma~\ref{lem: uniqueness of automorphism}) automorphism $\varphi_u\in H$ sending $x_0$ to $w$. Let us define the morphism $\eta$ by putting $\eta(u)=\varphi_u$. By the uniqueness of the automorphism $\varphi_u$, $\eta$ is a well-defined function. Let us show that it is also a homomorphism. Take two circuits 
\[
[x_0]_{\rho_W}\mapright{u}[x_0]_{\rho_W}, \quad [x_0]_{\rho_W}\mapright{v}[x_0]_{\rho_W} \mbox{ in }\Gamma/\rho_W
\]
and lift them to the walks $x_0\mapright{u}w_1$, $x_0\mapright{v}w_2$ of $\Gamma$ for some $w_1, w_2\in W$ and let $\varphi_u, \varphi_v\in H$ be two automorphisms with $\varphi_u(x_0)=w_1$, $\varphi_v(x_0)=w_2$. Similarly, the composition of the two circuits $[x_0]_{\rho_W}\mapright{uv}[x_0]_{\rho_W}$ lifts uniquely to the walk  $x_0\mapright{uv}w_3$ for some $w_3\in W$ and let $\varphi_{uv}\in H$ be the corresponding automorphism with $\varphi_{uv}(x_0)=w_3$. Now, by applying the automorphism $\varphi_u$ to the walk $x_0\mapright{v} \varphi_v(x_0)$ we get the walk $\varphi_u(x_0)\mapright{v}\varphi_u(\varphi_v(x_0))$. Hence, since $w_1=\varphi_u(x_0)$ in $\Gamma$ there is the walk $x_0\mapright{uv}\varphi_u(\varphi_v(x_0))$. Therefore, by the determinism of $\Gamma$ we deduce 
\[
\varphi_u(\varphi_v(x_0))=w_3=\varphi_{uv}(x_0)
\]
and so by Lemma~\ref{lem: uniqueness of automorphism} $\varphi_u\circ \varphi_v=\varphi_{uv}$. From this last equation we conclude that $\eta$ is a homomorphism $\eta(uv)=\varphi_{uv}=\varphi_u\circ \varphi_v=\eta(u)\circ \eta(v)$. Let us show that $\eta$ is surjective. For any $\varphi\in H$ take any walk $x_0\mapright{u}\varphi(x_0)$, this walk maps onto a walk $[x_0]_{\rho_W}\mapright{u}[\varphi(x_0)]_{\rho_W}$ with $\eta(u)=\varphi$. The set $\eta^{-1}(\mathds{1}_H)$ is formed by the circuits $[x_0]_{\rho_W}\mapright{u}[x_0]_{\rho_W}$  whose lifted walks $x_0\mapright{u}w$ have the property that $w=x_0$, i.e., $u\in L(\Gamma, x_0)$. For the last statement of the proposition put $\oo{\eta}=\eta(\theta^{-1})$ where the map $\theta: L( \Gamma/\rho_W,[x_0]_{\rho_W})\to \pi_1(\Gamma/\rho_W, [x_0]_{\rho_W})$ is the surjective homomorphism defined by $\theta(u)=\oo{u}$. This is a well-defined homomorphism: take any two words $u_1,u_2\in\theta^{-1}(g)$ for some $g\in  \pi_1(\Gamma/\rho_W, [x_0]_{\rho_W})$. By definition of $\eta$ there are two automorphisms $\varphi_1, \varphi_2\in H$ with $\eta(u_1)=\varphi_1$, $\eta(u_2)=\varphi_2$ and two walks $x_0\mapright{u_1}\varphi_1(x_0)$, $x_0\mapright{u_2}\varphi_2(x_0)$. Now, since $\Gamma$ is inverse by Lemma~\ref{lem: there are reduced walks} we have that there are also the walks $x_0\mapright{\oo{u_1}}\varphi_1(x_0)$, $x_0\mapright{\oo{u_2}}\varphi_2(x_0)$. Hence, since $\oo{u_1}=\oo{u_2}=g$, by Lemma~\ref{lem: uniqueness of automorphism} we conclude that $\varphi_1=\varphi_2$, i.e., $\eta(u_1)=\eta(u_2)$. 
\end{proof}


\section{Context-free grammars, pushdown automata and configurations graphs}

There are several ways to define a context-free language, either via the notion of grammar, or using the notion of pushdown automaton. For the sake of completeness we briefly recall the main notions that are used in the paper, and we refer the reader to \cite{hop} for further details. A \emph{context-free grammar} is a tuple $\mathcal{G}=(V,A,P,S)$ where $V$ is a finite set of non-terminal symbols (or variables), $A$ is the finite alphabet of terminal symbols that is disjoint from $V$, $S\in V$ is the starting symbol, and  $P\subseteq V\times (V\cup A)^*$ is a finite set called productions. Usually production are denoted by $X\Rightarrow_{\mathcal{G}} \alpha$, $\alpha\in (V\cup A)^*, X\in V$.  We extend $\Rightarrow_{\mathcal{G}} $ by putting $\alpha X \gamma\Rightarrow_{\mathcal{G}} \alpha\beta\gamma$ whenever $X\Rightarrow_{\mathcal{G}} \beta\in P$, and we denote by $\Rightarrow_{\mathcal{G}} ^*$ the reflexive and transitive closure of $\Rightarrow_{\mathcal{G}} $. For a variable $X\in V$ the language generated by $X$ is the set
\[
L(X)=\{u\in A^*: X\Rightarrow_{\mathcal{G}}^* u\}
\]
the language generated by the grammar $\mathcal{G}$ is $L(S)$, and in this case we say that it is a context-free language. It is a well-known fact that every context-free grammar $\mathcal{G}$ can be replaced by one in Chomsky normal form:
\begin{prop}[Theorems 4.5 and 4.3 \cite{hop}]
Any context-free language $L$ with $\varepsilon\notin L$ is generated by a grammar $\mathcal{C}$ in which all productions are of the form $X\Rightarrow_{\mathcal{C}} YZ$, with $Y,Z$ variables different from the starting symbol, and $X\Rightarrow_{\mathcal{C}} a$, with $a\in A$ terminal symbol. In case, $\varepsilon\in L$ there is just one production involving $\varepsilon$ which is $S\Rightarrow_{\mathcal{C}}\varepsilon$.
\end{prop}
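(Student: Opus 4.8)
Since this is a classical result (cited here as Theorems 4.3 and 4.5 of \cite{hop}), the plan is to sketch the standard chain of language-preserving grammar transformations that turns an arbitrary context-free grammar $\mathcal{G}=(V,A,P,S)$ generating $L$ into a grammar $\mathcal{C}$ in the required normal form, verifying at each stage that the generated language changes only in the controlled way described in the statement. First I would isolate the start symbol: introduce a fresh variable $S_0$, declare it the new start symbol, and copy every $S$-production onto $S_0$ (equivalently add $S_0\Rightarrow S$). This guarantees that the start symbol can be kept off every right-hand side in the final grammar, which is precisely the extra requirement that $Y,Z$ differ from the starting symbol.

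Next I would eliminate $\varepsilon$-productions. One computes the set $N\subseteq V$ of \emph{nullable} variables, i.e.\ those $X$ with $X\Rightarrow^*\varepsilon$, by the obvious fixed-point iteration. Then each production $X\Rightarrow\alpha$ is replaced by the family of productions obtained from $\alpha$ by deleting every subset of its occurrences of nullable variables (discarding the variant that would yield $X\Rightarrow\varepsilon$), after which all original $\varepsilon$-productions are removed. The resulting grammar generates exactly $L\setminus\{\varepsilon\}$; if $\varepsilon\in L$ I would then reintroduce the single production $S_0\Rightarrow\varepsilon$, which, since $S_0$ occurs on no right-hand side, adds $\varepsilon$ and nothing else.

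I would then remove unit productions, those of the form $X\Rightarrow Y$ with $Y\in V$. Determine all unit pairs $(X,Y)$ with $X\Rightarrow^*Y$ using only unit productions, again by a fixed-point computation; for each such pair and each non-unit production $Y\Rightarrow\alpha$ add $X\Rightarrow\alpha$, and finally delete every unit production. At this point every production is either of the form $X\Rightarrow a$ or has a right-hand side of length at least two. To enforce the binary/terminal shape, for each terminal $a$ occurring inside a right-hand side of length $\geq 2$ I introduce a fresh variable $C_a$ with $C_a\Rightarrow a$ and substitute $C_a$ for those occurrences; and for each remaining production $X\Rightarrow B_1B_2\cdots B_k$ with $k\geq 3$ and all $B_i\in V$, I cascade it into binary productions $X\Rightarrow B_1D_1$, $D_1\Rightarrow B_2D_2,\ldots, D_{k-2}\Rightarrow B_{k-1}B_k$ with fresh variables $D_i$. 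Each such substitution is plainly language-preserving.

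The main obstacle is not the mechanics of these transformations but verifying that the $\varepsilon$-elimination step preserves $L\setminus\{\varepsilon\}$: one must show, by induction on derivation length, that every nonempty word derivable in $\mathcal{G}$ remains derivable after the nullable-deletion expansion, and conversely that no new words are introduced, all while maintaining the invariant (established in the first step) that the start symbol never reappears on a right-hand side. The remaining steps are routine once this invariant and the unit-pair computation are in hand.
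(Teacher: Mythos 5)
Your proposal is correct: the paper offers no proof of its own for this proposition (it is stated purely as a citation of Theorems 4.3 and 4.5 of Hopcroft--Ullman), and your chain of transformations --- start-symbol isolation, nullable-variable elimination, unit-production removal, terminal isolation, and binarization, in that order --- is exactly the classical argument in the cited reference, including the essential points that $\varepsilon$-elimination must precede unit removal and that re-adding $S_0\Rightarrow\varepsilon$ is harmless only because $S_0$ occurs on no right-hand side.
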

We call a rule of the form $X\Rightarrow_{\mathcal{C}} YZ$ \emph{non-terminal}, while one of the form $X\Rightarrow_{\mathcal{C}} a$, \emph{terminal}. Henceforth, we assume that all the variable of $\mathcal{G}$ are useful, in the sense that any $X\in V$ appears in some derivation $S\Rightarrow_{\mathcal{G}}^* w $ for some $w\in L(\mathcal{G})$.
\begin{rem}\label{rem: last rules with terminal}
In the production $S\Rightarrow_{\mathcal{C}} w$ of a word $w$ it is not difficult to see that we may obtain the same word $w$ by first applying all the non-terminal rules followed by the terminal ones.
\end{rem}
An alternative way to describe such languages is via the notion of pushdown automaton (PDA). A PDA is a tuple $M=(Q,\Sigma,\Theta,\delta,q_0,\perp,F)$, where $Q$ is the set of states, $\Sigma$ is the alphabet of the input tape, $\Theta$ is the alphabet of the stack, $q_0$ is the initial state, $\perp\in\Theta$ is the initial symbol of the stack, and $F\subseteq Q$ is the set of final states. The transition function $\delta$ is a map from $ Q\times (\Sigma\cup \{1\})\times \Theta$  to subsets of $Q\times \Theta^*$. The interpretation of $\delta(q,a,x)=\{(p_1,\gamma_1),\ldots, (p_n, \gamma_n)\}$ is that when $M$ is at the state $q$ and is reading $a$ in the input and $x$ is the last symbol on the stack, it can nondeterministically choose to enter in state $p_i$ and replace the letter $x$ with the word $\gamma_i\in\Theta^*$ for some $i\in [1,n]$; in case $\gamma_i=1$, we are removing the symbol $x$ from the top of the stack (pop operation), otherwise we are pushing some word. Throughout the paper we will describe $\delta$ by using the notation $(q,a,x) \sststile{M}{}(p_i,\gamma_i)$ to denote that $(p_i,\gamma_i)\in \delta(q,a,x)$. Transitions of the form $(q,1,x) \sststile{M}{}(p_i,\gamma_i)$ are called $1$-moves. The pushdown automaton $M$ is called \emph{deterministic} (DPDA) when for each $(q,a,x)\in  Q\times \Sigma\times \Theta$ there is at most one $(p,\gamma)\in Q\times \Theta^*$ such that $(q,a,x) \sststile{M}{}(p,\gamma)$ is a transition, and if there is a $1$-move $(q,1,x) \sststile{M}{}(p,\gamma)$ then $\delta(q,a,x)$ is empty for all $a\in \Sigma$. 
\\
The dynamics of $M$ is described by the \emph{instantaneous description} that defines how the configuration of the machine evolves at each step of the computation. We write $(q,\xi x) \sdtstile{M}{a}(p,\xi \gamma )$ (sometimes $(q,\xi x) \sdtstile{\delta}{a}(p,\xi \gamma )$ when we want to emphasize which transition function we are considering) if we have $(q,a,x)\sststile{M}{}(p,\gamma)$. If there is a sequence 
\[
(q_1,\xi_1) \sdtstile{M}{a_1}(q_2,\xi_2 ) \sdtstile{M}{a_2}\ldots  \sdtstile{M}{a_n}(q_{n+1},\xi_{n+1} )
\]
for some $w=a_1\ldots a_n\in \Sigma^*$, we write $(q_1,\xi_1)\sdtstile{M}{w} (q_{n+1},\xi_{n+1} )$. 
The \emph{language accepted by} $M$ is the set $L(M)=\{w\in \Sigma^*: (q_0,\perp)\sdtstile{M}{w} (p,\xi)\mbox{ with }p\in F \}$. One may define the language accepted by $M$ by requiring that the stack contains only the initial symbol $\perp$, so, in this case, $L_e(M)=\{w\in \Sigma^*: (q_0,\perp)\sdtstile{M}{w} (p, \perp) \}$. It is not difficult to see, by a standard argument similar to \cite[Theorem~5.1 and Theorem~5.2]{hop}, that these two models of computation are equivalent in the sense that these two model of acceptance define the same class of languages. Throughout the paper we will use the first model, although the condition of the stack containing the initial symbol will be used in some later proofs. 
\\
Following \cite{muahu2} we may associate to $M$ a $\Sigma\cup\{1\}$-directed graph $\Gamma(M)$, called the \emph{configurations graph}, that is obtained as follows: the set of vertices $V(\Gamma(M))$ is the subset of $Q\times \Theta^*$ containing $(q_0,\perp)$ and all the pairs $(p,\xi)$ for which there is a computation $(q_0,\perp)\sdtstile{M}{w} (p,\xi )$ for some $w\in \Sigma^*$, and we draw an oriented edge $(q_1,\xi_1)\mapright{a}(q_2,\xi_2)$, $a\in \Sigma\cup\{1\}$ between two vertices of $V(\Gamma(M))$ whenever we have the computation $(q_1,\xi_1) \sdtstile{M}{a}(q_2,\xi_2)$ for some $a\in \Sigma\cup\{1\}$; note that we may have in general edges labeled with the empty word $1$. This graph may be used as an acceptor for the language accepted by $M$ since $L(M)$ is formed by all the words that label a walk connecting the initial configuration $(q_0, \perp)$ to a configuration $(p,\xi)$ which is in a final state $p\in F$. 

\subsection{Inverse configurations graphs, inverse pushdown automata, and inverse-context-free languages}
In what follows, we will concentrate our attention on deterministic pushdown automata that are real-time, so without $1$-moves. Pushdown automata without $1$-moves have configurations graphs without edges labeled with the empty word $1$, so it is the natural framework when considering configurations graphs that are inverse $\Sigma$-directed graphs. It is clear that to have $\Gamma(M)$ that is a deterministic $\Sigma$-directed graph, we need to assume $M$ deterministic. The natural question is when $\Gamma(M)$ is also co-deterministic, since in this case we may transform $\Gamma(M)$ into an inverse graph $\Gamma(M)^-$ by simply adding for every edge $(q,\xi x)\mapright{a}(p,\xi\gamma)$ the reverse one $(p,\xi\gamma) \mapright{a^{-1}}(q,\xi x)$. In particular, it is of interest to see when $\Gamma(M)^-$ is the configurations graph of some  deterministic PDA. We shall see that this happens if $M$ has a reverse transition function, so $M$ is a reversible pushdown automaton, see \cite{kutrib}. Reversibility gives the possibility of stepping the computation back and forth, and it is a central notion in the theory of computation and quantum computing, for the specific case of reversible PDA we refer the reader to \cite{kutrib}. A reversible pushdown automaton $M=(Q,\Sigma, \Theta, \delta, q_0,\perp, F)$ is characterized by having a reverse transition function $\delta_R: Q\times \Sigma \times \Theta\to Q\times \Theta^*$ such that is able to locally reverse the computation, i.e., 
\[(q_1,\xi_1)\sdtstile{\delta}{a}(q_2,\xi_2)\text{ if and only if }(q_2,\xi_2)\sdtstile{\delta_R}{a}(q_1,\xi_1).\] 
We remark that the initial assumption that $M$ is real-time is not strict from the language point of view since by \cite[Theorem 5]{kutrib} for every reversible pushdown automaton $N$ there is a real-time reversible PDA $M$ such that $L(M)=L(N)$. Reversible pushdown automata have a peculiar dynamics since by \cite[Fact 3]{kutrib} no transition changes the length of the stack by more than one. Hence, each transition in $\delta$ has one of the following form
\[
(q,a,x) \sststile{M}{}(p,xy),\,\text{or } (q,a,x) \sststile{M}{}(p,x),\,\text{ or }(q,a,x) \sststile{M}{}(p,1)
\]
for some $x, y\in\Theta$, corresponding to the following reverse transitions
\[
(p,a,y) \sststile{\delta_R}{}(q,1),\,\text{or } (p,a,x) \sststile{\delta_R}{}(q,x),\,\text{or } (p,a,z) \sststile{\delta_R}{}(q,zx)
\]
for some $x, y, z\in\Theta$. 
\\
To deal with inverse graphs, we need to introduce a new class of PDAs more suitable to consider languages on an involutive alphabet $\wt{\Sigma}$. 
\begin{defn}[Inverse PDA]
A real-time DPDA $M=(Q,\wt{\Sigma}, \Theta, \delta, q_0,\perp, F)$ is called \emph{inverse} if  $\delta$ may be partitioned into two functions: $\delta^+$ which is the restriction of $\delta$ to $Q\times \Sigma\times \Theta$ and $\delta^-$ which is the restriction of $\delta$ to $Q\times \Sigma^{-1}\times \Theta$ such that $\delta^+$ is the reverse transition of $\delta^-$ and vice versa in the following sense:
\[
(q_1,\xi_1)\sdtstile{\delta^+}{a}(q_2,\xi_2)\text{ if and only if }(q_2,\xi_2)\sdtstile{\delta^-}{a^{-1}}(q_1,\xi_1).
\]
A language $L\subseteq \wt{\Sigma}^*$ is called inverse-context-free if $L=L(M)$ for some inverse PDA $M$ on $\wt{\Sigma}$.
\end{defn}
It is clear from the definition and the fact that $\delta^+, \delta^-$ are functions, that $\Gamma(M)$ is inverse if and only if $M$ is an inverse PDA. An important property of inverse PDAs that will be useful later, is the fact that in the configurations graph $\Gamma(M)$ there are no configurations with the stack that is empty. Indeed, any computation $(p,x)\sdtstile{M}{a}(q, 1)$ should have the reverse computation $(q, 1)\sdtstile{M}{a^{-1}}(p,x)$ which is clearly not allowed in the definition. For an invertible PDA $M$ let us set $M^{+}=(Q,\Sigma, \Theta, \delta^+, q_0, \perp, F)$.
There is a strict relationship between reversible and invertible PDAs. Indeed, any reversibile real-time PDA $N=(Q,\Sigma, \Theta, \delta, q_0,\perp, F)$ with reverse transition function $\delta_R$ may be ``extended'' to an invertible PDA $\wt{N}=(Q,\wt{\Sigma}, \Theta, \wt{\delta}, q_0,\perp, F)$ on the involutive alphabet $\wt{\Sigma}$, by extending the map $\delta$ to the map $\wt{\delta}$ defined by $\wt{\delta}(q,a^{-1}, x)=(p,\gamma)$ whenever $\delta_R(q,a,x)=(p,\gamma)$. It immediately follows from the definitions that $\wt{N}$ is inverse, $\wt{N}^+=N$, and $\Gamma(N)^-=\Gamma(\wt{N})$. By this last construction we deduce that a PDA $M$ is inverse if and only if $M^+$ is reversible. Moreover, it is evident from the language theoretic point of view, that inverse PDAs and reversible PDAs are essentially the same class of PDAs, since for any language $L$ accepted by a real-time reversible PDA $N$, the associated inverse PDA $\wt{N}$ has the property $L=L(\wt{N})\cap \Sigma^*$. 

\section{Context-free inverse graphs}\label{sec: context-free graphs}
In their paper \cite{muahu}, Muller and Schupp called a finitely generated group $G$ presented by $\la A\mid \mathcal{R}\ra$ context-free if the word problem $WP(G;A)$ is a context-free language. With our notation, this language coincides with the set $L(\Cay(G;A),\mathds{1}_G)$ of words labeling a circuit starting and ending at the group identity, so the following definition is a natural generalization for a generic inverse graph.
\begin{defn}
A rooted inverse graph $(\Gamma, x_0)$ is called context-free if $L(\Gamma, x_0)$ is a context-free language.
\end{defn}
The aim of this section is to show that for an inverse graph $\Gamma$ being context-free is equivalent to be context-free in the sense of Muller-Schupp that is also equivalent to be the configurations graph of an inverse PDA.
To prove this characterization we first need to show that the rooted graph $(\Gamma, x_0)$ is a context-free graph in the sense of Muller and Schupp, a fact that does not immediately follow from the context-freeness of the language $L(\Gamma, x_0)$. This fact has been already proved in \cite[Theorem 4.6]{Ce-Wo} in the complete case (fully deterministic in their notation), we generalize this fact for any context-free inverse graph. First we need to make some geometric considerations that arise from the fact that the language accepted by such a graph is generated by a context-free grammar $\mathcal{G}$. From now on we assume that the grammar $\mathcal{G}=(V,A,P,S)$ generating $L(\Gamma,x_0)$ is in Chomsky normal form. To visualize the productions applied to the language $L(\Gamma, x_0)$, it is useful to introduce the notion of  $\mathcal{G}$-triangulation and $\mathcal{G}$-decoration of the inverse graph $\Gamma$. This is another more convenient way to adapt the notion of $K$-triangulation, introduced in \cite{muahu, muahu2} for Cayley graphs, to the general case of an inverse graph. Similar notions have been also considered in several other papers, see for instance \cite{Ce-Wo, Pele, Wo89}.
\begin{figure}[htp]
\includegraphics[scale=1.5]{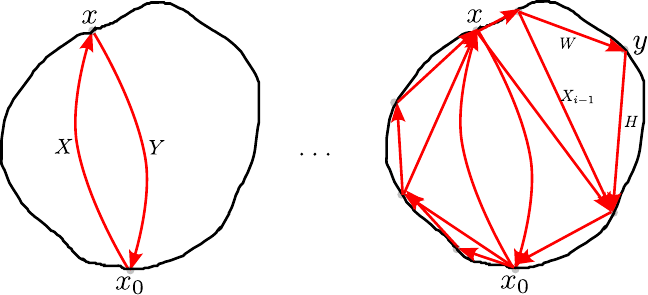}
\caption{The first step and the generic $i$-th step of the triangulation of a circuit $p=x_0\mapright{w} x_0$.} \label{fig: triang}
\end{figure}
\begin{defn}[$\mathcal{G}$-triangulation and $\mathcal{G}$-decoration]\label{defn: triangulation}
Let $w\in L(\Gamma, x_0)$, and let $p=x_0\mapright{w} x_0$ be the corresponding circuit in $\Gamma$. By Remark~\ref{rem: last rules with terminal}, any production $S\Rightarrow^*_{\mathcal{C}}w$ can be obtained by first applying all the non-terminal rules followed by the terminal ones. A $\mathcal{G}$-triangulation of $p=x_0\mapright{w} x_0$ associated to the production $S\Rightarrow^*_{\mathcal{C}}w$, is obtained by adding to $p$ new edges with labels in the variable set $V$, let us call them $V$-edges. Suppose the first production of $S\Rightarrow^*_{\mathcal{C}}w$ is $S\Rightarrow_{\mathcal{C}} XY$, where $X\Rightarrow^*_{\mathcal{C}} w_1$, $Y\Rightarrow^*_{\mathcal{C}} w_2$ with $w=w_1w_2$. In the first step we add to $p$ the two new edges $e_1, g_1$ with $\iota(e_1)=\tau(g_1)=x_0$, $\tau(e_1)=\iota(g_1)=x$ where $x\in p$ is the vertex for which $p$ factorizes as $p=x_0\mapright{w_1} x\mapright{w_2} x_0$. We also label $e_1, g_1$ by $\lambda(e_1)=X$, $\lambda(g_1)=Y$ and we say that the edges $e_1, g_1$ will give rise to the walks $x_0\mapright{w_1}x$, $x\mapright{w_2}x_0$, respectively, see Fig. \ref{fig: triang}. Now, iteratively, at step $i$ of the production 
\[
S\Rightarrow^{i-1}_{\mathcal{C}} \alpha_{i-1}X_{i-1}\beta_{i-1}\Rightarrow_{\mathcal{C}}\alpha_{i-1}WH\beta_{i-1}\Rightarrow_{\mathcal{C}}^*w
\]
we have applied rule $X_{i-1}\Rightarrow_{\mathcal{C}} WH$, then in $p$ we have already an edge $h_{i-1}$ with $\lambda(h_{i-1})=X_{i-1}$ which will give rise to the subwalk $q=\iota(h_{i-1})\mapright{u} \tau(h_{i-1})$ of $p$ and rule $X_{i-1}\Rightarrow_{\mathcal{C}} WH$ corresponds to the factorization $\iota(h_{i-1})\mapright{u_1} y\mapright{u_2} \tau(h_{i-1})$ of $q$ given by the productions $W\Rightarrow_{\mathcal{C}}^* u_1$, $H\Rightarrow_{\mathcal{C}}^* u_2$, $u=u_1 u_2$. Then, we create two new edges $e_i, g_i$ with $\iota(e_i)=\iota(h_{i-1}), \tau(g_i)=\tau(h_{i-1})$, $\tau(e_i)=\iota(g_i)=y$, $\lambda(e_i)=W$, $\lambda(g_i)=H$. We say that $e_i,g_i$ are generated from $h_{i-1}$. The process of triangulation stops when we apply the first terminal rule. 
The $\mathcal{G}$-decoration of $\Gamma$ is the $(\wt{A}\cup V)$-directed graph $\Gamma_{\mathcal{G}}$ obtained from $\Gamma$ by adding all the new $V$-edges involved in all the possible $\mathcal{G}$-triangulations of all the circuits starting at $x_0$.
\end{defn}
Throughout the paper the following constant
\[
K=\max\left\{\min\{|u|:u\in L(X), X\in V\}\right \}
\]
will play a fundamental role. The following lemma can be found in \cite{muahu} in the case of Cayley graphs of finitely generated groups, but also in \cite[Lemma 4.5]{Ce-Wo} with a similar spirit. 
\begin{lemma}\label{lem: existence of derived walks}
Let $e$ be a $V$-edge of $\Gamma_{\mathcal{G}}$ with $\lambda(e)=X\in V$. Then, for any $z\in L(X)$, there is a walk $\iota(e)\mapright{z}\tau(e)$ in $\Gamma$. In particular, the distance in $\Gamma$ between the initial and terminal vertex of $e$ satisfies the bound $d(\iota(e), \tau(e))\le K$.
\end{lemma}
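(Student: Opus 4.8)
The plan is to combine the substitution property of context-free grammars with the determinism of the inverse graph $\Gamma$. First I would unwind the construction producing $e$: by Definition~\ref{defn: triangulation} the $V$-edge $e$ with $\lambda(e)=X$ is created during the $\mathcal{G}$-triangulation of some circuit $p=x_0\mapright{w}x_0$ coming from a derivation $S\Rightarrow^*_{\mathcal{C}}w$, and $e$ gives rise to a subwalk $\iota(e)\mapright{u}\tau(e)$ of $p$ whose label $u$ is exactly the factor of $w$ derived from the occurrence of $X$ attached to $e$; in particular $u\in L(X)$. Writing $w=\alpha u\beta$, the complementary pieces of $p$ are walks $x_0\mapright{\alpha}\iota(e)$ and $\tau(e)\mapright{\beta}x_0$, and $\alpha,\beta$ are terminal words since the leaves of the derivation tree of $w$ all lie in $\wt{A}$.

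The key step is, for an arbitrary $z\in L(X)$, to build a new circuit at $x_0$ labeled $\alpha z\beta$. Since $X\Rightarrow^*_{\mathcal{C}}z$, replacing in the derivation of $w$ the sub-derivation $X\Rightarrow^*_{\mathcal{C}}u$ by a derivation $X\Rightarrow^*_{\mathcal{C}}z$ yields a valid derivation $S\Rightarrow^*_{\mathcal{C}}\alpha z\beta$. Hence $\alpha z\beta\in L(\mathcal{C})=L(\Gamma,x_0)$, so there is a circuit $x_0\mapright{\alpha z\beta}x_0$ in $\Gamma$.

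Next I would locate this circuit using Lemma~\ref{lem: there are reduced walks}. Since there is at most one walk from $x_0$ with a given label, the prefix $\alpha$ of the new circuit traces the same walk $x_0\mapright{\alpha}\iota(e)$ as in $p$; thus the circuit continues as $\iota(e)\mapright{z}y\mapright{\beta}x_0$ for some vertex $y$, giving in particular a walk $\iota(e)\mapright{z}y$. It remains to identify $y$ with $\tau(e)$. From $p$ we have $\tau(e)\mapright{\beta}x_0$, and passing to inverse edges yields $x_0\mapright{\beta^{-1}}\tau(e)$; likewise the new circuit gives $x_0\mapright{\beta^{-1}}y$. The uniqueness in Lemma~\ref{lem: there are reduced walks} then forces $y=\tau(e)$, so $\iota(e)\mapright{z}\tau(e)$ is a walk in $\Gamma$, proving the first assertion.

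For the distance bound I would apply the first part to a shortest word $z\in L(X)$, whose length equals $\min\{|u|:u\in L(X)\}\le K$; the resulting walk $\iota(e)\mapright{z}\tau(e)$ has length $|z|$, so $d(\iota(e),\tau(e))\le|z|\le K$. The main obstacle is the bookkeeping of the first paragraph: one must precisely tie the combinatorics of the derivation tree—namely which factor of $w$ is governed by the occurrence of $X$ attached to $e$—to the geometry of $p$, so that $\iota(e),\tau(e)$ really are the vertices reached by reading the prefix $\alpha$ and then $u$. Once this is set up, both the determinism of $\Gamma$ and its reverse-walk (inverse) structure are needed to relocate the moving endpoint back onto $\tau(e)$ after the substitution.
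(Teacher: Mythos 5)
Your proposal is correct and follows essentially the same route as the paper: unwind the triangulation to get a circuit labeled $w_1 u w_2$ with $X\Rightarrow^*_{\mathcal{C}}u$, substitute a derivation $X\Rightarrow^*_{\mathcal{C}}z$ to obtain $w_1 z w_2\in L(\Gamma,x_0)$, and invoke determinism to place the new walk between $\iota(e)$ and $\tau(e)$. Your extra step identifying the endpoint $y$ with $\tau(e)$ via the inverse edges $x_0\mapright{\beta^{-1}}y$ and $x_0\mapright{\beta^{-1}}\tau(e)$ just makes explicit what the paper compresses into ``since $\Gamma$ is deterministic,'' and your derivation of the bound $d(\iota(e),\tau(e))\le K$ from a shortest word of $L(X)$ matches the intended reading of the constant $K$.
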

\begin{proof}
By definition of $\mathcal{G}$-decoration there are derivations $S\Rightarrow_{\mathcal{C}}^* W_1XW_2$, $W_1, W_2\in V^*$, $W_1\Rightarrow_{\mathcal{C}}^* w_1, W_2\Rightarrow_{\mathcal{C}}^* w_2, X\Rightarrow_{\mathcal{C}}^* u$, $w_1,w_2,u\in\wt{A}^*$, corresponding to a circuit $x_0\vlongmapright{w_1uw_2} x_0$ in $\Gamma$ where $x_0\mapright{w_1}\iota(e)$, $\tau(e)\mapright{w_2}x_0$ are two subwalks. Since $z\in L(X)$ there is a derivation $X\Rightarrow_{\mathcal{C}}^*  z$, and so also a derivation $S\Rightarrow_{\mathcal{C}}^* w_1zw_2$. Thus, since $\Gamma$ is deterministic and $w_1zw_2\in L(\Gamma,x_0)$ we deduce that $\iota(e)\mapright{z}\tau(e)$ is also a walk in $\Gamma$.
\end{proof}
From this lemma we deduce that the decorated graph $\Gamma_{\mathcal{G}}$ is deterministic: if $e,g$ are two new edges with $\iota(e)=\iota(g)$ and $\lambda(e)=\lambda(g)=X$, then for any $z\in L(X)$ there are walks $\iota(e)\mapright{z}\tau(e)$, $\iota(g)\mapright{z}\tau(g)$ in $\Gamma$, hence the determinism and condition $\iota(e)=\iota(g)$ yield $\tau(e)=\tau(g)$.
Henceforth, $d(x,y)$ denotes the distance in the inverse graph $\Gamma$, and put $\|y\|_{x_0}=d(x_0,v)$. In \cite{muahu2} the Muller and Schupp have introduced the central notion of end-cone together with the idea of end-isomorphism of such subgraphs. We briefly recall them. For a vertex $v$ with $\|v\|_{x_0}=n$, the end-cone at $v$ is the (inverse) subgraph $\Gamma(v,x_0)$ that is the connected component of $\Gamma\setminus D_{n-1}(x_0)$ containing $v$, and we denote by $\Delta(v,x_0)=\{y\in \Gamma(v,x_0): \|y\|_{x_0}=n\}$ the set of frontier points of $\Gamma(v,x_0)$. An \emph{end-isomorphism} $\psi:\Gamma(v,x_0)\to \Gamma(w,x_0)$ between the end-cones $\Gamma(v,x_0), \Gamma(w,x_0)$ is an isomorphism of inverse graphs with the property of preserving the frontier points: $\psi(\Delta(v,x_0))=\Delta(w,x_0)$. The rooted graph $(\Gamma, x_0)$ is called \emph{a context-free graph} if up to end-isomorphism there are finitely many end-cones. In \cite{muahu2} it is shown that the graph $(\Gamma(M), x_0)$ of the instantaneous descriptions of a pushdown automaton is a context-free graph, and conversely given a context free graph $(\Gamma,x_0)$  there is a pushdown automaton $M$ such that $(\Gamma(M), x_0)$ is isomorphic to $(\Gamma, x_0)$. 
\\
Now we show that an inverse graph $\Gamma$ whose accepted language $L(\Gamma, x_0)$ is context-free, is also a context-free graph in the sense of Muller and Schupp. The strategy of the proof is to show that up to end-isomorphism there are finitely many end-cones $\Gamma(v,x_0)$ by showing that such a graph is completely determined by the finite decorated subgraph
\[
\Delta^C(v,x_0)=\la x\in V(\Gamma_{\mathcal{G}}):\exists y\in \Delta(v,x_0)\mbox{ with }d(x,y)\le C\ra_{\Gamma_{\mathcal{G}}}
\]
of the decorated graph $\Gamma_{\mathcal{G}}$, where $C\ge 1$ is a constant independent from $v$. To this end, we need to introduce some notations and concepts that are useful in the proof of such fact. We say that a walk $p=x\mapright{u}y$ contained in $\Gamma(v,x_0)$ is \emph{$C$-external}, or simply external when $C$ is clear from the context, if the only vertices belonging to $\Delta^C(v,x_0)$ are the endpoints $x,y$. For a walk $p=v_1\mapright{w}v_2\subseteq \Gamma(v,x_0)$ with $v_1,v_2\in\Delta^C(v,x_0)$, the set of \emph{external walks} $\mathcal{E}(p, C)$ is defined as the set of all the subwalks $\pi_1, \ldots, \pi_k$ of $p$ that are external, see Fig.~\ref{fig: external paths} for a graphical representation. We have the following lemma.
\begin{lemma}\label{lem: existence of v-edge}
Let $\ell=x_0\mapright{v_1}x\mapright{u}y\mapright{v_2}x_0$ be a circuit in $\Gamma$ such that 
$g_1=x_0\mapright{v_1}x$, $g_2=y\mapright{v_2}x_0$ are two geodesics connecting $x_0$ with two vertices $x,y\in \Delta(v,x_0)$ with $\|v\|_{x_0}=n>K$, and $p=x\mapright{u}y\subseteq \Gamma(v,x_0)$. Then, there is a $V$-edge $\hat{e}$ giving rise to the subwalk $\ell(\iota(\hat{e}), \tau(\hat{e}))$ containing $p$, such that the length of the subwalks $g_1(\iota(\hat{e}), x)$, $g_2(y,\tau(\hat{e}))$ is at most $K$. 
\end{lemma}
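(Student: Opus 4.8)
The plan is to read off $\hat e$ from the derivation tree of a fixed derivation of $\lambda(\ell)=v_1uv_2$ and then to convert the distance bound of Lemma~\ref{lem: existence of derived walks} into the two length bounds by the triangle inequality. Since $\lambda(\ell)=v_1uv_2\in L(\Gamma,x_0)$, I would fix a derivation $S\Rightarrow^*_{\mathcal{C}}v_1uv_2$ and the associated $\mathcal{G}$-triangulation of $\ell$. Viewing $\ell$ as the linearly ordered walk $x_0=\ell_0\to\ell_1\to\cdots\to\ell_N=x_0$ (so $N=2n+|u|$), every variable occurrence in the derivation carries a $V$-edge whose span is a contiguous subwalk $\ell(\ell_a,\ell_b)$, and these spans are nested exactly as the nodes of the binary derivation tree; the root $S$ spans all of $\ell$, and applying a non-terminal rule $X\Rightarrow WH$ splits the span of the corresponding edge at a vertex $\ell_j$ into the spans of two child $V$-edges labelled by the variables $W,H$. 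Writing $x=\ell_n$ and $y=\ell_{n+|u|}$, I say a node \emph{covers} $p$ if its span contains $p$, i.e. $a\le n$ and $b\ge n+|u|$.

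First I would locate $\hat e$. The root covers $p$, and since $p$ is a connected subwalk of length $|u|$, for $|u|\ge 1$ at most one child of a covering node can again cover $p$; hence the covering nodes form a path descending from the root, and I take $\hat e$ to be the \emph{deepest} one. By construction $\ell(\iota(\hat e),\tau(\hat e))\supseteq p$, so $\iota(\hat e)=\ell_a$ with $a\le n$ lies on $g_1$ and $\tau(\hat e)=\ell_b$ with $b\ge n+|u|$ lies on $g_2$. The first point to check is that $\hat e$ is an honest $V$-edge, i.e. $\hat e\neq S$: if it were the root, then neither child would cover $p$, forcing the split vertex $\ell_j$ to lie strictly inside $p$ and hence (as $p\subseteq\Gamma(v,x_0)$) to satisfy $d(x_0,\ell_j)\ge n>K$; but the child $V$-edge joining $x_0$ to $\ell_j$ has $d(x_0,\ell_j)\le K$ by Lemma~\ref{lem: existence of derived walks}, a contradiction. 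Since in Chomsky normal form $S$ occurs only at the root, this shows $\hat e$ is labelled by an honest variable and carries a genuine $V$-edge. The degenerate cases $|u|\le 1$, where the descent ends at a terminal edge whose span equals or is adjacent to $p$, give the bounds at once, since then $g_1(\iota(\hat e),x)$ and $g_2(y,\tau(\hat e))$ have length $0$ or $1\le K$.

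It remains to bound the two subwalks in the generic case, where $\hat e$ is non-terminal. Its two child $V$-edges meet at a split vertex $w=\ell_j$ which, because neither child covers $p$, lies strictly inside $p$, so $d(x_0,w)\ge n$. Applying Lemma~\ref{lem: existence of derived walks} to each child gives $d(\iota(\hat e),w)\le K$ and $d(w,\tau(\hat e))\le K$. Since $g_1$ and $g_2$ are geodesics of length $n$, every vertex on them is at distance from $x_0$ equal to its position, so $d(x_0,\iota(\hat e))=a$ and $d(x_0,\tau(\hat e))=2n+|u|-b$, whence $g_1(\iota(\hat e),x)$ has length $n-a$ and $g_2(y,\tau(\hat e))$ has length $b-(n+|u|)$. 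The triangle inequality $n\le d(x_0,w)\le d(x_0,\iota(\hat e))+d(\iota(\hat e),w)\le a+K$ yields $n-a\le K$, and symmetrically $n\le (2n+|u|-b)+K$ yields $b-(n+|u|)\le K$, which are exactly the desired bounds. I expect the main obstacle to be the bookkeeping that makes $\hat e$ well defined and shows that it cannot degenerate to the root $S$ (which carries no edge); once the split vertex is pinned strictly inside the end-cone, the distance-to-length conversion through the geodesics $g_1,g_2$ is routine.
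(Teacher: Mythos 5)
Your proof is correct and follows essentially the same route as the paper's: both triangulate $\ell$, select a minimal $V$-edge whose span contains $p$ (you take the deepest covering node of the derivation tree, while the paper maximizes the index sum $i+j$ over $V$-edges with endpoints on $g_1,g_2$), deduce that the split vertex of its two children lies in $p$ and hence has norm at least $n$, and then convert the bound of Lemma~\ref{lem: existence of derived walks} into the two length bounds via the triangle inequality along the geodesics. Your explicit handling of the degenerate cases $|u|\le 1$ and of the exclusion of the root $S$ tightens bookkeeping that the paper leaves implicit, but it is the same argument.
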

\begin{proof}
Consider a $\mathcal{G}$-triangulation of the circuit $\ell$ and let $\mathcal{T}$ be the set of $V$-edges occurring in such triangulation. Let us assume that the two geodesics may be decomposed in the following way
\begin{eqnarray*}
x_0\mapright{v_1}x=x_0\mapright{a_1}x_1\mapright{a_2}x_2\ldots \mapright{a_n}x_n\\
x_0\mapright{v_2^{-1}}y=y_0\mapright{b_1}y_1\mapright{b_2}y_2\ldots \mapright{b_n}y_n
\end{eqnarray*}
with $x_n=x, y_n=y$. In the first step of the $\mathcal{G}$-triangulation we have two $V$-edges $e_1, g_1$ giving rise to the subwalks $\ell(x_0, z), \ell(z,x_0)$, respectively, where $z=\tau(e_1)=\iota(g_1)$. Since any vertex $s$ of $p$ has the property $\|s\|_{x_0}\ge n$, we have that $z\notin p$, for if we would have $\|z\|_{x_0}\ge n$, hence by Lemma~\ref{lem: existence of derived walks} we would get $n\le \|z\|_{x_0}\le K$, a contradiction. One among $e_1, g_1$ gives rise to a subwalk of $\ell$ containing $p$. This implies that the set $\mathcal{T}'$ of $V$-edges $e$ of the triangulation $\mathcal{T}$ with $\iota(e)\in \{x_0,\ldots, x_n\}$, $\tau(e)\in \{y_0,\ldots, y_n\}$ and giving rise to a subwalk containing $p$, is non-empty. Choose in $\mathcal{T}'$ a $V$-edge $\hat{e}$ with the following maximality condition: if $x_i=\iota(\hat{e})$ and $y_j=\tau(\hat{e})$, then $i+j$ is maximal. Now, we claim that the $V$-edge $\hat{e}$ gives rise to two new edges $e', g'$ in the triangulation $\mathcal{T}$ with the property $\tau(e')=\iota(g')\in p$. Indeed, if for instance we would have $y_{j'}=\tau(e')\in \{y_0,\ldots, y_n\}$, then $j'<j$ would contradict the choice of $\hat{e}$.
Note that $\|\iota(\hat{e})\|_{x_0}\ge n-K$ holds, since otherwise $\|\iota(\hat{e})\|_{x_0}< n-K$ would imply $\|\tau(e')\|_{x_0}< \|\iota(\hat{e})\|_{x_0}+d(\iota(\hat{e}), \tau(e'))\le (n-K)+K=n$, by Lemma~\ref{lem: existence of derived walks} and the fact that $e'$ is a $V$-edge connecting $\iota(\hat{e})$ and $\tau(e')$, however, this contradicts condition $\tau(e')\in p$ (recall that all the vertices of $\Gamma(v,x_0)$ have the property of being at distance at least $n$ from $x_0$). Similarly, we have $\|\tau(\hat{e})\|_{x_0}\ge n-K$.
Since $g_1=x_0\mapright{v_1}x$, $g_2=y\mapright{v_1}x_0$ are two geodesics of length $n$ and $\|\iota(\hat{e})\|_{x_0},\|\tau(\hat{e})\|_{x_0}\ge n-K$ we conclude that the length of the subwalks $g_1(\iota(\hat{e}), x)$, $g_2(y,\tau(\hat{e}))$ is at most $K$. 
\end{proof}
From the previous lemma we immediately conclude that the diameter of $\Delta(v,x_0)$ is uniformly bounded from above, see for instance \cite[Lemma 17]{Pele}.
\begin{prop}\label{prop: frontier uniformly bounded}
For any $v\in V(\Gamma)$ the diameter $\delta_{\Gamma}(\Delta(v,x_0))\le 3K$.
\end{prop}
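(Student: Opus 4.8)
The goal is to bound the diameter $\delta_\Gamma(\Delta(v,x_0))$ by $3K$, so I need to show that any two frontier points $y_1,y_2\in\Delta(v,x_0)$ satisfy $d(y_1,y_2)\le 3K$. Since both lie on the sphere of radius $n=\|v\|_{x_0}$ and the end-cone $\Gamma(v,x_0)$ is connected, I can pick a walk inside $\Gamma(v,x_0)$ joining $y_1$ to $y_2$, and then feed it into Lemma~\ref{lem: existence of v-edge}. The plan is to build a circuit through $x_0$ to which that lemma applies, extract a controlling $V$-edge, and then use the bound $d(\iota(\hat e),\tau(\hat e))\le K$ from Lemma~\ref{lem: existence of derived walks} together with the two length-at-most-$K$ geodesic tails to assemble a path of length at most $3K$ between $y_1$ and $y_2$.

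Concretely, first I would choose geodesics $g_1=x_0\mapright{v_1}y_1$ and $g_2=y_2\mapright{v_2}x_0$, each of length exactly $n$, and a walk $p=y_1\mapright{u}y_2$ lying entirely in $\Gamma(v,x_0)$ (possible by connectedness of the end-cone and the fact that every vertex of $\Gamma(v,x_0)$ has distance at least $n$ from $x_0$). Concatenating gives a circuit $\ell=x_0\mapright{v_1}y_1\mapright{u}y_2\mapright{v_2}x_0$ of exactly the form required by Lemma~\ref{lem: existence of v-edge}, provided $n>K$. That lemma then produces a $V$-edge $\hat e$ whose associated subwalk $\ell(\iota(\hat e),\tau(\hat e))$ contains $p$, with the initial and terminal tails $g_1(\iota(\hat e),y_1)$ and $g_2(y_2,\tau(\hat e))$ each of length at most $K$. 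By Lemma~\ref{lem: existence of derived walks} the endpoints of $\hat e$ satisfy $d(\iota(\hat e),\tau(\hat e))\le K$. The triangle inequality then yields
\[
d(y_1,y_2)\le d(y_1,\iota(\hat e))+d(\iota(\hat e),\tau(\hat e))+d(\tau(\hat e),y_2)\le K+K+K=3K,
\]
which is exactly the claimed bound.

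The one case I would need to dispose of separately is $n=\|v\|_{x_0}\le K$: there Lemma~\ref{lem: existence of v-edge} does not apply directly, but in that regime every frontier point already satisfies $\|y_i\|_{x_0}=n\le K$, so $d(y_1,y_2)\le \|y_1\|_{x_0}+\|y_2\|_{x_0}\le 2K\le 3K$ by routing through $x_0$. Thus the bound holds in all cases.

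The main obstacle is really just the careful bookkeeping in setting up the circuit $\ell$ so that its decomposition matches the hypotheses of Lemma~\ref{lem: existence of v-edge} (in particular that the middle walk $p$ stays inside the end-cone and that the two outer segments are genuine geodesics), after which the estimate is immediate. In other words, essentially all the real work has already been front-loaded into Lemma~\ref{lem: existence of v-edge}; this proposition is a short corollary, and the only subtlety is handling the small-radius case and confirming that the $V$-edge $\hat e$ indeed straddles the two frontier points with short geodesic tails on each side.
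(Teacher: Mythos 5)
Your proof is correct and takes essentially the same route as the paper's: concatenate two geodesics to the frontier points with a walk inside the end-cone, apply Lemma~\ref{lem: existence of v-edge} to extract the $V$-edge $\hat{e}$, and conclude via the triangle inequality using the bound $d(\iota(\hat{e}),\tau(\hat{e}))\le K$ from Lemma~\ref{lem: existence of derived walks}. Your explicit treatment of the small-radius case $\|v\|_{x_0}\le K$ (where Lemma~\ref{lem: existence of v-edge} does not apply) is a detail the paper's one-line proof leaves implicit, and it is handled correctly.
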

\begin{proof}
It follows from the previous Lemma~\ref{lem: existence of v-edge} and Lemma~\ref{lem: existence of derived walks} and the triangular inequality:
\[
d(x,y)\le d(x,\iota(\hat{e}))+d(\iota(\hat{e}), \tau(\hat{e}))+d(\tau(\hat{e}), y)\le 3K
\]
\end{proof}
We say that a morphism of $(\wt{A}\cup V)$-directed graphs $\psi:  \Delta^C(v,x_0)\to  \Delta^C(w,x_0)$ is \emph{norm-preserving} if for any $z_1, z_2\in  \Delta^C(v,x_0)$ with $\|z_1\|_{x_0}\le \|z_2\|_{x_0}$ implies $\|\psi(z_1)\|_{x_0}\le \|\psi(z_2)\|_{x_0}$. Note that we have that if $\|z_1\|_{x_0}= \|z_2\|_{x_0}$ then $\|\psi(z_1)\|_{x_0}= \|\psi(z_2)\|_{x_0}$. In case $\psi:  \Delta^C(v,x_0)\to  \Delta^C(w,x_0)$ is an isomorphism of $(\wt{A}\cup V)$-directed graphs we require that also $\psi^{-1}$ is norm-preserving. Therefore, in this case, since $\Delta^C(v,x_0)$ is finite we may conclude that for any $u\in\Delta(v,x_0)$, $\|\psi(u)\|_{x_0}=\|w\|_{x_0}$, i.e. $\psi(\Delta(v,x_0))=\Delta(w,x_0)$.
\begin{lemma}\label{lem: external walk}
Put $C\ge 2K$. Let $v,w$ be two vertices with $\|v\|_{x_0},\|w\|_{x_0}>K$ such that there is a norm-preserving isomorphism $\psi:  \Delta^C(v,x_0)\to  \Delta^C(w,x_0)$. Then, for any external walk $x'\mapright{u}x''$ in $\Gamma(v,x_0)$ there is an external walk $\psi(x')\mapright{u}\psi(x'')$ in $\Gamma(w,x_0)$. 
\end{lemma}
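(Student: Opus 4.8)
The plan is to prove the statement in two movements: first I would manufacture \emph{some} walk in $\Gamma(w,x_0)$ labelled by $u$ running from $\psi(x')$ to $\psi(x'')$, and only afterwards argue that it is external.

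For the existence of the walk, the idea is to cover $p$ by a single $V$-edge whose endpoints sit inside the collar $\Delta^C(v,x_0)$, and then transport that edge through $\psi$. Concretely, since $C\ge 2K$ and $p=x'\mapright{u}x''$ is external, its endpoints lie in $\Delta^C(v,x_0)$; I would let $\bar x,\bar y\in\Delta(v,x_0)$ be the frontier vertices lying below $x',x''$ on geodesics to $x_0$, and write $c_1,c_2$ for the labels of the in-cone geodesic segments $\bar x\mapright{c_1}x'$ and $x''\mapright{c_2}\bar y$ (each of length at most $C$). The walk $p'=\bar x\mapright{c_1}x'\mapright{u}x''\mapright{c_2}\bar y$ then lies in $\Gamma(v,x_0)$ with both endpoints on the frontier, so Lemma~\ref{lem: existence of v-edge} applies and returns a $V$-edge $\hat e$ of $\Gamma_{\mathcal G}$ giving rise to a subwalk containing $p'$ (hence $p$), with $d(\iota(\hat e),\bar x)\le K$ and $d(\tau(\hat e),\bar y)\le K$. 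Because $K\le C$, both $\iota(\hat e)$ and $\tau(\hat e)$ lie in $\Delta^C(v,x_0)$, so $\hat e$ is an edge of the induced subgraph $\Delta^C(v,x_0)$; writing $X=\lambda(\hat e)$, the label of the subwalk it spans is a word $s_1c_1\,u\,c_2s_2\in L(X)$ with $|s_1|,|s_2|\le K$.

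Next I would transport $\hat e$: as $\psi$ is an isomorphism, $\psi(\hat e)$ is a $V$-edge of $\Delta^C(w,x_0)$ with the same label $X$, so Lemma~\ref{lem: existence of derived walks} yields a walk $\psi(\iota(\hat e))\mapright{s_1c_1uc_2s_2}\psi(\tau(\hat e))$ in $\Gamma$. The initial segment $\iota(\hat e)\mapright{s_1c_1}x'$ is entirely contained in $\Delta^C(v,x_0)$, hence $\psi$ carries it to $\psi(\iota(\hat e))\mapright{s_1c_1}\psi(x')$; by determinism of $\Gamma$ the long walk must therefore pass through $\psi(x')$ after reading $s_1c_1$, and symmetrically through $\psi(x'')$ before reading $c_2s_2$. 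Stripping off these two segments leaves exactly the walk $q=\psi(x')\mapright{u}\psi(x'')$ sought.

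The hard part will be showing that $q$ is external, since the construction only controls $q$ near the collar and says nothing about its depth deep inside the cone. Here I would argue by a ``first return'' contradiction. First note it suffices to rule out interior vertices of $q$ lying in $\Delta^C(w,x_0)$: any excursion of $q$ out of $\Gamma(w,x_0)$ would have to cross the frontier $\Delta(w,x_0)\subseteq\Delta^C(w,x_0)$ at an interior vertex. So suppose some interior vertex of $q$ belongs to $\Delta^C(w,x_0)$ and let $t$ be the first such vertex; then $\psi(x')\mapright{u_a}t$, with $u_a$ a nonempty proper prefix of $u$, is an external walk in $\Gamma(w,x_0)$. Since $\psi^{-1}$ is again a norm-preserving isomorphism, applying the existence construction above to it produces a walk $x'\mapright{u_a}\psi^{-1}(t)$ in $\Gamma$; by determinism this is the length-$|u_a|$ prefix of $p$, so $\psi^{-1}(t)$ is an \emph{interior} vertex of $p$. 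But $t\in\Delta^C(w,x_0)$ forces $\psi^{-1}(t)\in\Delta^C(v,x_0)$, contradicting the externality of $p$. Hence $q$ has no interior vertex in the collar, is therefore trapped in $\Gamma(w,x_0)$, and is external, as required.
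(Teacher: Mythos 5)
Your proof is correct, and its first movement is essentially the paper's own construction: both arguments build a circuit through $x_0$ out of geodesics meeting the frontier $\Delta(v,x_0)$ together with connecting segments inside $\Gamma(v,x_0)\cap\Delta^C(v,x_0)$, invoke Lemma~\ref{lem: existence of v-edge} to produce a $V$-edge $\hat e$ lying in $\Delta^C(v,x_0)$ whose derived subwalk contains the given external walk, transport $\hat e$ through $\psi$, apply Lemma~\ref{lem: existence of derived walks} to the image edge, and use determinism (plus co-determinism at the far end) to cut out the walk $\psi(x')\mapright{u}\psi(x'')$. Where you genuinely diverge is the externality argument. The paper fixes a minimal-length counterexample over both directions ($\psi$ and $\psi^{-1}$), extracts the maximal external prefix of the constructed walk, and uses the minimality hypothesis to pull that shorter external walk back; you instead take the first interior vertex $t$ of $q$ lying in $\Delta^C(w,x_0)$ and run the existence construction a second time, in the reverse direction, on the prefix $\psi(x')\mapright{u_a}t$. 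This is legitimate precisely because, as you implicitly exploit, the construction never uses externality of its input -- only that the walk lies in the cone and has endpoints in the collar -- and determinism then identifies the pulled-back walk with a prefix of $p$, forcing an interior vertex of $p$ into $\Delta^C(v,x_0)$. Your route thus eliminates the minimal-counterexample/induction bookkeeping at the cost of invoking the construction twice; the mechanism of contradiction is otherwise parallel to the paper's. Two details you gloss over (the paper glosses comparable ones): you should check that $\psi(x')$ and $\psi(x'')$ actually lie in $\Gamma(w,x_0)$, which follows since the $\psi$-images of the collar segments $\bar x\mapright{c_1}x'$ and $x''\mapright{c_2}\bar y$ start at frontier vertices and, by norm-preservation, never drop below norm $\|w\|_{x_0}$; and in the excursion reduction you should rule out a frontier crossing at the endpoint $\psi(x')$ itself, which holds because for $|u|\ge 2$ the second vertex of $p$ is at distance greater than $C$ from $\Delta(v,x_0)$, so $x'$, and hence $\psi(x')$, is not a frontier vertex, while the case $|u|=1$ is immediate.
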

\begin{proof}
To obtain a contradiction assume that there is an external walk $x'\mapright{u}x''\subseteq \Gamma(v,x_0)$ for which there is no external walk $\psi(x')\mapright{u}\psi(x'')$ in $\Gamma(w,x_0)$, or vice versa there is some external walk $y'\mapright{u'}y''$ in $\Gamma(w,x_0)$ for which there is no external walk $\psi^{-1}(y')\mapright{u'}\psi^{-1}(y'')$ in $\Gamma(v,x_0)$. Among such walks choose one of minimal length, and without loss of generality suppose that 
\[
\pi_1=x_1\mapright{u}x_{k}=x_1\mapright{u_1}x_{2}\mapright{u_2}\ldots x_{k-1}\mapright{u_{k-1}}x_{k}
\]
in $\Gamma(v,x_0)$ is external, but there is no external walk with endpoints $\psi(x_1), \psi(x_k)$ and labeled by $u$ in $\Gamma(w,x_0)$. Now, it is straightforward to show that if there is a walk $z\mapright{v}y\subseteq \Delta^C(v,x_0)$ connecting a vertex $z\in \Delta(v,x_0)$ with a vertex $y$ with $\|y\|_{x_0}\ge \|v\|_{x_0}$, then there is a subwalk $z'\mapright{v'}y\subseteq \Gamma(v,x_0)$ for some $z'\in\Delta(v,x_0) $. Thus, from this observation it follows that there are two vertices $x,y\in\Delta(v,x_0)$ for which we may find two paths $x\mapright{w_1}x_1, x_k\mapright{w_2}y\subseteq \Gamma(v,x_0)\cap  \Delta^C(v,x_0)$. Let $x_0\mapright{v_1} x$, $y\mapright{v_2}x_0$ be two geodesics and consider the circuit:
\[
\ell=x_0\mapright{v_1} x\mapright{w_1}x_1\mapright{u}x_{k}\mapright{w_2}y\mapright{v_2}x_0
\]
Now, by applying Lemma~\ref{lem: existence of v-edge} to $\ell$, we conclude that there is a $V$-edge $\hat{e}=\iota(\hat{e})\mapright{X}\tau(\hat{e})$ that gives rise to the following subwalk
\[
\ell(\iota(\hat{e}), \tau(\hat{e}))=\iota(\hat{e})\mapright{v_1'}x\vvlongmapright{w_1uw_2} y\mapright{v_2'}\tau(\hat{e})
\]
with the property $x\mapright{v_1'^{-1}}\iota(\hat{e}), \tau(\hat{e})\mapright{v_2'^{-1}}y\subseteq \Delta^C(v,x_0)$. Thus, the following walk
\[
x\mapright{v_1'^{-1}}\iota(\hat{e})\mapright{X}\tau(\hat{e})\mapright{v_2'^{-1}}y
\]
is contained in $\Delta^C(v,x_0)$. Using the isomorphism $\psi$, we may consider the corresponding image walk in $\Delta^C(w,x_0)$ 
\[
\psi(x)\mapright{v_1'^{-1}}\psi(\iota(\hat{e}))\mapright{X}\psi(\tau(\hat{e}))\mapright{v_2'^{-1}}\psi(y)
\]
with $\psi(x), \psi(y)\in\Delta(w,x_0)$ since $\psi$ is norm-preserving. Since $v_1'w_1uw_2v_2'\in L(X)$, by Lemma~\ref{lem: existence of derived walks} there is the following walk 
\[
\psi(x)\mapright{v_1'^{-1}}\psi(\iota(\hat{e}))\mapright{v_1'w_1}\psi(x_1)\mapright{u} \psi(x_k)\mapright{w_2v_2'}\psi(\tau(\hat{e}))\mapright{v_2'^{-1}}\psi(y)
\]
hence by the determinism of $\Gamma$ we have the walk $\psi(x)\mapright{w_1}\psi(x_1)\mapright{u} \psi(x_k)\mapright{w_2}\psi(y)$.
Now, since the walks $x\mapright{w_1}x_1$, $x_k\mapright{w_2}y$ are in $\Delta^C(v,x_0)$, we deduce that $\psi(x)\mapright{w_1}\psi(x_1)$ and $ \psi(x_k)\mapright{w_2}\psi(y)$ are walks in $\Delta^C(w,x_0)$. Since $x_2\in \Gamma(v,x_0)\setminus\Delta^C(v,x_0)$, we conclude that $\psi(x_2)\in \Gamma(w,x_0)\setminus \Delta^C(w,x_0)$ since otherwise $\psi(x_1)\mapright{u_1}\psi(x_2)$ would belong to $\Delta^C(w,x_0)$, and so also $x_1\mapright{u_1} x_2$ would be an edge in $\Delta^C(v,x_0)$ which would contradict the fact that $\pi_1$ is an external walk. Therefore, there is a maximal subwalk 
\[
\psi(x_1)\vvlongmapright{u_1\ldots u_{j-1}}\psi(x_j)
\]
of $\psi(x_1)\mapright{u}\psi(x_k)$ that is contained in $\Gamma(w,x_0)$ and such that it is external. Since $\psi(x_1)\mapright{u}\psi(u_k)$ is not external, we deduce $j<k$. Thus, by the minimality condition on $\pi_1$, we necessarily have that $x_1\vvlongmapright{u_1\ldots u_{j-1} }x_j$ is an external walk. Hence, $x_j\in \Delta^C(v,x_0)$, $j<k$, however, this contradicts the fact that $\pi_1$ is an external walk.
\end{proof}
\begin{figure}[htp]
\includegraphics[scale=1.3]{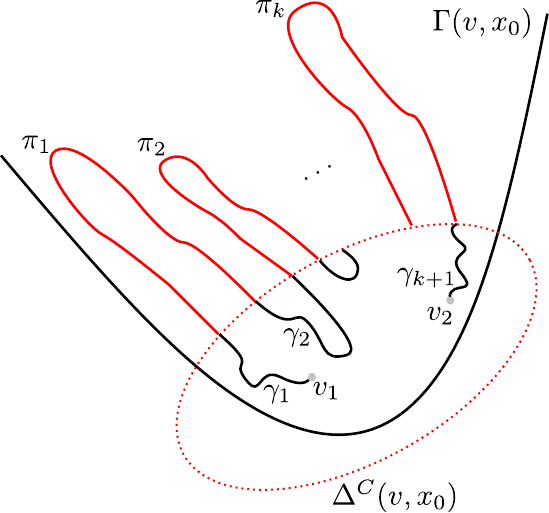}
\caption{In red the external walks $\mathcal{E}(p, C)$ of the walk $p=v_1\mapright{w}v_2$.} \label{fig: external paths}
\end{figure}
We have the following proposition.
\begin{prop}\label{prop: norm-preserving iso are end-isomorphic}
Put $C\ge 2K$. Let $v,w$ be two vertices with $\|v\|_{x_0},\|w\|_{x_0}>K$ such that there is a norm-preserving isomorphism $\psi:  \Delta^C(v,x_0)\to  \Delta^C(w,x_0)$ with $w=\psi(v)$. Then the two inverse subgraphs $\Gamma(v,x_0), \Gamma(\psi(v),x_0)$ are end-isomorphic. 
\end{prop}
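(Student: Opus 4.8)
The plan is to extend the given collar isomorphism $\psi$ to an isomorphism $\hat\psi\colon \Gamma(v,x_0)\to \Gamma(\psi(v),x_0)$ of the whole end-cones and then check that it preserves frontier points. Write $w=\psi(v)$ and $n=\|v\|_{x_0}=\|w\|_{x_0}$, and recall that, because $\psi$ is a norm-preserving isomorphism, $\psi(\Delta(v,x_0))=\Delta(w,x_0)$ and $\psi$ carries frontier points to frontier points (as observed just before Lemma~\ref{lem: external walk}). To define $\hat\psi$ on a vertex $z\in V(\Gamma(v,x_0))$, I would use connectedness to pick a walk $v\mapright{u}z$ inside $\Gamma(v,x_0)$ and transport its label by cutting the \emph{out-and-back} circuit $v\mapright{u}z\mapright{u^{-1}}v$ at each of its visits to $\Delta^C(v,x_0)$. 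Every resulting segment has both endpoints in $\Delta^C(v,x_0)$ and interior disjoint from it, hence is either a single edge of $\Delta^C(v,x_0)$ (transported by $\psi$) or a $C$-external walk (transported, with the same label and onto a $C$-external walk of $\Gamma(w,x_0)$, by Lemma~\ref{lem: external walk}). The key point is that even when $z\notin\Delta^C(v,x_0)$ the vertex $z$ lies in the \emph{interior} of one such external segment $b\mapright{}z\mapright{}b'$, so Lemma~\ref{lem: external walk} applies to it too. Concatenating the transported segments gives a walk $w\mapright{uu^{-1}}w$ in $\Gamma(w,x_0)$; by determinism (Lemma~\ref{lem: there are reduced walks}) it reads $u$ forward to a vertex $z'$ and retraces $u^{-1}$ back to $w$, and I set $\hat\psi(z)=z'$. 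Since each segment stays at norm $\ge n$ and the transported circuit starts at $w\in\Gamma(w,x_0)$, it remains in the component $\Gamma(w,x_0)$, so $z'\in\Gamma(w,x_0)$.

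For well-definedness, given two walks $v\mapright{u}z$ and $v\mapright{u'}z$ in $\Gamma(v,x_0)$, I would apply the same cutting to $v\mapright{u}z\mapright{u'^{-1}}v$: the excursion through $z$ is again a single external segment $b\mapright{}z\mapright{}b'$, which Lemma~\ref{lem: external walk} sends to a fixed external segment of $\Gamma(w,x_0)$; by determinism the interior vertex over $z$ is simultaneously the vertex reached by reading $u$ from $w$ and the one reached by reading $u'$ from $w$, so the two recipes return the same $z'$. Running the identical argument on walks that stay inside $\Delta^C(v,x_0)\cap\Gamma(v,x_0)$ shows $\hat\psi$ agrees with $\psi$ there; in particular $\hat\psi(v)=w$.

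To see $\hat\psi$ is a morphism of inverse graphs, take an edge $z_1\mapright{a}z_2$ of $\Gamma(v,x_0)$ and a walk $v\mapright{u}z_1$; transporting $v\mapright{ua}z_2\mapright{a^{-1}u^{-1}}v$ as above produces the edge $\hat\psi(z_1)\mapright{a}\hat\psi(z_2)$ in $\Gamma(w,x_0)$, so labels and incidences are respected. Since $\psi^{-1}\colon\Delta^C(w,x_0)\to\Delta^C(v,x_0)$ is again a norm-preserving isomorphism with $\psi^{-1}(w)=v$, the same construction yields a morphism $\widehat{\psi^{-1}}\colon\Gamma(w,x_0)\to\Gamma(v,x_0)$; the composite $\widehat{\psi^{-1}}\circ\hat\psi$ is a morphism of the inverse graph $\Gamma(v,x_0)$ fixing $v$, hence the identity by Lemma~\ref{lem: uniqueness of automorphism}, and symmetrically for $\hat\psi\circ\widehat{\psi^{-1}}$. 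Thus $\hat\psi$ is an isomorphism of inverse graphs, and $\hat\psi(\Delta(v,x_0))=\psi(\Delta(v,x_0))=\Delta(w,x_0)$, so $\hat\psi$ is an end-isomorphism and $\Gamma(v,x_0)$, $\Gamma(\psi(v),x_0)$ are end-isomorphic.

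The main obstacle I anticipate lies in the first two steps: legitimately transporting a walk whose endpoint $z$ lies outside the collar $\Delta^C(v,x_0)$, since Lemma~\ref{lem: external walk} only speaks of external walks with \emph{both} endpoints in $\Delta^C$. The device that overcomes this is to never transport a dangling segment on its own but to embed $z$ in the interior of an external excursion (the out-and-back circuit, or the through-$z$ segment), so that the collar isomorphism together with the determinism of $\Gamma$ pins down the image of $z$ unambiguously. Verifying that the transported images always remain inside $\Gamma(w,x_0)$ and that the different excursions glue consistently are precisely the points that will require the most care.
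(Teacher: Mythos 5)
Your proof is correct and follows essentially the same route as the paper: both decompose walks of the end-cone into collar pieces (transported by $\psi$) and $C$-external excursions (transported by Lemma~\ref{lem: external walk}), glue the transported pieces by determinism, and use norm-preservation to keep the images inside $\Gamma(\psi(v),x_0)$, exactly the content of the paper's property P). The only cosmetic differences are that you anchor every walk at $v$ via out-and-back circuits where the paper proves P) for walks between two arbitrary frontier points (by induction on the number of external subwalks) and defines the map via walks $v_1\mapright{u_1}y\mapright{u_2}v_2$, and that you get bijectivity from the symmetric construction together with Lemma~\ref{lem: uniqueness of automorphism} rather than from the ``vice versa'' half of P).
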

\begin{proof}
Before proving the proposition, we first show that the following property holds
\begin{itemize}
\item [P)] if there is a walk $p=v_1\mapright{w}v_2$ in $\Gamma(v,x_0)$ with $v_1,v_2\in \Delta(v,x_0)$, then there is also a walk $\psi(v_1)\mapright{w}\psi(v_2)$ in $\Gamma(\psi(v),x_0)$ with $\psi(v_1),\psi(v_2)\in \Delta(\psi(v),x_0)$. Vice versa, for any walk $z_1\mapright{w}z_2$ in $\Gamma(\psi(v),x_0)$ with $z_1,z_2\in \Delta(\psi(v),x_0)$, there is a walk $\psi^{-1}(z_1)\mapright{w}\psi^{-1}(z_2)$ in $\Gamma(v,x_0)$ with $\psi^{-1}(z_1),\psi^{-1}(z_2)\in \Delta(v,x_0)$.
\end{itemize}
Suppose $\mathcal{E}(p,C)=\{\pi_1,\ldots, \pi_k\}$ and factorize $p$ accordingly as
\begin{equation}\label{eq: walk external}
p=v_1\mapright{w}v_2=\gamma_1\pi_1\gamma_2\pi_2\ldots \gamma_{k}\pi_{k}\gamma_{k+1}
\end{equation} 
where $\gamma_i$ are walks contained in $\Delta^C(v,x_0)$, see again Fig.~\ref{fig: external paths} for a representation of such a factorization. We prove $P)$ by induction on the number $k$ of external walks. If we do not have external walks, then $p=\gamma_1$ is totally contained in $\Delta^C(v,x_0)$, thus $\psi(p)$ is totally contained in $\Delta^C(\psi(v),x_0)$ with $\psi(v_1), \psi(v_2)\in\Delta(\psi(v),x_0)$. In case $p=\gamma_1\pi_1\gamma_2$ has just one external walk $\pi_1=x'\mapright{u}x''$, by Lemma~\ref{lem: external walk} we conclude that $\psi(x')\mapright{u}\psi(x'')$ is also an external walk in $\Gamma(\psi(v),x_0)$. Since $\gamma_1,\gamma_2$ belong to $\Gamma(v,x_0)$ they are formed by vertices $x$ whose norm $\|x\|_{x_0}\ge \|v\|$, hence since $\psi$ is norm-reserving, we deduce that $\psi(\gamma_1), \psi(\gamma_2)$ are walks in $\Gamma(\psi(v),x_0)\cap \Delta^C(\psi(v),x_0)$ with $\iota(\psi(\gamma_1)), \tau(\psi(\gamma_2))\in \Delta(\psi(v),x_0)$. With a slight abuse of notation let us denote the path $\psi(x')\mapright{u}\psi(x'')$ by $\psi(\pi_1)$. By patching this walk together with $\psi(\gamma_1), \psi(\gamma_2)$, we conclude that $\psi(p)=\psi(\gamma_1)\psi(\pi_1)\psi(\gamma_2)$ is a walk in $\Gamma(\psi(v),x_0)$ with endpoints in $\Delta(\psi(v),x_0)$. We have just shown that in case $p$ has at most one external walk, property $P)$ holds. To conclude the proof by induction in case we have more than one external walk we split $p$ into two walks $p_1=\gamma_1\pi_1\delta_1^{-1}$, $p_2=\delta_1\gamma_2\pi_2\ldots \gamma_{k}\pi_{k}\gamma_{k+1}$ where $\delta_1$ is a walk in $\Gamma(v,x_0)\cap \Delta^C(v,x_0)$ with $\iota(\delta_1)\in\Delta(v,x_0)$, $\tau(\delta_1)=\iota(\gamma_2)$, and then apply induction to get that $\psi(p_1)=\psi(\gamma_1)\psi(\pi_1)\psi(\delta_1^{-1})$ and $\psi(p_2)=\psi(\delta_1)\psi(\pi_2\ldots \gamma_{k}\pi_{k}\gamma_{k+1})$ are walks in $\Gamma(\psi(v),x_0)$ with endpoints in $\Delta(\psi(v),x_0)$ and then by using the determinism of $\Gamma(\psi(v),x_0)$ we conclude that also $\psi(p)$ is a walk contained in $\Gamma(\psi(v),x_0)$. By repeating the same argument with $\psi^{-1}$ instead of $\psi$ it is possible to show the second part of property $P)$. 
\\
Let us now construct an end-isomorphism between $\Gamma(v,x_0)$ and $\Gamma(\psi(v),x_0)$. Firstly note that $\psi$ is a bijection between the frontier points $\Delta(v,x_0), \Delta(\psi(v),x_0)$, so we define $\varphi:\Gamma(v,x_0)\to \Gamma(\psi(v),x_0)$ to be equal to $\psi$ on $\Delta(v,x_0)$. Now, for any vertex $y\in \Gamma(v,x_0)$ let $v_1\mapright{u_1}y\mapright{u_2}v_2$ in $\Gamma(v,x_0)$ be a walk with $v_1,v_2\in\Delta(v,x_0)$. By property $P)$ there is walk $\psi(v_1)\mapright{u_1}z\mapright{u_1}\psi(v_2)$ in $\Gamma(\psi(v),x_0)$ with $\psi(v_1), \psi(v_2)\in\Delta(\psi(v),x_0)$. Now, define $\varphi$ by putting $\varphi(y)=z$. This is a well-defined morphism. Indeed, let $v_1'\mapright{s_1}y\mapright{s_2}v_2'$ be another walk for some $v_1', v_2' \in \Delta(v,x_0)$, then using property $P)$ applied to the walk $v_1\mapright{u_1}y\mapright{s_1^{-1}}v_1'$, we conclude that $\psi(v_1)\mapright{u_1}z\mapright{s_1^{-1}}\psi(v_1')$ is also a walk in $\Gamma(\psi(v),x_0)$, thus by the determinism of $\Gamma$ we get that $\varphi(y)$ does not depend on the chosen walk. It is now routine to check that $\varphi: \Gamma(v,x_0)\to\Gamma(\psi(v),x_0)$ is a morphism of inverse graphs that is a bijection by the second part of property $P)$. 
\end{proof}
We have the following characterization that extends and generalizes \cite[Theorem 4.6]{Ce-Wo} to the non-complete case. 
\begin{prop}\label{prop: equivalence context-free}
For an inverse graph $\Gamma$ the following conditions are equivalent:
\begin{enumerate}
\item for any non-empty finite set $F\subseteq V(\Gamma)$ the language $L(\Gamma, x_0,F)$ is context-free;
\item the language $L(\Gamma,x_0)$ is context-free;
\item the rooted graph $(\Gamma, x_0)$ is context-free in the sense of Muller-Schupp;
\item there exists an inverse PDA $M$ with initial state $q_0$ and initial stack symbol $\perp$ such that the rooted configurations graph $(\Gamma(M), (q_0,\perp))$ is isomorphic to $(\Gamma, x_0)$;
\item for any vertex $y_0\in V(\Gamma)$, the rooted graph $(\Gamma, y_0)$ is context-free in the sense of Muller-Schupp.
\end{enumerate}
\end{prop}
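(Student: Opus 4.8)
The plan is to establish the cycle $(2)\Rightarrow(3)\Rightarrow(4)\Rightarrow(2)$, to tie $(1)$ to it by an easy language-theoretic argument, and to deduce $(3)\Leftrightarrow(5)$ from a change-of-basepoint observation. The implication $(1)\Rightarrow(2)$ is immediate on taking $F=\{x_0\}$, since a walk from $x_0$ to $x_0$ is exactly a circuit based at $x_0$, so that $L(\Gamma,x_0,\{x_0\})=L(\Gamma,x_0)$. For $(2)\Rightarrow(1)$, fix $f\in F$ and, using connectedness, choose $u_f\in\wt{A}^*$ with $x_0\mapright{u_f}f$. By determinism (Lemma~\ref{lem: there are reduced walks}) a word $w$ labels a walk $x_0\mapright{w}f$ if and only if $wu_f^{-1}$ labels the circuit $x_0\mapright{w}f\mapright{u_f^{-1}}x_0$; hence $L(\Gamma,x_0,\{f\})=\{w: wu_f^{-1}\in L(\Gamma,x_0)\}$ is the right quotient of $L(\Gamma,x_0)$ by the single word $u_f^{-1}$. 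Since context-free languages are closed under right quotient by regular sets and $L(\Gamma,x_0,F)=\bigcup_{f\in F}L(\Gamma,x_0,\{f\})$ is a finite union, $(1)$ follows.

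For the crux $(2)\Rightarrow(3)$ I would fix a Chomsky-normal-form grammar $\mathcal{G}$ for $L(\Gamma,x_0)$, set $C=2K$, and show there are finitely many end-cones up to end-isomorphism. The finitely many vertices $v$ with $\|v\|_{x_0}\le K$ (the ball $D_K(x_0)$ is finite, as $\Gamma$ is deterministic over the finite alphabet $\wt{A}$, hence locally finite) give finitely many cones, so it suffices to treat $\|v\|_{x_0}>K$. The idea is to enrich the finite decorated neighbourhood $\Delta^C(v,x_0)$ with the distinguished vertex $v$ and the relative-norm labelling $z\mapsto \|z\|_{x_0}-\|v\|_{x_0}$. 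By Proposition~\ref{prop: frontier uniformly bounded} the frontier $\Delta(v,x_0)$ has diameter at most $3K$, so every vertex of $\Delta^C(v,x_0)$ lies in a $\Gamma$-ball of radius $C+3K$ about a fixed frontier point; local finiteness then bounds $|V(\Delta^C(v,x_0))|$ uniformly in $v$, and the relative-norm values range over a bounded set. Consequently there are only finitely many isomorphism types of these enriched $(\wt{A}\cup V)$-labelled structures. Any isomorphism of enriched structures is in particular a norm-preserving isomorphism $\psi$ with $\psi(v)=w$ (preserving the relative norm forces the monotonicity condition), so Proposition~\ref{prop: norm-preserving iso are end-isomorphic} produces an end-isomorphism $\Gamma(v,x_0)\to\Gamma(w,x_0)$. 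Thus finitely many enriched types yield finitely many end-cone types, i.e.\ $(\Gamma,x_0)$ is context-free in the sense of Muller--Schupp.

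For $(3)\Rightarrow(4)$ I would invoke the construction of \cite{muahu2}, which realises any context-free graph as the configurations graph of a pushdown automaton $M_0$ with $\Gamma(M_0)\cong(\Gamma,x_0)$; since $\Gamma$ has no $1$-labelled edges and is deterministic, $M_0$ may be taken real-time and deterministic. The heart of this step—and the main obstacle—is to upgrade $M_0$ to an \emph{inverse} PDA. Being inverse, $\Gamma$ is deterministic and therefore also co-deterministic, so every edge $v\mapright{a}w$ is matched by its inverse $w\mapright{a^{-1}}v$; one then defines $\delta^+$ from the transitions reading letters of $A$ and $\delta^-$ from those reading $A^{-1}$, and checks that the involution $e\mapsto e^{-1}$ forces $\delta^-$ to be the reverse of $\delta^+$. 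By the remark that $\Gamma(M)$ is inverse precisely when $M$ is an inverse PDA, the resulting $M$ is inverse with $\Gamma(M)\cong(\Gamma,x_0)$. For $(4)\Rightarrow(2)$, observe that $L(\Gamma,x_0)$ is exactly the set of words $w$ with $(q_0,\perp)\sdtstile{M}{w}(q_0,\perp)$, i.e.\ the computations returning to the initial configuration; this ``return-to-origin'' language is accepted by a PDA (one detects state $q_0$ together with stack equal to $\perp$ via an auxiliary bottom marker), and hence is context-free.

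Finally, $(5)\Rightarrow(3)$ is the special case $y_0=x_0$, and for $(3)\Rightarrow(5)$ I would note that the entire argument above is insensitive to the choice of root, so it suffices to prove that $L(\Gamma,x_0)$ context-free implies $L(\Gamma,y_0)$ context-free for every $y_0$. Choosing $u$ with $y_0\mapright{u}x_0$, determinism gives $w\in L(\Gamma,y_0)$ iff $u^{-1}wu\in L(\Gamma,x_0)$, so $L(\Gamma,y_0)=(u^{-1}\backslash L(\Gamma,x_0))/u$ is obtained from a context-free language by left and right quotients with single words, and is therefore context-free; applying $(2)\Rightarrow(3)$ at the root $y_0$ then yields $(5)$. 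I expect the only genuinely delicate point to be the reversibility bookkeeping in $(3)\Rightarrow(4)$, whereas the finiteness argument in $(2)\Rightarrow(3)$ rests entirely on Propositions~\ref{prop: frontier uniformly bounded} and~\ref{prop: norm-preserving iso are end-isomorphic} and becomes routine once the enrichment by relative norm is in place.
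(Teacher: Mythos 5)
Your argument is correct, and its core implications coincide with the paper's: $(1)\Rightarrow(2)$ by taking $F=\{x_0\}$, $(2)\Rightarrow(3)$ with the same constant $C\ge 2K$ and the same use of Propositions~\ref{prop: frontier uniformly bounded} and~\ref{prop: norm-preserving iso are end-isomorphic}, and $(3)\Rightarrow(4)$ via the construction of \cite[Lemma 2.3]{muahu2} together with the fact that a configurations graph is inverse exactly when the PDA is inverse. Your enrichment of $\Delta^C(v,x_0)$ by the distinguished vertex and the relative norm $\|z\|_{x_0}-\|v\|_{x_0}$ is not a deviation but a useful elaboration: it supplies the finiteness count that the paper asserts tersely ("finitely many decorated subgraphs up to norm-preserving isomorphism") and guarantees the hypothesis $\psi(v)=w$ required by Proposition~\ref{prop: norm-preserving iso are end-isomorphic}. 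Where you genuinely diverge is in closing the equivalences. The paper proves $(4)\Rightarrow(5)$ by citing \cite[Lemma 2.5]{muahu2} and $(3)\Rightarrow(1)$ by citing \cite[Theorem 4.2]{Ce-Wo}; you replace both external results by elementary language theory: $(4)\Rightarrow(2)$ via the return-to-origin language $\{w:(q_0,\perp)\sdtstile{M}{w}(q_0,\perp)\}$ (context-free by the standard equivalence of acceptance modes), $(2)\Rightarrow(1)$ via the quotient identity $L(\Gamma,x_0,\{f\})=\{w: wu_f^{-1}\in L(\Gamma,x_0)\}$, and $(3)\Rightarrow(5)$ via $L(\Gamma,y_0)=\{w: u^{-1}wu\in L(\Gamma,x_0)\}$ followed by rerunning $(2)\Rightarrow(3)$ at the root $y_0$. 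These identities hold precisely because $\Gamma$ is inverse---the involution together with determinism pins down the intermediate vertices of the decomposed walks---and the closure of context-free languages under quotient by a single word and under finite union is standard \cite{hop}. The paper's route is shorter, outsourcing basepoint-independence to the Muller--Schupp theory of configuration graphs; yours is more self-contained and makes transparent that, for inverse graphs, changing the basepoint or the endpoints alters the accepted language only by word quotients. One point of order: your $(3)\Rightarrow(5)$ presupposes $(3)\Rightarrow(2)$, which you indeed possess through $(3)\Rightarrow(4)\Rightarrow(2)$, so the overall logical structure is sound.
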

\begin{proof}
$(1)\Rightarrow (2)$. It follows by taking $F=\{x_0\}$.
\\
 $(2)\Rightarrow (3)$. For a fixed constant $C\ge 2K$, up to norm-preserving isomorphism, there are finitely many decorated subgraphs $\Delta^C(v,x_0)$ of $\Gamma_{\mathcal{G}}$. Since there a finitely many end-cones $\Gamma(v,x_0)$ with $\|v\|_{x_0}\le K$, then by Proposition~\ref{prop: norm-preserving iso are end-isomorphic} we deduce that there are finitely many end-cones up to end-isomorphism. Hence, the graph $(\Gamma, x_0)$ is context-free in the sense of Muller-Schupp.
 \\
$(3)\Rightarrow (4)$. In the proof of \cite[Lemma 2.3]{muahu2} the authors explain a way to associate a (canonical) pushdown automaton $M$ to the context-free graph $(\Gamma, x_0)$ in such a way that the graph of transitions $\Gamma(M)$ rooted at the initial configuration $(q_0, \perp)$ is isomorphic (as rooted $\wt{A}$-directed graphs) to $(\Gamma, x_0)$. Since $\Gamma$ is inverse we have that also $\Gamma(M)$ is inverse, hence $M$ is an inverse PDA. 
\\
$(4)\Rightarrow (5)$. It follows from \cite[Lemma 2.5]{muahu2}.
\\
$(5)\Rightarrow (3)$. Trivial. 
\\
$(3)\Rightarrow (1)$. Since \cite[Theorem 4.2]{Ce-Wo} holds for any connected involutive graph that is context-free in the sense of Muller-Schupp, we have that for any $y\in F$ the language $L(\Gamma, x_0,y)$ is context-free. Since the (finite) union of context-free languages is also context-free \cite{hop}, we conclude that
\[
L(\Gamma, x_0, F)=\bigcup_{y\in F}L(\Gamma, x_0,y)
\]
is also context-free.
\end{proof}
This characterization settles the name context-free for an inverse graph since such a graph can be either seen (rooted at any vertex) as an acceptor of a context-free language or as a configurations graph of an inverse PDA or, more geometrically, being context-free in the sense of Muller-Schupp.


\section{Context-free inverse graphs are quasi-isometric to a tree}\label{sec: geometrical considerations}

We now show that context-free inverse graphs have a tree-like shape in the sense that they are quasi-isometric to a tree. We recall that a quasi-isometry between the two metric spaces $(X_1,d_1), (X_2,d_2)$ is a map $\psi:X_1\to X_2$ for which there are positive constants $k,K,c,C$ such that for all $x, y\in X_1$ the following inequalities 
\[
kd_1(x,y)-c\le d_2(\psi(x),\psi(y))\le K d_1(x,y)+C
\]
hold, and $\psi$ is almost surjective, in the sense that there exists a constant $N$ such that for $z\in X_2$ there is some $x\in X_1$ for which $d(\psi(x),z)\le N$ holds. The idea of tree-like may be reformulated using several other notions, see for instance \cite[Proposition 3.1]{Pedro Nora}, \cite[Theorem 4.7]{Antolin}. We briefly recall some of them and we add another equivalence that will be useful later. 
\\
A geodesic polygon $P$ of $\Gamma$ is a sequence $P=p_1\ldots p_n$, $n\ge 2$, of geodesics walks with $\iota(p_{i+1})=\tau(p_i)$ for $1\le i\le n-1$, and $\iota(p_1)=\tau(p_n)$. Each walk $p_i$ forms one side of $P$. Let $\delta>0$ be a constant, we say that $P$ is $\delta$-thin if for all $i\in [1,n]$, and for all vertices $v\in p_i$, there exists a vertex $w\in p_j$ for some $j\neq i$ such that $d(w,v)\le \delta$. In case all geodesic polygons of $\Gamma$ are $\delta$-thin, we say that $\Gamma$ is polygon $\delta$-hyperbolic, or simply polygon hyperbolic. This notion is a special case of Rips condition for $\delta$-hyperbolic spaces. 
\\
For a graph $\Gamma$ and a partition $\mathcal{P}$ of the vertex set $V(\Gamma)$, we may define the undirected graph $\Gamma/\mathcal{P}$ as the simple graph having as set of vertices $\mathcal{P}$ and two different subsets $S_1,S_2\in \mathcal{P}$ are connected by an edge if there are $u_1\in S_1, u_2\in S_2$ and an edge $u_1\mapright{} u_2\in \Gamma$. In case there is some integer $N$ such that $\delta_{\Gamma}(S)\le N$ for all $S\in \mathcal{P}$, we say that $\mathcal{P}$ has \emph{uniform diameter}. A partition $\mathcal{P}$ such that $\Gamma/\mathcal{P}$ is a tree, is called a \emph{strong tree decomposition}. 
The \emph{cone} of a vertex $v$ with respect to $x_0$ is the induced subgraph obtained by all the vertices $x$ that are in any geodesic connecting $x_0$ with $x$ and containing $v$, i.e., 
\[
C(v,x_0)=\left\la x\in V(\Gamma): \|x\|_{x_0}=\|v\|_{x_0}+d(v,x)\right\ra_{\Gamma}
\]
A central notion for simple graphs is that of \emph{tree decomposition}, see for instance \cite{Diestel}. For us a tree decomposition of $\Gamma$ is a pair  $(T,\mathcal{V})$ where $T$ is a simple graph that is a tree, and $\mathcal{V}=\{V_t\}_{t\in T}$ is a collection of subsets $V_t\subseteq V(\Gamma)$ indexed by the vertices of $T$ satisfying the following properties:
\begin{itemize}
\item[T1)]  $\bigcup_{t\in T}V_t=V(\Gamma)$;
\item[T2)] for every edge $e\in E(\Gamma)$, there is some $t\in T$ such that $\iota(e), \tau(e)\in V_t$;
\item[T3)] if $V_t$ and $V_s$ both contain a vertex $v$, then all nodes $V_r$ of the tree in the (unique) path between $V_t$ and $V_s$ contain $v$ as well.
\end{itemize}
If there is some uniform constant $N\ge 1$ such that $\delta_{\Gamma}(V_t)\le N$ for all $t\in T$, we say that $(T,\mathcal{V})$ is a tree decomposition with \emph{uniform diameter}. Since $\Gamma$ is an inverse $\wt{A}$-graph with $A$ finite, the out-degree of each vertex is finite, so any subgraph with finite diameter is finite, thus the condition that $(T,\mathcal{V})$ has uniform diameter, implies that each subset $V_t$ is uniformly bounded by some constant $C$. Hence, in the notation of  \cite{Armin}, $(T,\mathcal{V})$ is a tree decomposition of treewidth $C$.
The following proposition characterizes tree-like inverse graphs in terms of all the previous definitions. 
\begin{prop}\label{prop: charact tree-like}
For a connected inverse graph $\Gamma$, the following conditions are equivalent:
\begin{enumerate}
\item $\Gamma$ is quasi-isometric to a tree;
\item $\Gamma$ is polygon hyperbolic;
\item there is a strong tree decomposition $\mathcal{P}$ of uniform diameter; 
\item there exists a constant $\delta\ge 1$ such that if $x_0\in V(\Gamma)\setminus D_{\delta}(v)$ and $x\in C(v,x_0)$, then there is no walk in $\Gamma\setminus D_{\delta}(v)$ connecting $x_0$ to $x$;
\item $\Gamma$ has a tree decomposition with uniform diameter;
\end{enumerate}
\end{prop}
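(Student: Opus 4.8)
The plan is to prove the four ``classical'' reformulations $(1)$, $(2)$, $(3)$, $(5)$ equivalent by quoting the known characterizations of quasi-trees, and then to fold the new cone condition $(4)$ into the circle through the bottleneck property. For the classical cluster I would rely on \cite[Proposition 3.1]{Pedro Nora}, \cite[Theorem 4.7]{Antolin} and \cite{Diestel} for $(1)\Leftrightarrow(2)$ and $(1)\Leftrightarrow(5)$, supplying one-line sketches where convenient: polygon $\delta$-thinness is preserved under quasi-isometry to a tree because geodesics map to quasi-geodesics, which in a tree fellow-travel genuine geodesics within a uniform constant. The equivalence $(3)\Leftrightarrow(5)$ is a de-overlapping argument: a partition is a special tree decomposition, and conversely, given $(T,\mathcal{V})$ of uniform diameter, one roots $T$ and assigns each vertex of $\Gamma$ to the bag closest to the root among those containing it, which by property~T3) produces a partition $\mathcal{P}$ with $\Gamma/\mathcal{P}$ a tree. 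Finally $(3)\Rightarrow(1)$ is direct: the class map $\phi\colon V(\Gamma)\to V(\Gamma/\mathcal{P})$ is surjective, $d_{\Gamma/\mathcal{P}}(\phi(u),\phi(v))\le d_\Gamma(u,v)$, and since each class has diameter $\le N$ and $\Gamma$ has bounded degree one also gets $d_\Gamma(u,v)\le (N+1)\,d_{\Gamma/\mathcal{P}}(\phi(u),\phi(v))+N$, so $\phi$ is a quasi-isometry onto the tree $\Gamma/\mathcal{P}$.

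The real content is the equivalence of $(4)$ with the rest, which I would establish via $(2)\Rightarrow(4)$ and $(4)\Rightarrow(1)$. For $(2)\Rightarrow(4)$, assume every geodesic polygon is $\delta$-thin and keep the same $\delta$ in $(4)$. Let $v$ be a vertex, $x_0$ a vertex with $d(x_0,v)>\delta$, and $x\in C(v,x_0)$; by definition of the cone there is a geodesic $g$ from $x_0$ to $x$ passing through $v$. Suppose toward a contradiction that some walk $q=e_1\cdots e_m$ from $x_0$ to $x$ avoids $D_\delta(v)$. Then the closed walk given by $g$ followed by $q$ reversed is a geodesic polygon whose sides are $g$ together with the individual edges $e_1,\dots,e_m$, each a geodesic of length one. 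By $\delta$-thinness the vertex $v$, lying on the side $g$, is within distance $\delta$ of a vertex on one of the remaining sides, hence of a vertex of $q$; this contradicts that $q$ avoids $D_\delta(v)$. Thus $(4)$ holds with the same constant $\delta$.

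For $(4)\Rightarrow(1)$ I would recognize $(4)$ as a one-sided instance of Manning's bottleneck property and feed it into the bottleneck criterion for quasi-trees (Manning; see also \cite[Theorem 4.7]{Antolin}, \cite[Proposition 3.1]{Pedro Nora}). Given vertices $a,b$, choose a vertex $m$ on a geodesic from $a$ to $b$ that is a (coarse) midpoint, so $d(a,m)=d(m,b)\pm\tfrac12$; since that geodesic realizes $\|b\|_a=\|m\|_a+d(m,b)$, we have $b\in C(m,a)$. If $d(a,m)>\delta$, then $(4)$ applied to $v=m$, $x_0=a$, $x=b$ says exactly that every walk from $a$ to $b$ meets $D_\delta(m)$; and if $d(a,m)\le\delta$ this holds trivially because $a\in D_\delta(m)$. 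Hence the bottleneck property holds with constant $\delta$, so $\Gamma$ is quasi-isometric to a tree, which is $(1)$. Combined with the first paragraph this closes all the equivalences.

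The main obstacle I anticipate is making the $(4)\Rightarrow(1)$ step rigorous: the bottleneck criterion is a statement about \emph{geodesic metric spaces}, so I must either pass to the geometric realization of $\Gamma$ or argue directly with coarse midpoints, absorbing the $\pm\tfrac12$ from odd distances into the bottleneck constant, and I must check that the one-sided cone condition really delivers the two-point bottleneck for every pair (the cone membership $b\in C(m,a)$ is the pivotal observation). A secondary point, already implicit in the first paragraph, is that the conversions $(3)\Rightarrow(1)$ and $(5)\Rightarrow(3)$ genuinely use that $A$ is finite, so that uniformly bounded diameter forces uniformly bounded bag size; without bounded degree these reformulations would fail.
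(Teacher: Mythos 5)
Your handling of conditions (1)--(4) is essentially sound: the polygon argument for $(2)\Rightarrow(4)$ and the coarse-midpoint/bottleneck argument for $(4)\Rightarrow(1)$ are correct (they in fact re-derive parts of \cite[Proposition 3.1]{Pedro Nora}, which the paper simply cites for the equivalence of (1)--(4)), and your direct quasi-isometry argument for $(3)\Rightarrow(1)$ works. The genuine gap concerns condition (5): as written, you have no valid implication leading \emph{out} of (5). First, the appeal to \cite[Proposition 3.1]{Pedro Nora}, \cite[Theorem 4.7]{Antolin} or \cite{Diestel} for $(1)\Leftrightarrow(5)$ is unsupported: those sources cover only (1)--(4); the equivalence with uniform-diameter tree decompositions is precisely the new content of this proposition (it is what the paper later needs to invoke the treewidth result of Diekert--Wei{\ss}). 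Second, your explicit de-overlapping construction for $(5)\Rightarrow(3)$ is false. Counterexample: let $\Gamma$ be a triangle on vertices $a,b,c$ and take the tree decomposition on the path $t_1 - t_2 - t_3$ with bags $V_{t_1}=\{a\}$, $V_{t_2}=\{a,b\}$, $V_{t_3}=\{a,b,c\}$; all axioms T1)--T3) hold and every bag has diameter at most $1$. Rooting $T$ at $t_1$, your rule (assign each vertex to the bag closest to the root among those containing it) produces the partition $\{a\},\{b\},\{c\}$, whose quotient graph is a triangle, not a tree. Adjacency in the quotient only forces the two chosen bags to be comparable in the rooted tree, and that does not exclude cycles. (A smaller, fixable, inaccuracy occurs in your $(3)\Rightarrow(5)$: the classes of a strong tree decomposition do not satisfy T2), since an edge joining two different classes lies in no class; the paper repairs this by augmenting each class $S_t$ with the adjacent vertices.)

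Because of this, your cycle of implications never returns from (5): you get into (5) from (3), but nothing of the form $(5)\Rightarrow(\cdot)$ survives scrutiny, so the proposal proves only that (5) is implied by the other conditions, not that it implies them. The paper closes the loop by proving $(5)\Rightarrow(2)$ directly: given a tree decomposition $(T,\{V_t\}_{t\in T})$ of uniform diameter $N$, every geodesic polygon is shown to be $2N$-thin by considering, for each vertex $v$, the subtree $T(v)$ of bags containing $v$, the subtrees swept out by a side $p_i$ and by the complementary path $\pi$, and comparing them along geodesics of $T$; this is the substantive new argument your proposal is missing. You would need either that argument or a genuinely correct combinatorial conversion of a bounded-diameter tree decomposition into a strong one --- which is considerably more delicate than the root-assignment you propose.
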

\begin{proof}
Conditions $(1)-(4)$ are equivalent by \cite[Proposition 3.1]{Pedro Nora}. \\
$(3)\Rightarrow (5)$. Take $T=\Gamma/\mathcal{P}$ and for $t\in T$, let $S_t$ be the corresponding subset of vertices. Clearly property T1) is already satisfied, however, to satisfy T2) it is enough to add to each $S_t\in\mathcal{P}$ the adjacent vertices, so we augment each $S_t$ by considering 
\[
S'_t=S_t\cup \{v: \mbox{ there is an edge }e\in E(\Gamma)\mbox{ with }v=\tau(e),\mbox{ and } \iota(e)\in S_t\}
\]
in this way properties T1) and T2) are both satisfied. Furthermore, for any two adjacent vertices $t_1, t_2\in T$, the set $S'_{t_1}\cap S'_{t_2}$ is formed by pairs of vertices $v_1,v_2$ such that there is an edge $v_1\mapright{}v_2$ connecting $v_1\in S_{t_1}$ with $v_2\in S_{t_2}$. Hence, the only vertices $r\in T$ such that $S'_{t_1}\cap S'_{t_2}\subseteq S'_r$ are $r=t_1,t_2$, so T3) is satisfied, and this concludes the proof that $(T,\{S'_t\}_{t\in T})$ is a tree decomposition. Now, it is straightforward to verify that $\delta_{\Gamma}(S_t)\le N$ for all $S\in \mathcal{P}$ implies that $\delta_{\Gamma}(S_t)\le N+2$, hence $(T,\{S'_t\}_{t\in T})$ is a tree decomposition with uniform diameter. 
\\
$(5)\Rightarrow (2)$. Let $(T,\{V_t\}_{t\in T})$ be a tree decomposition with uniform diameter $N$ and let $P=p_1,\ldots, p_n$ be a geodesic polygon of $\Gamma$. Let us show that $P$ is $2N$-thin, so let  $v\in p_i$ be a fixed vertex, and let us show that the distance of $v$ from the rest of the other walks is at most $2N$. For each vertex $v$ of the walk $p_i$, let $T(v)=\{t\in T: v\in V_t\}$. By condition T3),  $T(v)$ has the following closure property: for any $x,y\in T(v)$ all the vertices in $T$ in the geodesic connecting $x$ to $y$ are contained in $T(v)$, hence $T(v)$ may be visualized as a subtree of $T$. Now, note that if $v,w$ are two adjacent vertices of $\Gamma$, $T(v)\cap T(w)\neq \emptyset$, thus the walk $p_i$ induces a subtree $T(p_i)$ of $T$ obtained by taking the union of $T(v)$ for each $v\in p_i$. Let $\pi=p_{i+1}\ldots p_np_1\ldots p_{i-1}$ be the walk obtained by composing all the walks of $P$ except $p_i$. Since $\pi$ and $p_i$ have the same endpoints $x=\iota(p_i)=\tau(\pi)$, $y=\iota(\pi)=\tau(p_i)$, then $T(\pi)$ and $T(p_i)$ are two subtrees of $T$ containing both $T(x), T(y)$. It is not difficult to show that we may choose a walk $q=t_1\ldots t_s\ldots t_k$ in $T(p_i)$ that is compatible with $p_i$ in the following sense: there is a map $f$ from the vertices of $q$ to the vertices of $p_i$ such that $f(t_1)=x\in V_{t_1}$, $f(t_k)=y\in V_{t_k}$, and if $t_s, t_{s+1}$ are two consecutive vertices of $q$
with $f(t_s)\in V_{t_s}$, then $f(t_{s+1})\in V_{t_{s+1}}$ and either $f(t_{s+1})=f(t_s)$, or there is an edge $f(t_{s+1})\mapright{}f(t_s)$ belonging to $p_i$. Note that $f$ is surjective, and let $t_j\in q$ be a vertex with $f(t_j)=v\in V_{t_j}$. Since $T$ is a tree, $q$ contains (as a subgraph) the geodesic $q'$ connecting $t_1$ with $t_k$ and this is also contained in $T(\pi)$ since it contains also the vertices $t_1,t_k$. We have two cases: either $t_j$ belongs to $q'$, or it does not. In the first case since $q'$ is contained in both $T(p_i)$ and $T(\pi)$ we deduce that there is a vertex $w\in \pi$ such that $w\in V_{t_j}$, and since $v$ is also contained in $V_{t_j}$ and $\delta_{\Gamma}(V_{t_j})\le N$ we deduce that $d(v,w)\le N$, so the distance of $v\in p_i$ from $\pi$ is at most $N$. In case $t_j$ is not contained in the geodesic $q'$, since $T$ is a tree, $q$ must contain a circuit $t'\ldots  t_j\ldots t'$ where $t'$ belongs to the geodesic $q'$ and the image of the circuit $t'\ldots  t_j\ldots t'$ through $f$ is a subwalk $v_1\mapright{}\ldots v\mapright{}\ldots v_2 $ of $p_i$ with $v_1,v_2\in V_{t'}$. Since $p_i$ is a geodesic, then also the subwalk $v_1\mapright{}\ldots v\mapright{}\ldots v_2 $ is a geodesic, hence $d(v_1,v)\le d(v_1,v_2)\le N$ since $v_1,v_2\in V_{t'}$. Since $t'$ is in $q'$ by the same argument as before, we may find $w\in \pi$ with $w\in V_{t'}$. Hence, we deduce $d(v_1,w)\le N$ which together with $d(v_1,v)\le N$ yields $d(v,w)\le 2N$ and this concludes the proof. 
\end{proof}
Context-free inverse graphs share all the properties stated in Proposition~\ref{prop: charact tree-like}, this is contained in the following proposition. 
\begin{prop}\label{prop: context-free implies tree-like}
Let $(\Gamma,x_0)$ be a rooted inverse graph that is context-free, then $\Gamma$ is quasi-isometric to a tree.
\end{prop}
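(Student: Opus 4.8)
The plan is to establish condition $(4)$ of Proposition~\ref{prop: charact tree-like}, since the equivalences proved there then immediately yield that $\Gamma$ is quasi-isometric to a tree. I would take the separation constant to be $\delta = 3K$, with $K$ the constant introduced above; note $K \ge 1$ because every useful variable of $\mathcal{G}$ derives a non-empty terminal word, so $\delta \ge 1$ as required by the statement. The goal then becomes: for $v$ with $\|v\|_{x_0} > 3K$ and any $x \in C(v,x_0)$, show that $x_0$ and $x$ lie in different components of $\Gamma \setminus D_{3K}(v)$.

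The proof would rest on two preliminary facts. First, I would observe that the frontier $\Delta(v,x_0)$ sits inside the disk $D_{3K}(v)$: since $v$ itself is a frontier point and $\delta_{\Gamma}(\Delta(v,x_0)) \le 3K$ by Proposition~\ref{prop: frontier uniformly bounded}, every point of $\Delta(v,x_0)$ is within distance $3K$ of $v$. Second, I would check the inclusion $C(v,x_0) \subseteq \Gamma(v,x_0)$: a point $x$ of the cone lies on a geodesic through $v$, whose terminal segment from $v$ to $x$ consists of vertices of norm $\ge \|v\|_{x_0}$, hence is a connected subgraph of $\Gamma \setminus D_{\|v\|_{x_0}-1}(x_0)$ meeting $v$ and therefore lying in the end-cone $\Gamma(v,x_0)$.

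With these in place, the core step is a first-entry argument: any walk from $x_0$ into the end-cone $\Gamma(v,x_0)$ must make its first entry at a frontier vertex of $\Delta(v,x_0)$. I would argue that the last vertex before entering $\Gamma(v,x_0)$ has norm at most $\|v\|_{x_0}-1$ (otherwise, being adjacent to a point of the end-cone and itself lying outside $D_{\|v\|_{x_0}-1}(x_0)$, it would belong to the same component), which pins the entry vertex to norm exactly $\|v\|_{x_0}$, i.e.\ to $\Delta(v,x_0) \subseteq D_{3K}(v)$. Since $x \in C(v,x_0) \subseteq \Gamma(v,x_0)$ while $x_0 \notin \Gamma(v,x_0)$, every $x_0$--$x$ walk must cross $D_{3K}(v)$ and so cannot survive in $\Gamma \setminus D_{3K}(v)$; this is precisely condition $(4)$. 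The one point demanding care is this first-entry step, but it is routine given the connected-component description of end-cones; all the genuinely quantitative input --- the uniform bound $3K$ on frontier diameters --- is already supplied by Proposition~\ref{prop: frontier uniformly bounded}, so I do not expect a substantial obstacle.
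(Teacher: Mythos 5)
Your proposal is correct and takes essentially the same approach as the paper's proof: both verify condition $(4)$ of Proposition~\ref{prop: charact tree-like} with $\delta=3K$, establish the inclusion $C(v,x_0)\subseteq \Gamma(v,x_0)$ via a geodesic through $v$, and invoke Proposition~\ref{prop: frontier uniformly bounded} to conclude that every walk from $x_0$ to a cone vertex must pass through $\Delta(v,x_0)\subseteq D_{3K}(v)$. Your explicit first-entry argument merely fills in a step the paper asserts without detail.
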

\begin{proof}
We show that condition $(4)$ of Proposition~\ref{prop: charact tree-like} holds by taking $\delta=3K$. Now, note that $C(v,x_0)$ is a subgraph of $\Gamma(v,x_0)$ since by definition there is a geodesic
\[
x_0=y_1\mapright{}y_2\mapright{}\ldots\mapright{}y_i\mapright{}\ldots\mapright{}y_k=x
\]
with $y_i=v$, for some $i$, moreover all the vertices $y_{j}$ with $j\ge i$ have the property $\|y_j\|_{x_0}\ge \|y_i\|_{x_0}= \|v\|_{x_0}$, and so $x$ is connected to $v$ by a walk contained in $\Gamma(v,x_0)$, i.e., $x\in \Gamma(v,x_0)$. Therefore, any walk connecting $x_0$ to some vertex $x\in C(v,x_0)$ passes from a frontier point in $\Delta(v,x_0)$. By Proposition~\ref{prop: frontier uniformly bounded} $\Delta(v,x_0)\subseteq V(D_{\delta}(v))$, which implies that there is no walk in $\Gamma\setminus D_{\delta}(v)$ connecting $x_0$ to $x$.
\end{proof}

\subsection{A detour to inverse monoids and Sch\"utzenberger graphs}
Sch\"{u}tzenberger graphs are the natural generalization of Cayley graphs in the area of inverse semigroup theory. We know that for Cayley graphs being context-free and tree-like are equivalent properties (since they are transitive). Proposition~\ref{prop: equivalence context-free} in combination with \cite[Theorem 4.13]{Pedro Nora} shows that the same property holds in the finitely presented case, answering to the open problem \cite[Question 6.2]{Pedro Nora}. 
\begin{theorem}\label{theo: schutz with context-free are tree-like}
Let $M = Inv \la A\mid \mathcal{R}\ra$ be a finitely presented inverse monoid, and let $w\in\wt{A}^*$ be a word. Then, the Sch\"utzenberger graph of $w$ is tree-like if and only if the language accepted by the Sch\"utzenberger automaton of $w$ is context-free. 
\end{theorem}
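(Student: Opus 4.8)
The plan is to read the Sch\"utzenberger automaton of $w$ as a rooted inverse graph and then assemble the equivalence from Proposition~\ref{prop: context-free implies tree-like}, which supplies the direction established in this paper, together with \cite[Theorem 4.13]{Pedro Nora}, which supplies the opposite one; Proposition~\ref{prop: equivalence context-free} serves as the dictionary guaranteeing that ``context-free'' means the same thing whether it is phrased via languages or via the Muller--Schupp end-cone condition. First I would fix notation: let $S\Gamma(w)$ denote the Sch\"utzenberger graph of $w$, which is a connected deterministic involutive $\wt{A}$-graph and hence an inverse graph, and let $[w]$ be the vertex serving as the distinguished root of the associated Sch\"utzenberger automaton, so that its accepted language is $L(S\Gamma(w),[w])$. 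By the very definition of a context-free rooted inverse graph, the assertion that this language is context-free is identical to the assertion that $(S\Gamma(w),[w])$ is context-free.

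For the implication ``$L(S\Gamma(w),[w])$ context-free $\Rightarrow$ $S\Gamma(w)$ tree-like'' I would apply Proposition~\ref{prop: context-free implies tree-like} directly. The essential point is that this proposition was proved for an \emph{arbitrary} rooted inverse graph, with no quasi-transitivity or finiteness hypothesis, so it applies to $S\Gamma(w)$ even though Sch\"utzenberger graphs need not be quasi-transitive; it returns exactly that $S\Gamma(w)$ is quasi-isometric to a tree. This is the half of the theorem that is new here, and it is precisely the converse of \cite[Theorem 4.13]{Pedro Nora}, answering \cite[Question 6.2]{Pedro Nora}.

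For the reverse implication ``$S\Gamma(w)$ tree-like $\Rightarrow$ $L(S\Gamma(w),[w])$ context-free'' I would invoke \cite[Theorem 4.13]{Pedro Nora}, which is available exactly because $M$ is finitely presented; should the cited statement be phrased in terms of the Muller--Schupp end-cone condition rather than of languages, the equivalences $(2)\Leftrightarrow(3)\Leftrightarrow(5)$ of Proposition~\ref{prop: equivalence context-free} translate it into context-freeness of $L(S\Gamma(w),[w])$. The finite presentation of $M$ is the crucial hypothesis here and cannot be dropped, whereas the finite case (a finite $\mathcal{R}$-class) is trivial, since a finite inverse graph is vacuously quasi-isometric to a tree and its accepted language is regular.

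I do not expect a substantial obstacle beyond correctly matching notions, since the only genuinely nontrivial geometric input, Proposition~\ref{prop: context-free implies tree-like}, has already been established. The delicate point to verify is that tree-likeness is an invariant of the unrooted graph $S\Gamma(w)$ whereas context-freeness is stated relative to the base point $[w]$; the compatibility of the two sides of the biconditional is exactly what the root-independence contained in Proposition~\ref{prop: equivalence context-free} (conditions $(2)$, $(3)$ and $(5)$) guarantees, so the choice of root in the Sch\"utzenberger automaton is immaterial and the statement is well posed.
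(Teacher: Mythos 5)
Your two ingredients and their roles are exactly those of the paper's proof: \cite[Theorem 4.13]{Pedro Nora} supplies ``tree-like $\Rightarrow$ context-free'', and the new converse is obtained by feeding Proposition~\ref{prop: equivalence context-free} into Proposition~\ref{prop: context-free implies tree-like}; you are also right that the latter proposition needs no quasi-transitivity, and that root-independence is covered by conditions $(2)$, $(3)$, $(5)$ of Proposition~\ref{prop: equivalence context-free}. So the architecture matches the paper.

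There is, however, a genuine gap in your setup. The Sch\"utzenberger automaton of $w$ is not a rooted inverse graph but a \emph{birooted} one, $(\Gamma, x_0, \{y_0\})$: the initial vertex $x_0$ corresponds to $ww^{-1}$ and the terminal vertex $y_0$ to $w$, and these are distinct unless $w$ represents an idempotent of $M$. Hence ``the language accepted by the Sch\"utzenberger automaton'' in the theorem is $L(\Gamma, x_0, \{y_0\})$, the set of labels of walks from $x_0$ to $y_0$, and not the circuit language $L(S\Gamma(w),[w])$ on which your argument runs. For the free inverse monoid and $w=a$, the automaton is the single edge $x_0 \mapright{a} y_0$; its accepted language contains $a$ and $aa^{-1}a$, neither of which lies in the circuit language at $[a]$. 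So your claim that the hypothesis is ``by the very definition'' the context-freeness of the rooted graph $(S\Gamma(w),[w])$ is false, and as written you prove the equivalence for a different language. The repair is short: by Stephen's theory $w$ itself labels a walk $x_0 \mapright{w} y_0$, and determinism of inverse graphs (applied also to inverse walks) gives $u \in L(\Gamma, x_0, \{y_0\}) \Leftrightarrow uw^{-1} \in L(\Gamma, x_0)$ and $v \in L(\Gamma, x_0) \Leftrightarrow vw \in L(\Gamma, x_0, \{y_0\})$. Thus each of the two languages is a right quotient of the other by a single word, an operation under which context-free languages are closed, so the two context-freeness assertions are equivalent. With this bridge in place your argument becomes exactly the paper's proof, which phrases the same step as an appeal to Proposition~\ref{prop: equivalence context-free} via its condition $(1)$ on languages $L(\Gamma, x_0, F)$ with a set of final states.
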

\begin{proof}
We refer the reader to \cite{Pedro Nora, Steph} for more details on Sch\"utzenberger graphs, for our purpose it is enough to know that a Sch\"utzenberger automata $\mathcal{A}(A,\mathcal{R}, w)$ with respect to the presentation $\la A\mid \mathcal{R}\ra$ and the word $w\in \wt{A}$ is a birooted inverse graph $(\Gamma, x_0, \{y_0\})$. One direction of the statement is  \cite[Theorem 4.13]{Pedro Nora}. For the other direction, if the accepted language $L(\Gamma, x_0, \{y_0\})$ is context-free, then by Proposition~\ref{prop: equivalence context-free} $\Gamma$ is context-free, and so by Proposition~\ref{prop: context-free implies tree-like} it is also quasi-isometric to a tree.
\end{proof}
Since for groups the previous equivalence holds for any finitely generated group, the natural question arises as to whether we may extend the previous theorem to a generic finitely generated inverse monoid. We now show that this is not the case, and the counterexample is contained in \cite[Example 4.4]{Pedro Nora}. We first need the following fact.
\begin{prop}
Let $(\Gamma,x_0)$ be a rooted inverse graph that is context-free. Then the language of the geodesics rooted at a vertex $x_0$, that is the set
\[
\Geo(\Gamma, x_0)=\{w\in\wt{A}^*: x_0\mapright{w}x\mbox{ is a geodesic}\}
\]
is a regular language. 
\end{prop}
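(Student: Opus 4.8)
The plan is to invoke the Myhill--Nerode theorem: since $\Geo(\Gamma,x_0)\subseteq\wt{A}^*$, it is regular as soon as it admits only finitely many left quotients $w^{-1}\Geo(\Gamma,x_0)=\{z:wz\in\Geo(\Gamma,x_0)\}$. The starting observation is that a walk $x_0\mapright{w}x$ is a geodesic if and only if each of its prefixes is, i.e. if and only if the norm increases by exactly one along every edge: for every prefix $x_0\mapright{a_1\cdots a_i}v_i$ one has $\|v_i\|_{x_0}=i$. In particular $\Geo(\Gamma,x_0)$ is prefix-closed, and each of its nonempty left quotients is again prefix-closed. I would then identify the quotients. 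If $w\notin\Geo(\Gamma,x_0)$ — because $w$ is unreadable from $x_0$, or labels a non-geodesic walk — then no $wz$ can be geodesic and $w^{-1}\Geo(\Gamma,x_0)=\emptyset$. If instead $w$ labels a geodesic $x_0\mapright{w}v$, then $wz$ is geodesic precisely when $z$ prolongs the geodesic, so the quotient equals
$$
G(v)=\{z\in\wt{A}^*: v\mapright{z}y\text{ exists and }\|y\|_{x_0}=\|v\|_{x_0}+|z|\},
$$
a set depending only on the endpoint $v$, not on the geodesic $w$ reaching it. Thus it suffices to prove that $\{G(v): v\text{ geodesically reachable from }x_0\}$ is finite.

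The heart of the argument is that $G(v)$ is an \emph{intrinsic invariant of the pointed end-cone} $(\Gamma(v,x_0),v)$. Writing $n=\|v\|_{x_0}$ (so $v\in\Delta(v,x_0)$), I would first note that any geodesic continuation $v\mapright{z}y$ stays inside $\Gamma(v,x_0)$, since its norm never drops below $n$ and so every vertex lies in the component of $\Gamma\setminus D_{n-1}(x_0)$ through $v$. Conversely, the relative norm is recovered purely from the internal metric of the (induced, frontier-marked) graph $\Gamma(v,x_0)$ by
$$
\|y\|_{x_0}-n=d_{\Gamma(v,x_0)}\bigl(y,\Delta(v,x_0)\bigr)\qquad(y\in\Gamma(v,x_0)),
$$
the inequality $\le$ coming from a global geodesic $x_0\to y$, which crosses the frontier at norm $n$ and whose tail lies in $\Gamma(v,x_0)$, and the inequality $\ge$ from the triangle inequality $\|y\|_{x_0}\le\|z\|_{x_0}+d(z,y)$ applied to $z\in\Delta(v,x_0)$. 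Consequently membership of $z$ in $G(v)$ can be tested entirely inside $\Gamma(v,x_0)$: reading $z$ from $v$, one checks that at each step the unique outgoing edge (by determinism) lies in $\Gamma(v,x_0)$ and raises the intrinsic distance to $\Delta(v,x_0)$ by one. An end-isomorphism $\psi:\Gamma(v,x_0)\to\Gamma(w,x_0)$ with $\psi(v)=w$ preserves the frontier and the graph structure, hence this intrinsic distance and the whole test, giving $G(v)=G(w)$.

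Finally I would harvest finiteness. By Proposition~\ref{prop: equivalence context-free} the context-free graph $(\Gamma,x_0)$ has finitely many end-cones up to end-isomorphism, and by Proposition~\ref{prop: frontier uniformly bounded} every frontier $\Delta(v,x_0)$ has diameter at most $3K$, hence bounded cardinality, as $\wt{A}$ is finite and $\Gamma$ has out-degree at most $|\wt{A}|$ (so every ball is finite). Since the basepoint $v$ is itself a frontier vertex, there are only finitely many pointed end-cones $(\Gamma(v,x_0),v)$ up to end-isomorphism, whence finitely many sets $G(v)$; together with the single quotient $\emptyset$ this yields finitely many left quotients of $\Geo(\Gamma,x_0)$. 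By Myhill--Nerode \cite{hop} the language $\Geo(\Gamma,x_0)$ is regular.

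The main obstacle is the intrinsic-invariance step: one must verify carefully that the norm $\|\cdot\|_{x_0}$, restricted to an end-cone, is reconstructible from the internal metric of $\Gamma(v,x_0)$ and its frontier (the displayed distance formula), so that geodesity of a continuation is genuinely a property of the pointed end-cone and is transported by end-isomorphisms. Once this is secured, the reformulation in terms of norm increments, the reduction to left quotients, and the counting are all routine, the latter two resting directly on the finiteness of end-cone types and the uniform frontier bound already established.
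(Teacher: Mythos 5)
Your proof is correct, and its skeleton is the same as the paper's: both arguments combine Proposition~\ref{prop: equivalence context-free} (finitely many end-cones up to end-isomorphism) with Proposition~\ref{prop: frontier uniformly bounded} (frontiers of uniformly bounded, hence finite, size) to show that the geodesic behaviour beyond a vertex takes only finitely many values. The differences lie in the two technical layers, and they make your version more self-contained. First, where you conclude via Myhill--Nerode, identifying the nonempty left quotients of $\Geo(\Gamma,x_0)$ with the sets $G(v)$ of geodesic prolongations, the paper instead reduces regularity to the finiteness of the cones $C(v,x_0)$ up to pointed isomorphism by following the proof strategy of \cite[Theorem 4.5]{Pedro Nora}; since $G(v)$ is exactly the geodesic language of the pointed cone $(C(v,x_0),v)$, the two reductions are equivalent, but yours does not lean on the external construction. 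Second, where you transport $G(v)$ along an end-isomorphism using the intrinsic reconstruction $\|y\|_{x_0}-\|v\|_{x_0}=d_{\Gamma(v,x_0)}\bigl(y,\Delta(v,x_0)\bigr)$, the paper proves the corresponding invariance (an end-isomorphism $\psi:\Gamma(v_1,x_0)\to\Gamma(v_2,x_0)$ sends geodesic prolongations of $v_1$ to geodesic prolongations of $\psi(v_1)$, hence restricts to a pointed isomorphism of cones) by contradiction: a shorter route to $\psi(u)$ would have to cross $\Delta(v_2,x_0)$ at some $z$, and pulling it back through $\psi^{-1}$ would shorten the route to $u$. The mathematical content is the same---your distance formula is in effect what makes the paper's contradiction run---but isolating it as a statement, together with your explicit bookkeeping that basepoints are frontier vertices and frontiers are finite, makes the final counting (which the paper compresses into its last sentence) fully transparent.
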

\begin{proof}
Following the same strategy of the proof of \cite[Theorem 4.5]{Pedro Nora}, it is enough to show that there are finitely many cones $C(v,x_0)$ up to end-isomorphism, which in this context means an isomorphism $\psi:C(v,x_0)\to C(w,x_0)$ with $\psi(v)=w$. Now, by Proposition~\ref{prop: equivalence context-free} $\Gamma$ is a context-free graph in the sense of Muller-Schupp, so there are finitely many end-cones graphs $\Gamma(v,x_0)$ up to end-isomorphism (of end-cones). We have already seen in the proof of Proposition~\ref{prop: context-free implies tree-like} that each cone $C(v,x_0)$ is contained in $\Gamma(v,x_0)$. Let \[\psi:\Gamma(v_1,x_0)\to \Gamma(v_2,x_0)\] be an end-isomorphism. We claim that if $x_0\mapright{w_1}v_1\mapright{w_2}u$ is a geodesic with $u\in C(v_1,x_0)$, then for any geodesic $x_0\mapright{s_1}\psi(v_1)$, also the walk $x_0\mapright{s_1}\psi(v_1)\mapright{w_2}\psi(u)$ is a geodesic. Suppose by contradiction that there is a shorter walk $x_0\mapright{s_1'}z\mapright{w_2'}\psi(u)$ where $z\in\Delta(v_2, x_0)$, since $\|z\|_{x_0}=\|\psi(v_1)\|=\|v_2\|_{x_0}$ we deduce that the length of the walk $z\mapright{w_2'}\psi(u)$ is shorter than $\psi(v_1)\mapright{w_2}\psi(u)$. Thus, the walk $\psi^{-1}(z)\mapright{w_2'}u$ is shorter than $v_1\mapright{w_2} u$, and this contradicts the fact that $x_0\mapright{w_1}v_1\mapright{w_2}u$ is a geodesic. 
Hence, $\psi(u)\in C(\psi(v_1),x_0)$ and $\psi:C(v_1, x_0)\to C(\psi(v_1), x_0)$ is an end-isomorphism of cones. Since a cone $C(v_1,x_0)$ is completely determined by the vertex $v_1$ and the graph $\Gamma(v_1,x_0)$, and there are finitely many end-isomorphic classes of graphs $\Gamma(v,x_0)$, and $\Delta(v,x_0)$ is uniformly upper bounded by Proposition~\ref{prop: frontier uniformly bounded}, then we deduce that there are also finitely many cones up to end-isomorphism. 
\end{proof}
In \cite[Example 4.4]{Pedro Nora} it is described an inverse monoid on $A=\{a,b,c\}$ that is not finitely presented, whose Sch\"utzenberger graph $\Gamma$ of the element $aa^{-1}$ is a tree, but the language of the geodesics rooted at $aa^{-1}$ is not a regular language, see Fig.\ref{fig: bicyclic}. Hence, by the previous proposition, we conclude that $\Gamma$ can not be a context-free graph. Therefore, if we remove the condition of being finitely presented the ``only if'' condition of the previous theorem is no longer valid.
\begin{figure}[htp]
\includegraphics[scale=1.3]{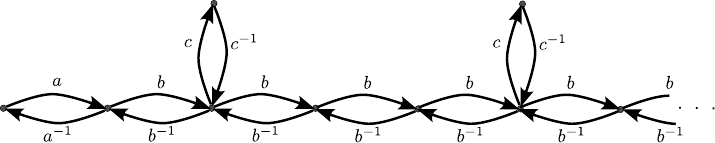}
\caption{The Sch\"utzenberger graph of $aa^{-1}$ of the inverse monoid with presentation $Inv\la a,b,c\mid aa^{-1}b^{n^2}cc^{-1}b^{-n^2}=aa^{-1}, n\ge 1 \ra$.} \label{fig: bicyclic}
\end{figure}


\section{Characterization of quasi-transitive context-free inverse graphs}\label{sec: main result}
In this section, we shall prove the characterization of context-free inverse graphs that are quasi-transitive, but first, we need to make some considerations on the language $L(\Gamma,x_0)$ of a rooted inverse graph that will be used in the proof. 
\\
For each vertex $v\in V(\Gamma)$ we denote by $\mathcal{D}(\Gamma, v)$ the set of symmetric Dyck words $u\in\wt{A}$ labeling a reduced circuit $v\mapright{u}v$ starting at $v$, i.e., a word $u\in\wt{A}$ with $\oo{u}=1$ in the free group $\mathbb{F}_A$. Clearly if a word $w$ factorizes as $w=u_1v_1u_2v_2\ldots u_{k-1}v_{k-1}u_{k}$ with $\oo{u_i}=1$ and $w$ labels the walk $x_0\mapright{w}z$ then $x_0=y_1\mapright{v_1}y_2\mapright{v_2}\ldots y_{k-1}\mapright{v_{k-1}}y_k=z$ is also a walk with $u_i\in \mathcal{D}(\Gamma, y_i)$. In this case we say that $w$ is \emph{derivable} from $v_1\ldots v_{k-1}$ in $\Gamma$. In particular, by Lemma~\ref{lem: there are reduced walks} any word $w$ labeling a walk in $\Gamma$ is derivable from $\oo{w}$. The following lemma shows that the notion of derivability transfers to the image graphs in case there is a covering. 
\begin{lemma}\label{lem: expansion set}
Let $\psi:  \Gamma\to \Lambda$ be a covering of inverse graphs. Then, $\mathcal{D}(\Gamma, v)=\mathcal{D}(\Lambda, \psi(v))$ for every $v\in V(\Gamma)$. Moreover, the word $w$ is derivable from $v_1\ldots v_{k-1}$ in $\Gamma$ if and only $w$ is derivable from $v_1\ldots v_{k-1}$ in $\Lambda$.
\end{lemma}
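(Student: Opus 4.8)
The plan is to establish the two assertions in turn, deriving the second from the first. Throughout, the two complementary tools are: a graph morphism sends any walk to a walk with the same label, so one may always \emph{push down} a walk from $\Gamma$ to $\Lambda$; and, since $\psi$ is a cover, Proposition~\ref{prop: lifting of coverings} lets one \emph{lift} any walk of $\Lambda$ uniquely to $\Gamma$ once a starting vertex over its origin is fixed.

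For the equality $\mathcal{D}(\Gamma,v)=\mathcal{D}(\Lambda,\psi(v))$ I would argue by double inclusion. If $u\in\mathcal{D}(\Gamma,v)$, then $\oo{u}=1$ and there is a circuit $v\mapright{u}v$; pushing it down through $\psi$ gives a walk $\psi(v)\mapright{u}\psi(v)$ with the same label, and as $\oo{u}=1$ is a property of $u$ alone, $u\in\mathcal{D}(\Lambda,\psi(v))$ --- this inclusion uses only that $\psi$ is a morphism. Conversely, given $u\in\mathcal{D}(\Lambda,\psi(v))$, lift the circuit $\psi(v)\mapright{u}\psi(v)$ to the unique walk $v\mapright{u}w$ in $\Gamma$ lying over it (Proposition~\ref{prop: lifting of coverings}). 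Since $\oo{u}=1$, Lemma~\ref{lem: there are reduced walks} provides the reduced walk $v\mapright{\oo{u}}w$, i.e.\ the empty walk from $v$ to $w$, which forces $w=v$; hence $v\mapright{u}v$ is a circuit and $u\in\mathcal{D}(\Gamma,v)$.

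For the statement on derivability, fix the roots $x_0$ of $\Gamma$ and $\psi(x_0)$ of $\Lambda$. Assume first that $w$ is derivable from $v_1\ldots v_{k-1}$ in $\Gamma$, witnessed by a factorization $w=u_1v_1u_2\cdots v_{k-1}u_k$ with $\oo{u_i}=1$ and a walk $x_0=y_1\mapright{u_1}y_1\mapright{v_1}y_2\cdots\mapright{v_{k-1}}y_k\mapright{u_k}y_k$ in which $u_i\in\mathcal{D}(\Gamma,y_i)$. Pushing this walk down through $\psi$ gives the walk $\psi(x_0)=\psi(y_1)\mapright{u_1}\psi(y_1)\mapright{v_1}\cdots\mapright{u_k}\psi(y_k)$, and by the first assertion $u_i\in\mathcal{D}(\Gamma,y_i)=\mathcal{D}(\Lambda,\psi(y_i))$; thus the same factorization witnesses derivability of $w$ from $v_1\ldots v_{k-1}$ in $\Lambda$.

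Conversely, suppose $w=u_1v_1\cdots v_{k-1}u_k$ with $\oo{u_i}=1$ witnesses derivability in $\Lambda$. I would lift the $\Lambda$-walk labeled $w$ and based at $\psi(x_0)$ to the unique walk based at $x_0$ in $\Gamma$, then read off its intermediate vertices along the factorization, the $u_i$-block running $y_i\mapright{u_i}y_i'$ and the $v_i$-block running $y_i'\mapright{v_i}y_{i+1}$. The crucial --- and essentially only delicate --- point is to check that each lifted Dyck block is genuinely a circuit: since $\oo{u_i}=1$, Lemma~\ref{lem: there are reduced walks} gives a walk $y_i\mapright{\oo{u_i}}y_i'$, again the empty walk, so $y_i'=y_i$ and $u_i\in\mathcal{D}(\Gamma,y_i)$. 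This produces the required witnessing factorization in $\Gamma$. I expect this closure of the lifted blocks to be the main obstacle; it is exactly where determinism of inverse graphs (through Lemma~\ref{lem: there are reduced walks}) is indispensable, for without it the lift of a Dyck word need not return to its starting vertex, and derivability would fail to transfer.
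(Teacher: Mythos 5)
Your proof is correct and follows essentially the same route as the paper: the inclusion $\mathcal{D}(\Gamma,v)\subseteq\mathcal{D}(\Lambda,\psi(v))$ by pushing circuits down through the morphism, the reverse inclusion by lifting via Proposition~\ref{prop: lifting of coverings} and closing the lifted walk using $\oo{u}=1$ together with the determinism of $\Gamma$ (Lemma~\ref{lem: there are reduced walks}), and the derivability statement as a consequence of the equality. The paper dispatches that last statement in one line, whereas you spell out the block-by-block transfer, but the underlying argument is identical.
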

\begin{proof}
The inclusion $\mathcal{D}(\Gamma, v)\subseteq \mathcal{D}(\Lambda, \psi(v))$ follows by the fact that both $\Gamma$ and $\Lambda$ are inverse and $\psi$ is a morphism of inverse graphs. As for the other inclusion, by the lifting property of Proposition~\ref{prop: lifting of coverings}, any circuit $\psi(v)\mapright{u}\psi(v)$ with $u\in \mathcal{D}(\Lambda, \psi(v))$ lifts to the circuit $v\mapright{u} v$ since $\Gamma$ is inverse and $\oo{u}=1$, i.e., $u\in \mathcal{D}(\Gamma, v)$. The last statement follows from the previous equality. 
\end{proof}

\begin{lemma}\label{lem: closure property for coverings}
Let $\psi:  \Gamma\to \Lambda$ be a covering of inverse graphs, where $\Lambda$ is finite. Suppose that $L$ is a context-free language with $\oo{L(\Gamma, x_0)}\subseteq L\subseteq L(\Gamma, x_0)$. Then, $L(\Gamma,x_0)$ is context-free.
\end{lemma}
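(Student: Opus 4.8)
The plan is to present $L(\Gamma,x_0)$ as the image of $L$ under a short chain of operations that preserve context-freeness. The underlying geometric picture is that every circuit at $x_0$ arises from a circuit whose label already lies in $L$ by \emph{inserting}, at each vertex the circuit traverses, a word spelling the identity of $\mathbb{F}_A$; and that, because $\psi$ is a covering and $\Lambda$ is finite, these insertions range over only finitely many context-free languages. First I would fix the two facts that make the ``insertion'' description exact. If $u\in\mathcal{D}(\Gamma,z)$, then $\oo{u}=1$ forces $u$ to label a circuit $z\mapright{u}z$ (Lemma~\ref{lem: there are reduced walks}), so inserting such a $u$ into a circuit at $x_0$ yields again a circuit at $x_0$; since $L\subseteq L(\Gamma,x_0)$, every insertion applied to a word of $L$ stays in $L(\Gamma,x_0)$. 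Conversely, for $w\in L(\Gamma,x_0)$ we have $\oo{w}\in\oo{L(\Gamma,x_0)}\subseteq L$, and, as already noted before the statement, $w$ is derivable from $\oo{w}$, i.e.\ $w$ is obtained from $\oo{w}$ by precisely such insertions. Hence
\[
L(\Gamma,x_0)=\{\,w:\ w\text{ is derivable from some }s\in L\,\},
\]
the \emph{expansion} of $L$ by Dyck insertions.

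Next I would make the family of insertion languages finite. As $\Lambda$ is finite, $L(\Lambda,p)$ is regular for each $p\in V(\Lambda)$, so $\mathcal{D}(\Lambda,p)=L(\Lambda,p)\cap D$ is context-free, where $D=\{u\in\wt{A}^*:\oo{u}=1\}$ is the (context-free) Dyck language. By Lemma~\ref{lem: expansion set} the insertion language at a vertex $z$ of $\Gamma$ depends only on $\psi(z)$, namely $\mathcal{D}(\Gamma,z)=\mathcal{D}(\Lambda,\psi(z))$, so the a priori infinite collection $\{\mathcal{D}(\Gamma,z)\}_{z\in V(\Gamma)}$ collapses to the finite family $\{\mathcal{D}(\Lambda,p)\}_{p\in V(\Lambda)}$.

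To route each position to the correct member of this family I would decorate letters by the $\Lambda$-state preceding them. Set $\hat{A}=V(\Lambda)\times\wt{A}$, let $\pi:\hat{A}^*\to\wt{A}^*$ erase the decoration, and let $R\subseteq\hat{A}^*$ be the regular set of decorated words $(q_1,c_1)\cdots(q_n,c_n)$ for which $q_1\mapright{c_1}q_2\mapright{c_2}\cdots\mapright{c_n}\psi(x_0)$ is a circuit at $\psi(x_0)$ in $\Lambda$. Then $\hat{L}=\pi^{-1}(L)\cap R$ is context-free and, by determinism of $\Lambda$, consists exactly of the uniquely decorated words of $L$. Applying the context-free substitution $\sigma\big((q,c)\big)=\mathcal{D}(\Lambda,q)\,c$ and appending a final Dyck factor gives
\[
\sigma(\hat{L})\cdot\mathcal{D}(\Lambda,\psi(x_0)).
\]
For $s=c_1\cdots c_n\in L$ tracing $x_0=z_0\mapright{c_1}z_1\cdots\mapright{c_n}z_n=x_0$ in $\Gamma$, the factor $\mathcal{D}(\Lambda,q_i)=\mathcal{D}(\Gamma,z_{i-1})$ inserts a Dyck word before the $i$-th letter and the trailing factor inserts one at $z_n=x_0$; this is exactly the expansion of $L$, hence equals $L(\Gamma,x_0)$. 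Since inverse homomorphism, intersection with a regular language, context-free substitution, and concatenation all preserve context-freeness \cite{hop}, we conclude that $L(\Gamma,x_0)$ is context-free.

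The hard part is the state-dependence of the insertions: a plain substitution on $L$ cannot tell which vertex of $\Gamma$ a given position sits at, and a priori there are infinitely many candidate insertion languages $\mathcal{D}(\Gamma,z)$. This is resolved by the two devices above — Lemma~\ref{lem: expansion set}, which identifies $\mathcal{D}(\Gamma,z)$ with $\mathcal{D}(\Lambda,\psi(z))$ and thereby makes the family finite, and the decoration $\hat{A}$ with the regular control $R$, which directs each position to the right member of this family. What then remains is the routine verification of the displayed set-equality in both directions, using ``insertion preserves circuits'' and ``$w$ is derivable from $\oo{w}$''.
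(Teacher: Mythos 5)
Your proof is correct, and it rests on exactly the same two pillars as the paper's: the set-equality
\[
L(\Gamma,x_0)=\{\,w:\ w\text{ is derivable from some }s\in L\,\},
\]
which uses $L\subseteq L(\Gamma,x_0)$ for one inclusion and $\oo{L(\Gamma,x_0)}\subseteq L$ together with ``$w$ is derivable from $\oo{w}$'' for the other, and Lemma~\ref{lem: expansion set}, which collapses the a priori infinite family $\{\mathcal{D}(\Gamma,z)\}_{z\in V(\Gamma)}$ to the finite family $\{\mathcal{D}(\Lambda,p)\}_{p\in V(\Lambda)}$. Where you genuinely diverge is in how this expansion is shown to be context-free. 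The paper builds the accepting device by hand: for each $p\in V(\Lambda)$ it constructs an explicit PDA $\mathcal{D}_p$ recognizing $\mathcal{D}(\Lambda,p)$ (the stack holds the running free reduction of the input), then forms a product machine $M'$ with state set $Q\times V(\Lambda)$ to track the $\Lambda$-vertex --- precisely the role played by your decoration alphabet $\hat{A}$ and regular control $R$ --- and finally glues a copy of $\mathcal{D}_p$ at every state of $M'$ via $1$-moves and fresh bottom-of-stack markers, closing with an induction on computations to check that the glued machine accepts exactly the derivable words. You instead realize the same three-layer structure purely by closure properties: $\mathcal{D}(\Lambda,p)=L(\Lambda,p)\cap D$ is context-free because $\Lambda$ is finite (a neat replacement for the explicit $\mathcal{D}_p$), the decoration $\hat{L}=\pi^{-1}(L)\cap R$ is context-free by inverse homomorphism and regular intersection, and the insertions are a single context-free substitution followed by one concatenation. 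Your route buys modularity: all correctness is outsourced to standard theorems in \cite{hop}, so no induction over computations and no stack-marker bookkeeping (needed in the paper to guarantee the machine exits an inserted Dyck block only after completing a full circuit) is required. The paper's route buys an explicit, self-contained automaton, in keeping with the inverse-PDA theme of the article. One detail worth making explicit in your write-up: since $1\in\oo{L(\Gamma,x_0)}\subseteq L$, you should adopt the convention that the empty decorated word lies in $R$, so that $\sigma(\hat{L})\cdot\mathcal{D}(\Lambda,\psi(x_0))$ contains $\mathcal{D}(\Gamma,x_0)$, the set of words derivable from the empty word.
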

\begin{proof}
 Since $\oo{L(\Gamma, x_0)}\subseteq L$ that is accepted by some pushdown automaton $M$, it is enough to show that there is a pushdown automaton $\mathcal{M}$ accepting all and only all the derivable words of $L$ in $\Gamma$ that is, by Lemma~\ref{lem: expansion set}, equivalent to show that $\mathcal{M}$ accepts all the derivable words of $L$ in $\Lambda$.  We first show that there is a pushdown automaton $\mathcal{D}_y$ that accepts $\mathcal{D}(\Lambda, y)$ for each $y\in V(\Lambda)$, and then we show that such pushdown automata can be merged with $M$ to obtain $\mathcal{M}$. The set of states of $\mathcal{D}_y$ coincides with $V(\Lambda)$, the initial and final state is $y$ and the alphabet of the stack is $\wt{A}\cup\{\perp_y\}$. The transitions are defined as follows: 
\begin{align*}
&(x_1, b, a) \sststile{\mathcal{D}_y}{}(x_2, ab),\,\mbox{ if }b\neq a^{-1}\mbox{ and }x_1\mapright{b}x_2\mbox{ is an edge in }\Lambda\\
&(x_1, b, a) \sststile{\mathcal{D}_y}{}(x_2, 1),\,\mbox{ if }b= a^{-1}\mbox{ and }x_1\mapright{b}x_2\mbox{ is an edge in }\Lambda
\end{align*}
Now, starting at the state $y$ with the stack containing the initial symbol $\perp_y$, by a simple induction on the length of the input word $u$, it is straightforward to show that by reading $u$ the machine ends in the state $z$ for which we have a walk $y\mapright{u}z$ in $\Lambda$ and on the stack we read the word $\oo{u}$. Therefore, $u\in \mathcal{D}(\Lambda, y)$ if and only if $z=y$ and on the stack we read $\perp_y$.
Let $M=(Q,\wt{A},\Theta, \delta',p_0,\perp,F')$
be the pushdown automaton accepting the language $L$, we first modify it by considering the automaton $M'=(Q\times V(\Lambda),\wt{A},\Theta, \delta, (p_0,\psi(x_0)), \perp, F)$ that has a transition $((p_1,x_1), a, \beta) \sststile{M'}{}((p_2,x_2), \gamma)$ whenever $(p_1, a, \beta) \sststile{M}{}(p_2, \gamma)$ is a transition of $M$, and $x_1\mapright{a}x_2$ is an edge in $\Lambda$ (possibly empty). The set of final states is given by $F=\{(q,x): q\in F', x\in V(\Lambda)\}$. In this way, if we take as the initial state of $M'$ the pair $(p_0, \psi(x_0))$, where $p_0$ is the initial state of $M$, then $M'$ accepts the same language as $M$, but it has the extra advantage that when $M'$ reads the input $u$ it reaches a state $(p',y)$ where the second component keeps track of the vertex that we have reached after reading $u$ starting at $\psi(x_0)$ in $\Lambda$. We now glue the automata $\mathcal{D}_y$, $y\in V(\Lambda)$, to $M'$ to obtain the automaton $\mathcal{M}$. More precisely, let us number the set of states of $M'$ as $t_1, \ldots, t_k$. Now consider the pushdown automaton
\[
\mathcal{M}=(Q',\wt{A},\Theta\cup  \wt{A}\cup \{\perp^{(i)}_y: y\in V(\Lambda), i\in[1,k]\}, \delta'', (p_0,\psi(x_0)), \perp, F)
\]
 obtained from $M'$ by adding for each state $t_i=(p_1, y)$ of $M'$ a copy of the automaton $\mathcal{D}_y$ that we denote by $\mathcal{D}^{(i)}_y$ (states are denoted using the superscript $(i)$ like $x^{(i)}$). The gluing is obtained by adding the following transitions: $((p_1,y), 1, \beta) \sststile{\mathcal{M}}{}(y^{(i)}, \beta\perp_y^{(i)})$, for any $\beta\in\Theta$ and where $\perp^{(i)}_y$ is a new stack symbol, and we add the reverse transition $(y^{(i)}, 1, \perp^{(i)}_y) \sststile{\mathcal{M}}{}((p_1,y), 1)$. Now, if $\mathcal{M}$ starts in the initial state $(p_0, \psi(x_0))$ and by reading $u$ it ends in a state $(p_1, y)$, then $\mathcal{M}$ can nondeterministically choose to take the transition $((p_1,y), 1, \beta) \sststile{\mathcal{M}}{}(y^{(i)}, \beta\perp^{(i)}_y)$ and ``enter'' in the automaton $\mathcal{D}^{(i)}_y$. Suppose that the machine performs such transition, and it is in the state $y^{(i)}$ with the stack containing the word $\alpha\perp^{(i)}_y$, for some word $\alpha$ on the stack alphabet of $\mathcal{M}$. Since the final states are not in the state set of $\mathcal{D}^{(i)}_y$, the machine $\mathcal{M}$ to accept, has to exit from $\mathcal{D}^{(i)}_y$. Suppose that $u'$ is the word that $\mathcal{M}$ has read while performing transitions belonging to $\mathcal{D}^{(i)}_y$. We claim that $u'\in \mathcal{D}(\Lambda, y)$. Indeed, since the only transition to exit, is $(y^{(i)}, 1, \perp^{(i)}_y) \sststile{\mathcal{M}}{}((p_1,y), 1)$, the machine exits only if it is in state $y^{(i)}$ and with the stack containing the word $\alpha\perp^{(i)}_y$. This means that the machine $\mathcal{D}_y$, by reading $u'$ starting at $y$, returns to the same state and with the stack containing only the initial symbol $\perp_y$, hence $u'\in \mathcal{D}(\Lambda, y)$. Thus, by induction on the length of a word $w=a_1\ldots a_k$ it is possible to prove the following claims
\begin{itemize}
\item  if in $M'$ we have the computation $((p_0,\psi(x_0)),\perp)\sdtstile{M'}{u}((q,y),\alpha)$, then for any word $w$ that is derivable from $u$ in $\Lambda$, in $\mathcal{M}$ we have the following computation $((p_0,\psi(x_0)),\perp~)\sdtstile{\mathcal{M}}{w}((q,y),\alpha)$;
\item if in $\mathcal{M}$ we have the computation $((p_0,\psi(x_0)),\perp)\sdtstile{\mathcal{M}}{w}((q,y),\alpha)$, then there is a word $u$ such that $w$ is derivable from $u$ in $\Lambda$, and there is a computation $((p_0,\psi(x_0)),\perp)\sdtstile{M'}{u}((q,y),\alpha)$ in $M'$.
\end{itemize}
In particular, the first claim shows that if $u\in L(M')=L$ and $w$ is derivable from $u$ in $\Lambda$, then $w\in L(\mathcal{M})$, while the second one shows that all the words accepted by $\mathcal{M}$ are derivable from some word in $L(M')=L$.
\end{proof}
We are now in position to prove our characterization. 
\begin{theorem}\label{theo: main theorem}
Let $\Gamma$ be a quasi-transitive infinite inverse graph. T.F.A.E.
\begin{enumerate}
\item $\Gamma$ is context-free;
\item $\Gamma$ is quasi-isometric to a tree;
\item $\Aut(\Gamma)$ is the fundamental group of a finite graph of finite groups;
\item $\Aut(\Gamma)$ is a finitely generated virtually free group;
\item there is a cover $\psi: \Gamma\to \Lambda$ of inverse graphs with $\Lambda$ finite, and a surjective morphism
 $$\oo{\eta}:\pi_1(\Lambda, \psi(x_0))\to \mathbb{F}_X$$ onto a free group $\mathbb{F}_X$ of finite rank, with $\oo{L(\Gamma,x_0)}=\ker(\oo{\eta})$.
\end{enumerate}
\end{theorem}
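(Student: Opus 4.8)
The plan is to prove the web of implications $(1)\Rightarrow(2)\Rightarrow(4)\Rightarrow(5)\Rightarrow(1)$ together with $(3)\Leftrightarrow(4)$. The implication $(1)\Rightarrow(2)$ is already contained in Proposition~\ref{prop: context-free implies tree-like}, so nothing new is needed there. For $(2)\Rightarrow(4)$ I would exploit a special feature of inverse graphs: by Lemma~\ref{lem: uniqueness of automorphism} an automorphism is determined by the image of a single vertex, so $\Aut(\Gamma)$ acts \emph{freely} on $V(\Gamma)$, and for a finite set $K$ of vertices there are at most $|K|^2$ automorphisms $g$ with $gK\cap K\neq\emptyset$; hence the action is properly discontinuous. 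Since $\Gamma$ is locally finite (the alphabet is finite) and quasi-transitive, the action is also cocompact, so by the Milnor--\v{S}varc lemma $\Aut(\Gamma)$ is finitely generated and quasi-isometric to $\Gamma$. As $\Gamma$ is quasi-isometric to a tree, so is $\Aut(\Gamma)$, and a finitely generated group quasi-isometric to a tree is virtually free (the group-theoretic side of the Muller--Schupp theorem, cf.\ \cite[Theorem 19]{harpe}), giving $(4)$. The equivalence $(3)\Leftrightarrow(4)$ is then standard Bass--Serre theory \cite{DiDun}: the fundamental group of a finite graph of finite groups acts on its Bass--Serre tree with finite vertex stabilizers and a free subgroup of finite index, hence is finitely generated virtually free, while conversely a finitely generated virtually free group is the fundamental group of a finite graph of finite groups by the Karrass--Pietrowski--Solitar structure theorem.

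For $(4)\Rightarrow(5)$ the idea is to feed a free finite-index subgroup into the machinery of Proposition~\ref{prop: language inverse homomorphism}. Choose a finite-index free subgroup $H\le\Aut(\Gamma)$; being of finite index in a finitely generated group, $H$ is finitely generated and hence $H\cong\mathbb{F}_X$ with $X$ finite. Put $W=H\cdot x_0$. By Lemma~\ref{lem: finite index} the quotient $\Lambda=\Gamma/\rho_W$ is finite, by Lemma~\ref{lem: it is a covering} the canonical map $\psi=\pi:\Gamma\to\Lambda$ is a cover, and Proposition~\ref{prop: language inverse homomorphism} produces a surjective homomorphism $\oo{\eta}:\pi_1(\Lambda,\psi(x_0))\to H=\mathbb{F}_X$ onto a free group of finite rank with $\ker(\oo{\eta})=\oo{L(\Gamma,x_0)}$, which is exactly $(5)$.

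Finally, for $(5)\Rightarrow(1)$ I would first show that $\oo{L(\Gamma,x_0)}=\ker(\oo{\eta})$ is context-free and then bootstrap with Lemma~\ref{lem: closure property for coverings}. To see the kernel is context-free, fix a spanning tree of the finite graph $\Lambda$ and, for each edge $e$, record the element $c(e)\in\mathbb{F}_X$ obtained by applying $\oo{\eta}$ to the circuit formed by $e$ together with the two tree geodesics to $\psi(x_0)$; this assignment satisfies $c(e^{-1})=c(e)^{-1}$ and is multiplicative along walks, so for a circuit $p$ at $\psi(x_0)$ the telescoping product of the $c$-values equals $\oo{\eta}([p])$. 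A real-time PDA that keeps the current vertex of $\Lambda$ in its (finite) state and the partial product $\prod c(e_i)$ as a reduced word of $\mathbb{F}_X$ on its stack then accepts precisely the circuit labels mapping to $1$; intersecting with the regular language of reduced words yields $\ker(\oo{\eta})$, which is therefore context-free — this is the group-theoretic Chomsky--Sch\"{u}tzenberger step announced in the introduction. Since reduced circuit labels are circuit labels by Lemma~\ref{lem: there are reduced walks}, we have $\oo{L(\Gamma,x_0)}\subseteq L(\Gamma,x_0)$, so taking $L=\oo{L(\Gamma,x_0)}$ in Lemma~\ref{lem: closure property for coverings} (with the cover $\psi:\Gamma\to\Lambda$, $\Lambda$ finite) gives that $L(\Gamma,x_0)$ is context-free, i.e.\ $(1)$.

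I expect the geometric step $(2)\Rightarrow(4)$ to be the main obstacle, since it is the only place requiring genuinely external input: the properness and cocompactness of the $\Aut(\Gamma)$-action, the Milnor--\v{S}varc lemma, and the deep classification of groups quasi-isometric to trees. The two language-theoretic steps $(4)\Rightarrow(5)$ and $(5)\Rightarrow(1)$, by contrast, reduce almost entirely to the preparatory results already proved above, the only new ingredient being the elementary PDA that recognizes $\ker(\oo{\eta})$.
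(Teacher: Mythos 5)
Your proposal is correct, and it closes the equivalences along a cycle that differs from the paper's in two places. For the geometric step the paper proves $(2)\Rightarrow(3)$ directly: trivial vertex stabilizers (Lemma~\ref{lem: uniqueness of automorphism}) together with the tree decomposition of uniform diameter from Proposition~\ref{prop: charact tree-like} give finite treewidth, and the Diekert--Wei\ss{} theorem \cite[Corollary 5.10]{Armin} then exhibits $\Aut(\Gamma)$ as the fundamental group of a finite graph of finite groups, after which $(3)\Rightarrow(4)$ is Serre's classical fact; this way condition $(3)$ sits inside the cycle for free, whereas you must invoke Karrass--Pietrowski--Solitar separately to recover $(3)$ from $(4)$. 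Your route $(2)\Rightarrow(4)$ --- free, properly discontinuous, cocompact action, Milnor--\v{S}varc, then the classification of finitely generated groups quasi-isometric to trees --- is exactly the alternative the paper records in the remark following the theorem, so it is sound; it trades the treewidth result for the (equally deep) quasi-isometric rigidity of virtually free groups. Second, for $(5)\Rightarrow(1)$ the paper does not build a pushdown automaton: it factors $\oo{\eta}$ through a monoid morphism $h:\wt{Y}^*\to\wt{X}^*$ via Herbst's lemma \cite[Lemma~2.1]{herb}, writes $\ker(\oo{\eta})$ as the set of reduced words in $h^{-1}(\theta_X^{-1}(1))$, and concludes by closure of context-free languages under inverse morphisms and intersection with regular sets. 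Your spanning-tree/telescoping construction computes the same thing --- it is the Chomsky--Sch\"utzenberger-style step made explicit by hand --- and the telescoping identity $\oo{\eta}([p])=\prod_i c(e_i)$ is correct; the only point needing routine care is the ``real-time'' claim, since multiplying the stack by a word $c(e)$ of bounded length may cancel several stack symbols, so one should either allow $1$-moves or buffer a bounded top portion of the stack in the finite state (this affects neither context-freeness nor the argument). Both proofs then finish identically, feeding $L=\oo{L(\Gamma,x_0)}$ into Lemma~\ref{lem: closure property for coverings}, and your steps $(1)\Rightarrow(2)$ and $(4)\Rightarrow(5)$ coincide with the paper's.
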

\begin{proof}
Implication $(1)\Rightarrow (2)$ follows from Proposition~\ref{prop: context-free implies tree-like}.
\\
$(2)\Rightarrow (3)$.  The group $\Aut(\Gamma)$ acts on $\Gamma$ with finitely many orbits by hypothesis and each vertex stabilizer $\Aut(\Gamma)_v=\{\varphi\in \Aut(\Gamma): \varphi(v)=v\}$ is trivial by Lemma~\ref{lem: uniqueness of automorphism}. Since by Proposition~\ref{prop: charact tree-like} $\Gamma$ has a tree decomposition with uniform diameter, it has finite treewidth (in particular as an unlabeled graph). By a result of Diekert and Weiß \cite[Corollary 5.10]{Armin} we conclude that $\Aut(\Gamma)$ is the fundamental group of a finite graph of finite groups.
\\
$(3)\Rightarrow (4)$. It is a well-known fact, see for instance \cite[Proposition 11 page. 120]{Serre}.
\\
$(4)\Rightarrow (5)$. Since $\Aut(\Gamma)$ is a finitely generated virtually free group, there is a finite index subgroup $H\simeq \mathbb{F}_X$ of $\Aut(\Gamma)$ that is free of finite rank. Using the same notation introduced in Section~\ref{sec: coverings of inverse graphs}, consider $W=H\cdot x_0$ and the associated relation $\rho_W$. Now, take $\Lambda=\Gamma/\rho_W$, by Lemma~\ref{lem: finite index}, $\Lambda$ is a finite inverse graph and the natural projection $\psi:\Gamma\to \Gamma/\rho_W$ is a covering. By Proposition~\ref{prop: language inverse homomorphism} there is a surjective morphism $\oo{\eta}:\pi_1(\Lambda, \psi(x_0))\to H$ with $\oo{L(\Gamma,x_0)}=\oo{\eta}^{-1}(\mathds{1}_H)$. 
\\
$(5)\Rightarrow (1)$. Put $\mathbb{F}_Y=\pi_1(\Lambda, \psi(x_0))$, and let $\theta_Y:\wt{Y}^*\to \mathbb{F}_Y$, $\theta_X:\wt{Y}^*\to \mathbb{F}_X$ be the natural maps. Now, consider the morphism $\varphi= \oo{\eta}\circ \theta_Y$, by the remark \cite[Lemma~2.1]{herb} there is a morphism $h:\wt{Y}^*\to \wt{X}^*$ such that the following diagram commutes:
\[
\begin{tikzcd}
 \mathbb{F}_Y  \arrow[r, "\oo{\eta}"] & \mathbb{F}_X \\
 \wt{Y}^* \arrow[r, "h"] \arrow["\theta_Y", u, twoheadrightarrow] \arrow[ur, "\varphi" ]& \wt{X}^* \arrow[u, "\theta_X", twoheadrightarrow]
\end{tikzcd}
\]
Thus, $\ker(\oo{\eta})= \oo{H}$ is equal to the set of reduced words of the preimage $H=\varphi^{-1}(1)$. Since the previous diagram commutes, we deduce that $H=h^{-1}(\theta_X^{-1}(1))$, where $\theta_X^{-1}(1)$ is the language of symmetric Dyck words, which is a well known context-free language. Now, since context-free languages are closed under inverse morphisms, see \cite[Theorem 6.3]{hop}, we conclude that $H$ is also context-free. Therefore, $\oo{L(\Gamma,x_0)}=\ker(\oo{\eta})=\oo{H}=H\cap \oo{\wt{Y}^*}$ is also context-free since the set of reduced words $\oo{\wt{Y}^*}$ is regular, and intersection of a context-free language with a regular one is still context-free,  \cite[Theorem 6.5]{hop}. Now, since $\oo{L(\Gamma,x_0)}\subseteq L(\Gamma, x_0)$ holds, by Lemma~\ref{lem: there are reduced walks}, then by Lemma~\ref{lem: closure property for coverings} we conclude that $L(\Gamma, x_0)$ is context-free, i.e., $\Gamma$ is context-free. 
\end{proof}
\begin{rem}
Note that to prove equivalences $(2)\Leftrightarrow (3)\Leftrightarrow (4)$ we could have used the  \v{V}arc-Milnor Lemma \cite[Theorem 23]{harpe} and Muller and Schupp' result. Since the graph $\Gamma$ is locally finite and connected, it is a proper geodesic space, and $\Aut(\Gamma)$ acts by isometries on $\Gamma$. Since this action is proper and the quotient graph $\Aut(\Gamma)\setminus\Gamma$ is finite, by the \v{V}arc-Milnor Lemma $\Aut(\Gamma)$ is finitely generated, and the Cayley graph $\Cay(\Aut(\Gamma), X)$ of $\Aut(\Gamma)$ is quasi-isometric to $\Gamma$ via the mapping $\varphi\mapsto \varphi(x_0)$. Thus, since $\Cay(\Aut(\Gamma), X)$ and $\Gamma$ are quasi-isometric, by Muller and Schupp' result we have that $\Gamma$ is a quasi-tree if and only if $\Aut(\Gamma)$ is virtually-free. 
\end{rem}

The last condition may be seen as a more group theoretic restatement of the Chomsky-Sch\"utzenberger representation theorem for context-free languages \cite{cho,Kambites}, and it is central in proving the characterization of Section~\ref{sec: weaker tara's conjecture}. 
The previous theorem does not hold if we remove the hypothesis of being quasi-transitive. Indeed, consider the inverse graph $\Gamma$ in Fig.~\ref{fig: bicyclic}. There are no non-trivial automorphisms, so it is far from being quasi-transitive. Moreover, $\Gamma$ is tree-like (actually a tree!), but as we have seen at the end of Section~\ref{sec: geometrical considerations} it is not a context-free graph. A natural question concerns the existence of examples of tree-like quasi-transitive inverse graphs that are not in general Cayley graphs. One way to construct such examples consists in considering Sch\"{u}tzenberger graphs of amalgams of finite inverse semigroups. Indeed, take an amalgam $[S_1, S_2; U]$ of finite inverse semigroups $S_1, S_2$ with common inverse semigroup $U$, and consider the amalgamated free product $S_1*_U S_2$. Now, the underlying graph $\Gamma$ of any Sch\"{u}tzenberger automaton of any element $g\in S_1*_U S_2$ is an inverse graph whose automorphism group $\Aut(\Gamma)$ is isomorphic to the maximal subgroup of $S_1*_U S_2$ that contains the idempotent $gg^{-1}$, see \cite{Steph}. In \cite[Theorem 7]{maximal} it is proved that in case the idempotent $gg^{-1}$ is $\mathcal{D}$-related to an idempotent of $U$, then $\Gamma\setminus \Aut(\Gamma)$ is finite and $\Aut(\Gamma)$ is isomorphic to the fundamental groups of finite graphs of finite groups. In particular, thanks to the previous theorem, this shows that such graphs are context-free. This offers an alternative proof to the more constructive and direct one contained in \cite{amalgams}.

\section{A characterization of groups that are virtually finitely generated subgroups of the direct product of free groups}\label{sec: weaker tara's conjecture}
Differently from the regular case, the intersection of context-free languages is not, in general, a context-free language \cite{hop}.  For this reason, it is natural to consider the closure of context-free languages under intersection. The intersection of $k$ context-free languages is called a $k$-context-free language. A language is poly-context-free if it is $k$-context-free for some $k\in\mathbb{N}$ \cite{Tara}.  A group is called $k$-context-free if its word problem is a $k$-context-free language.  A group whose word problem is a poly-context-free language is called a poly-context-free group. Such groups have been introduced in \cite{Tara} as a natural generalization of virtually free groups, and they have been further explored in \cite{TCS}, where it is proved that they have a multipass word problem. Since the class of 1-context-free groups coincides with the class of virtually free groups, it is a natural problem to generalize such result for a generic poly-context-free group. Brough has conjectured in \cite{Tara} that the class of finitely generated poly-context-free groups coincides with the class of (finitely generated) groups which are virtually subgroups of a direct product of free groups. Note that, in general, subgroups of direct products of (two or more) free groups can behave quite wildly. For instance, $\mathbb{F}_2\times \mathbb{F}_2 $ contains as many non-isomorphic subgroups \cite{Baum}, and for finitely generated subgroups they can even have unsolvable membership problem \cite{Miha}. 
 In this section, we prove a weaker version of Brough's conjecture by showing that the class of finitely generated groups which are virtually a finitely generated subgroup of a direct product of free groups coincides with the class of groups whose word problem is in the class of languages {\bf PT-ICF} that are the intersection of finitely many inverse-context-free languages accepted by inverse graphs that are quasi-transitive. By Theorem~\ref{theo: main theorem}, {\bf PT-ICF} is also the class of languages that is the intersection of finitely many languages accepted by quasi-transitive and tree-like inverse graphs.  
\begin{theorem}\label{main theorem poly-contex}
A group $G$ has word problem in the class {\bf PT-ICF}  if and only if it is virtually a (finitely generated) subgroup of a direct product of free groups. 
\end{theorem}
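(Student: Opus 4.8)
The plan is to prove the two implications separately, in both cases via the elementary reformulation that \emph{$G$ is virtually a finitely generated subgroup of a direct product of free groups if and only if $G$ admits a finite-index subgroup $K$ together with finitely many homomorphisms $\rho_i\colon K\to \mathbb{F}_{X_i}$ onto (or into) free groups of finite rank with $\bigcap_{i=1}^k\ker\rho_i=\mathds{1}$.} Indeed, such data yield an injection $K\hookrightarrow\prod_i\rho_i(K)\le \prod_i\mathbb{F}_{X_i}$ with finitely generated image (as $K$ is finitely generated, being finite index in the finitely generated group $G$), and conversely the coordinate projections of an embedding $K\hookrightarrow\prod_i\mathbb{F}_{X_i}$ supply the $\rho_i$. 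It thus suffices to match the class {\bf PT-ICF} with this criterion.

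For $(\Leftarrow)$ I would first replace $K$ by its normal core $\Core_G(K)$, which is still of finite index, normal in $G$, and on which the restricted $\rho_i$ still have trivial common kernel; so assume $K\unlhd G$ with $Q=G/K$ finite. Setting $L_i=\ker\rho_i\unlhd K$, I claim $WP(G;A)=\bigcap_{i=1}^k L(\Sch(L_i,A),L_i)$: a word $u$ labels a circuit at the base coset in $\Sch(L_i,A)$ exactly when its image in $G$ lies in $L_i$, so it does for every $i$ precisely when that image lies in $\bigcap_iL_i=\mathds{1}$, i.e.\ when $u=_G\mathds{1}$. It then remains to check each $\Sch(L_i,A)$ is quasi-transitive and tree-like. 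Quasi-transitivity holds because the normalizer $N_G(L_i)\supseteq K$ acts by automorphisms $L_ig\mapsto L_ing$, and the number of vertex orbits equals $[G:N_G(L_i)]\le[G:K]<\infty$. Tree-likeness holds because $L_i\unlhd K$ makes the natural morphism $\Sch(L_i,A)\to\Sch(K,A)$ a regular covering of the \emph{finite} graph $\Sch(K,A)$ with free deck group $K/L_i\cong\rho_i(K)$; since this action is free, proper and cocompact, the \v{S}varc--Milnor lemma gives that $\Sch(L_i,A)$ is quasi-isometric to the free group $K/L_i$, hence to a tree (the degenerate cases $K/L_i$ trivial or cyclic being quasi-isometric to a point or a line).

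For $(\Rightarrow)$, write $WP(G;A)=\bigcap_{i=1}^k L(\Gamma_i,x_i)$ with each $\Gamma_i$ quasi-transitive and tree-like; the finitely many $\Gamma_i$ that are finite contribute regular constraints and are harmless (if all are finite then $WP(G;A)$ is regular and $G$ is finite by Anisimov's theorem, hence trivially virtually a subgroup of a product of free groups), so I assume each $\Gamma_i$ infinite. Applying condition $(5)$ of Theorem~\ref{theo: main theorem} to each $\Gamma_i$ yields a cover $\psi_i\colon\Gamma_i\to\Lambda_i$ onto a finite graph and a surjection $\oo{\eta}_i\colon\pi_1(\Lambda_i,\psi_i(x_i))\to\mathbb{F}_{X_i}$ with kernel $H_i:=\pi_1(\Gamma_i,x_i)$. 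Using Lemma~\ref{lem: there are reduced walks} one identifies $N:=\ker(\mathbb{F}_A\to G)=\oo{WP(G;A)}$ with $\bigcap_iH_i$ inside $\mathbb{F}_A$. By Proposition~\ref{prop: inclusion implies homomorphism} the inclusion $WP(G;A)\subseteq L(\Gamma_i,x_i)$ gives a morphism $\Cay(G;A)\to\Gamma_i$; composing with $\psi_i$ and taking the image $\Lambda_i'$ of the (complete) Cayley graph produces a finite, connected, \emph{complete} subgraph of $\Lambda_i$, so that $\Pi_i':=\pi_1(\Lambda_i',\psi_i(x_i))$ has finite index in $\mathbb{F}_A$ and contains $N$. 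Putting $\Pi'=\bigcap_i\Pi_i'$ (finite index in $\mathbb{F}_A$, containing $N$) and $G'=\Pi'/N$ (finite index, hence finitely generated, in $G$), the restrictions of the $\oo{\eta}_i$ to $\Pi'\le\Pi_i'\le\pi_1(\Lambda_i,\psi_i(x_i))$ have kernel $\Pi'\cap H_i\supseteq N$ and so descend to $\bar\rho_i\colon G'\to\mathbb{F}_{X_i}$ with $\bigcap_i\ker\bar\rho_i=(\Pi'\cap\bigcap_iH_i)/N=(\Pi'\cap N)/N=\mathds{1}$. Hence $G'$ satisfies the criterion of the first paragraph and $G$ is virtually a finitely generated subgroup of $\prod_i\mathbb{F}_{X_i}$.

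I expect the main obstacle to be the finite-index bookkeeping in $(\Rightarrow)$: the free groups $\pi_1(\Lambda_i,\psi_i(x_i))$ furnished by Theorem~\ref{theo: main theorem}$(5)$ need not have finite index in $\mathbb{F}_A$, since the finite graphs $\Lambda_i$ need not be complete, and without remedy the candidate subgroup $G'$ would fail to be of finite index in $G$. Passing to the image $\Lambda_i'$ of the complete graph $\Cay(G;A)$ is precisely what restores completeness, and hence finite index, while keeping $N$ inside $\Pi_i'$. In the $(\Leftarrow)$ direction the only delicate point is tree-likeness, and here the identification of $\Sch(\ker\rho_i,A)$ as a regular covering of a finite graph with free deck group is the crucial structural observation, reducing the verification to the \v{S}varc--Milnor lemma.
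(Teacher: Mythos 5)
Your proposal is correct in substance, but it takes a genuinely different route from the paper in the direction showing that a group which is virtually a finitely generated subgroup of a direct product of free groups has word problem in {\bf PT-ICF}. The paper fixes a generating set $X$ of the finite-index subgroup $H\le\prod_{i}\mathbb{F}_{Z_i}$, takes the Cayley graphs of the free quotients $H/\ker\pi_i$ over $X$, and then builds an inverse transducer $\mathcal{A}$ from a right transversal $T$; the word problem of $G$ over the generating set $Y=X\cup (T\setminus\{\mathds{1}_G\})$ is exhibited as $\bigcap_i\mathcal{A}^{-1}(L(\Gamma_i,x_i))$, and Lemma~\ref{lemma: product is quasi-transitive} (quasi-transitivity of $\mathcal{A}\ltimes\Gamma$) is what keeps these inverse images inside {\bf PT-ICF}. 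You avoid the transducer machinery entirely: working over an arbitrary generating set $A$ of $G$, you realize the word problem as $\bigcap_i L(\Sch(L_i,A),L_i)$ with $L_i=\ker\rho_i\unlhd K$, get quasi-transitivity from the normalizer action (or equivalently from the deck group $K/L_i$), and get tree-likeness by viewing $\Sch(L_i,A)\to\Sch(K,A)$ as a regular covering of a finite graph with finitely generated free deck group $K/L_i\cong\rho_i(K)$, then invoking \v{S}varc--Milnor (the action is free and proper because vertex stabilizers in automorphism groups of inverse graphs are trivial, Lemma~\ref{lem: uniqueness of automorphism}). This is more direct, requires no change of generating set, and actually proves the formally stronger statement that $WP(G;A)\in{\bf PT\mbox{-}ICF}$ for \emph{every} finite generating set $A$. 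In the converse direction your skeleton matches the paper's -- apply condition (5) of Theorem~\ref{theo: main theorem}, intersect finite-index subgroups of $\mathbb{F}_A$, and take the product of the $\oo{\eta}_i$, whose kernel is $N=\oo{WP(G;A)}=\bigcap_i\ker\oo{\eta}_i$ -- but where the paper first observes that each $\Gamma_i$ is complete (because $uu^{-1}\in WP(G;A)$ for all $u$), hence each $\Lambda_i$ is complete and $\pi_1(\Lambda_i,\psi_i(x_i))$ has finite index, you instead push the complete Cayley graph through $\psi_i$ and work with its image $\Lambda_i'$. Both work; in fact your ``obstacle'' is not there, since a complete deterministic subgraph of a connected deterministic graph must be the whole graph, so $\Lambda_i'=\Lambda_i$ -- but your detour is harmless.

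One loose end: your treatment of finite $\Gamma_i$ in the forward direction is not right as stated. If some but not all of the $\Gamma_i$ are finite, you cannot simply ``assume each $\Gamma_i$ infinite'' -- the finite ones still constrain the intersection -- and Anisimov's theorem only disposes of the case where \emph{all} are finite. The fix is one line: for a finite $\Gamma_i$, condition (5) of Theorem~\ref{theo: main theorem} holds trivially with $\Lambda_i=\Gamma_i$, $\psi_i$ the identity, and $\oo{\eta}_i$ the map onto the trivial (rank-zero) free group, since then $\ker\oo{\eta}_i=\pi_1(\Gamma_i,x_i)=\oo{L(\Gamma_i,x_i)}$; with this convention your argument goes through verbatim in the mixed case. (The paper glosses over the same point, since Theorem~\ref{theo: main theorem} is stated only for infinite graphs.)
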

\begin{rem}
The class of groups having the word problem in {\bf PT-ICF} is included in the class of co-context-free groups introduced in \cite{holt}. This follows from Proposition~\ref{prop: equivalence context-free} since the language accepted by a context-free inverse graph, is, in particular, deterministic, and the fact that the union and complement of deterministic context-free languages is still context-free \cite[Theorem~10.1]{hop}. Thus, by the previous theorem and \cite{Miha}, we deduce that for the class of poly-context-free and co-context-free groups, the membership problem is undecidable.
\end{rem}
We recall that a \emph{subdirect} product of groups $H_1,\ldots, H_n$ is a subgroup $G\le H_1\times \ldots\times H_n$ which projects surjectively onto each factor. One direction of the proof of Theorem~\ref{main theorem poly-contex} is given by the following proposition. 
\begin{prop}
Let $G$ be a group having word problem $W=WP(G;A)=\bigcap_{i=1}^kL(\Gamma_i, x_i)$ where $(\Gamma_i, x_i)$ are rooted inverse graphs that are quasi-transitive and tree-like. Then, $G$ is virtually a subdirect product of $k$ free groups. 
\end{prop}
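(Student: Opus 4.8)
The plan is to realise $G$ as a quotient of $\mathbb{F}_A$ and then to map a finite-index subgroup of $G$ subdirectly into a product of free groups using condition $(5)$ of Theorem~\ref{theo: main theorem}. Write $N=\oo{W}$ for the set of reduced words in $W$; since free reduction does not change the element represented in $G$, the set $N$ is exactly the kernel of the canonical surjection $\mathbb{F}_A\twoheadrightarrow G$, so $G\cong\mathbb{F}_A/N$. Each $(\Gamma_i,x_i)$ is quasi-transitive and tree-like, so Theorem~\ref{theo: main theorem} applies and furnishes for every $i$ a cover $\psi_i:\Gamma_i\to\Lambda_i$ onto a finite inverse graph together with a surjective homomorphism $\oo{\eta}_i:P_i\to\mathbb{F}_{X_i}$, where $P_i=\pi_1(\Lambda_i,\psi_i(x_i))\le\mathbb{F}_A$ and $\ker(\oo{\eta}_i)=\oo{L(\Gamma_i,x_i)}=:N_i$. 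By Lemma~\ref{lem: there are reduced walks} each $L(\Gamma_i,x_i)$ is closed under free reduction, so restricting the intersection to reduced words yields $N=\bigcap_{i=1}^k N_i$, with $N_i\trianglelefteq P_i$ and $P_i/N_i\cong\mathbb{F}_{X_i}$ free.

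The key step, which I expect to be the main obstacle, is to show that each $P_i$ has finite index in $\mathbb{F}_A$; this is the only place where one genuinely uses that $W$ is a \emph{word problem} and not merely an arbitrary intersection of languages. I would first argue that $\Gamma_i$ is \emph{complete}: given a vertex $v$ and a letter $a\in\wt{A}$, pick by connectedness a walk $x_i\mapright{u}v$ and note that $uaa^{-1}u^{-1}$ represents $\mathds{1}_G$, hence lies in $W\subseteq L(\Gamma_i,x_i)$; by determinism the corresponding circuit at $x_i$ reads $u$ to $v$ and then forces an $a$-edge out of $v$. Since $\psi_i$ is a surjective cover, completeness of $\Gamma_i$ passes to the finite graph $\Lambda_i$, and a finite, connected, complete, deterministic inverse graph is precisely the Schreier graph of a finite-index subgroup; thus $[\mathbb{F}_A:P_i]=|V(\Lambda_i)|<\infty$, and $P_i$ is free of finite rank by Nielsen--Schreier.

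With finite index in hand, set $D=\bigcap_{i=1}^k P_i$, a finite-index subgroup of $\mathbb{F}_A$ containing $N$ (as $N\subseteq N_i\subseteq P_i$ for each $i$). I would then consider the homomorphism
\[
\Psi:D\to\prod_{i=1}^k\mathbb{F}_{X_i},\qquad \Psi(g)=\big(\oo{\eta}_1(g),\ldots,\oo{\eta}_k(g)\big),
\]
whose kernel is $\bigcap_{i=1}^k(D\cap N_i)=D\cap N=N$. Hence $\Psi$ induces an embedding $D/N\hookrightarrow\prod_{i=1}^k\mathbb{F}_{X_i}$, and $[G:D/N]=[\mathbb{F}_A:D]<\infty$, so $D/N$ is a finite-index, finitely generated subgroup of $G$ that embeds into a direct product of $k$ free groups. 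To upgrade this to a \emph{subdirect} product, I would replace each factor $\mathbb{F}_{X_i}$ by the image $\oo{\eta}_i(D)$: since $D$ has finite index in $P_i$, this image has finite index in $\oo{\eta}_i(P_i)=\mathbb{F}_{X_i}$, hence is again free of finite rank, and by construction $\Psi(D)$ projects onto each such factor. This exhibits $D/N$ as a subdirect product of $k$ finitely generated free groups, completing the proof that $G$ is virtually a subdirect product of $k$ free groups.
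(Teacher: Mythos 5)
Your proof is correct and follows essentially the same route as the paper: completeness of each $\Gamma_i$ via words of the form $uaa^{-1}u^{-1}\in W$, finite index of $P_i=\pi_1(\Lambda_i,\psi_i(x_i))$ in $\mathbb{F}_A$ from completeness of the finite covers $\Lambda_i$ (Stallings/Schreier), and the product homomorphism on $D=\bigcap_i P_i$ with kernel $N$. Your last step is in fact more careful than the paper's: the paper asserts that the image projects onto each $\mathbb{F}_{X_i}$ ``since each $\oo{\eta}_i$ is surjective,'' which only holds for $\oo{\eta}_i$ on $P_i$ rather than on the intersection $D$, and your replacement of each factor by the finite-index (hence finitely generated free) subgroup $\oo{\eta}_i(D)$ is exactly the repair needed to make the subdirectness claim literally correct.
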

\begin{proof}
Let us recall that $\theta:\wt{A}^*\to \mathbb{F}_A$ denotes the natural projection $\theta(u)=\oo{u}$, and we put $W=WP(G;A)$. By hypothesis, $W=\bigcap_{i=1}^kL(\Gamma_i, x_i)$ where $(\Gamma_i, x_i)$ are rooted inverse graphs that are quasi-transitive and context-free. Note that $\theta(W)=\bigcap_{i=1}^k\oo{L(\Gamma_i, x_i)}$ is a normal subgroup of $\mathbb{F}_A$. In particular, for any $u\in\wt{A}^*$ we have $uu^{-1}\in WP(G;A)$, from which we conclude that each inverse graph $\Gamma_i$ is complete, i.e., for any vertex $v\in V(\Gamma_i)$ and $a\in \wt{A}$ there is an edge $v\mapright{a}v'$ in $\Gamma_i$. By Theorem~\ref{theo: main theorem} for each $i\in [1,k]$ there is a cover $\psi_i: \Gamma_i\to \Lambda_i$ with $\Lambda_i$ finite inverse graph and a surjective homomorphism $\oo{\eta}_i:\pi_1(\Lambda_i, \psi(x_i))\to \mathbb{F}_{X_i}$  onto a finite rank free group $\mathbb{F}_{X_i}$ with $\oo{L(\Gamma_i,x_i)}=\ker(\oo{\eta_i})$. Note that since each $\Gamma_i$ is complete, $\Lambda_i$ is also complete. Hence, $(\Lambda_i, \psi_i(x_i))$ is the Stallings automaton of $\pi_1(\Lambda_i, \psi_i(x_i))$ which is a finite index subgroup of $\mathbb{F}_A$, being $\Lambda_i$ complete, see for instance \cite[Chapter 23]{Handbook}. Therefore, $\mathcal{H}=\cap_i \pi_1(\Lambda_i, \psi_i(x_i))$ is a finite index subgroup of $\mathbb{F}_A$. Consider the product homomorphism $\varphi: \mathcal{H}\to \Pi_{i}^k \mathbb{F}_{X_i}$
defined by $\varphi(g)=(\oo{\eta}_1(g),\ldots, \oo{\eta}_k(g))$ for all $g\in\mathcal{H}$. Clearly $\ker\varphi=\bigcap_{i=1}^k\oo{L(\Gamma_i, x_i)}=\theta(W)$, hence $\mathcal{H}/\theta(W)$ is a finitely generated finite index subgroup of $G\simeq \mathbb{F}_A/\theta(W)$ that is isomorphic to a subgroup of $\Pi_{i}^k \mathbb{F}_{X_i}$ which projects surjectively onto each factor since each $\oo{\eta}_i$ is surjective. 
\end{proof}
To prove the other direction of Theorem~\ref{main theorem poly-contex} we need to recall the notion of inverse transducer. This notion has been firstly introduced in \cite{Pedro} as a tool to show that the fixed point subgroup of an endomorphism of a finitely generated virtually free group is finitely generated. A transducer which is also referred to as a generalized sequential machine is a finite $Y\times X^*$-digraph rooted at some vertex $p_0$. A generic edge may be graphically represented by $p_1\vlongmapright{y_1\mid u_1}p_2, y_1\in Y, u_1\in X^*$, while we may represent the walk
\[
p_1\vlongmapright{y_1\mid u_1}p_2\vlongmapright{y_2\mid u_2}p_3\ldots p_k\vlongmapright{y_k\mid u_k}p_{k+1}
\]
shortly as $p_1\vlongmapright{s\mid h}p_{k+1}$ where $s=y_1y_2\ldots y_k$, $h=u_1u_2\ldots, u_k$. A transducer $\mathcal{A}$ on the alphabet $\wt{Y}\times \wt{X}^*$ is called \emph{inverse}, if for any edge $p_1\vlongmapright{y\mid u}p_2$, $y\in \wt{Y}, u\in\wt{X}^*$, there is the reverse edge $p_2\vlongmapright{y^{-1}\mid u^{-1}}p_1$, and $\mathcal{A}$ is deterministic in the following sense: for any vertex $p$ and $z\in\wt{Y}$ there is at most one edge $p\vlongmapright{z\mid w}p'$ in $\mathcal{A}$. In this way $\mathcal{A}$ defines a partial function $\mathcal{A}:\wt{Y}^*\to \wt{X}^*$ by putting $\mathcal{A}(s)=h$ whenever in $\mathcal{A}$ there is a circuit $p_0\vlongmapright{s\mid h}p_0$. 
Given an inverse $\wt{X}$-graph $\Gamma$ and an inverse transducer $\mathcal{A}$ on the alphabet $\wt{Y}\times \wt{X}^*$ and root $p_0$, we may define the $\wt{Y}$-graph $\mathcal{A}\ltimes \Gamma$ having as set of vertices $V(\mathcal{A})\times V(\Gamma)$ and edges 
\[
(p_1, q_1)\mapright{y}(p_2,q_2)\mbox{ whenever }p_1\mapright{y\mid u}p_2\in \mathcal{A}, \,q_1\mapright{u}q_2\mbox{ is a walk in }\Gamma\mbox{ for some }u\in\wt{X}^*.
\]
It is straightforward to check that $\mathcal{A}\ltimes \Gamma$ is an involutive $\wt{Y}$-digraph that is deterministic. In general, it is missing the condition to be connected to make this graph inverse. Thus, we may think of $\mathcal{A}\ltimes \Gamma$ as a collection of disjoint union of its connected components each of which is an inverse graph. Note that by fixing a root $x_0$ in $\Gamma$ then the language $L(\mathcal{A}\ltimes \Gamma, (p_0,x_0))$ is the set of words $u\in \wt{Y}^*$ such that $p_0\vvlongmapright{u\mid \mathcal{A}(u)}p_0$ is a circuit in $\mathcal{A}$ with $\mathcal{A}(u)\in L(\Gamma, x_0)$. Thus, $L(\mathcal{A}\ltimes \Gamma, (p_0,x_0))=\mathcal{A}^{-1}(L(\Gamma, x_0))$ is the inverse image under the partial map defined by the transducer of the language $L(\Gamma, x_0)$. It is a well-known fact that context-free languages are closed under the inverse image of the partial map defined by a generic transducer, see for instance \cite[Theorem 11.2]{hop}, a fact that will be used later.
\begin{lemma}\label{lemma: product is quasi-transitive}
Let $\mathcal{A}$ be an inverse transducer on the alphabet $\wt{Y}\times \wt{X}^*$, and $(\Gamma,x_0)$ be a quasi-transitive inverse $\wt{X}$-graph. Then, $\mathcal{A}\ltimes \Gamma$ is a quasi-transitive inverse graph. In particular, if $L(\Gamma, x_0)$ is an inverse context-free language, then $\mathcal{A}^{-1}(L(\Gamma, x_0))\in {\bf PT-ICF}$. 
\end{lemma}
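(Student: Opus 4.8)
The plan is to prove the two assertions of the lemma in turn: first that every connected component of $\mathcal{A}\ltimes\Gamma$ is a quasi-transitive inverse graph, and then to read off the membership in $\mathbf{PT\text{-}ICF}$ from the identity $\mathcal{A}^{-1}(L(\Gamma,x_0))=L(\mathcal{A}\ltimes\Gamma,(p_0,x_0))$ together with Proposition~\ref{prop: equivalence context-free}. The mechanism for the first part is to transport automorphisms of $\Gamma$ to $\mathcal{A}\ltimes\Gamma$. For each $\varphi\in\Aut(\Gamma)$ I would define $\Phi_\varphi$ on vertices by $\Phi_\varphi(p,q)=(p,\varphi(q))$. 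Checking that $\Phi_\varphi$ is an automorphism is routine: if $(p_1,q_1)\mapright{y}(p_2,q_2)$ is an edge, then $p_1\mapright{y\mid u}p_2$ in $\mathcal{A}$ and $q_1\mapright{u}q_2$ in $\Gamma$ for some $u\in\wt{X}^*$, and applying $\varphi$ gives the walk $\varphi(q_1)\mapright{u}\varphi(q_2)$ in $\Gamma$, so that $(p_1,\varphi(q_1))\mapright{y}(p_2,\varphi(q_2))$ is again an edge; compatibility with the involution is immediate. Since $\Phi_{\varphi^{-1}}$ is a two-sided inverse, $\varphi\mapsto\Phi_\varphi$ is a homomorphism $\Aut(\Gamma)\to\Aut(\mathcal{A}\ltimes\Gamma)$, and the orbit of a vertex $(p,q)$ under its image is exactly $\{p\}\times(\Aut(\Gamma)\cdot q)$.

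The subtle point, and the one I expect to be the main obstacle, is that $\mathcal{A}\ltimes\Gamma$ is in general disconnected, so quasi-transitivity must be read componentwise, and the automorphisms $\Phi_\varphi$ need not preserve a fixed connected component $\Gamma'$. The way around this is the observation that an automorphism of the whole graph permutes its connected components: whenever two vertices $(p,q)$ and $(p,\varphi(q))$ happen to both lie in the same component $\Gamma'$, the map $\Phi_\varphi$ must send $\Gamma'$ to the component of $\Phi_\varphi(p,q)=(p,\varphi(q))$, which is again $\Gamma'$; hence $\Phi_\varphi$ restricts to an automorphism of $\Gamma'$ carrying $(p,q)$ to $(p,\varphi(q))$. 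Consequently, any two vertices of $\Gamma'$ with equal first coordinate and $\Aut(\Gamma)$-equivalent second coordinate are $\Aut(\Gamma')$-equivalent. I would therefore bound the number of $\Aut(\Gamma')$-orbits on $V(\Gamma')$ by $|V(\mathcal{A})|$ times the number of $\Aut(\Gamma)$-orbits on $V(\Gamma)$. As $\mathcal{A}$ is a finite digraph and $\Gamma$ is quasi-transitive, this product is finite, so every connected component of $\mathcal{A}\ltimes\Gamma$ — in particular the one containing $(p_0,x_0)$ — is a quasi-transitive inverse graph.

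For the second assertion I would argue as follows. If $L(\Gamma,x_0)$ is inverse-context-free then by Proposition~\ref{prop: equivalence context-free} it is a context-free language, and since context-free languages are closed under inverse images of the partial maps defined by transducers (\cite[Theorem 11.2]{hop}), the language $\mathcal{A}^{-1}(L(\Gamma,x_0))$ is context-free. By the identity recalled before the lemma, $\mathcal{A}^{-1}(L(\Gamma,x_0))=L(\mathcal{A}\ltimes\Gamma,(p_0,x_0))$ is precisely the language accepted by the connected component $\Gamma'$ of $(p_0,x_0)$, which by the first part is a quasi-transitive inverse graph. Applying Proposition~\ref{prop: equivalence context-free} once more, the context-freeness of $L(\Gamma',(p_0,x_0))$ upgrades to $\Gamma'$ being context-free, i.e. the language is inverse-context-free and is accepted by a quasi-transitive inverse graph. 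Being a single such language — the intersection of exactly one — it lies in $\mathbf{PT\text{-}ICF}$, which completes the proof.
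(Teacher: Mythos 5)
Your proposal is correct and follows essentially the same route as the paper: the paper likewise lifts each $\varphi\in\Aut(\Gamma)$ to the automorphism $\hat\varphi(p,q)=(p,\varphi(q))$ of $\mathcal{A}\ltimes\Gamma$, bounds the number of orbits by $|V(\mathcal{A})|$ times the number of $\Aut(\Gamma)$-orbits, and concludes via the identity $L(\mathcal{A}\ltimes\Gamma,(p_0,x_0))=\mathcal{A}^{-1}(L(\Gamma,x_0))$ together with closure of context-free languages under inverse transducer maps. Your explicit handling of the disconnectedness of $\mathcal{A}\ltimes\Gamma$ (restricting the lifted automorphisms to the component of $(p_0,x_0)$) is a point the paper treats only implicitly, but it is a refinement of, not a departure from, the same argument.
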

\begin{proof}
For every automorphism $\varphi\in\Aut(\Gamma)$ consider the map $\hat{\varphi}_v:V(\mathcal{A}\ltimes \Gamma)\rightarrow V(\mathcal{A}\ltimes \Gamma)$ defined by putting $\hat{\varphi}(p_1,q_1)=(p_1,\varphi(q_1))$. This map extends to a morphism of involutive graphs. Indeed, $(p_1,q_1)\mapright{y}(p_2,q_2)$ is an edge in $\mathcal{A}\ltimes \Gamma$ if and only if there is some $h\in\wt{X}^*$ such that $p_1\mapright{y\mid h}p_2$ is an edge in $\mathcal{A}$ and $q_1\mapright{h}q_2$ is a walk in $\Gamma$. Thus, by applying $\varphi$, in $\Gamma$ we have also the walk $\varphi(q_1)\mapright{h}\varphi(q_2)$, hence $(p_1,\varphi(q_1))\mapright{y}(p_2,\varphi(q_2))$ is an edge in $\mathcal{A}\ltimes \Gamma$. Thus, by putting 
\[
\hat{\varphi}_e((p_1,q_1)\mapright{y}(p_2,q_2))=(p_1,\varphi(q_1))\mapright{y}(p_2,\varphi(q_2))
\]
and by the determinism of $\mathcal{A}\ltimes \Gamma$, we have that $\hat{\varphi}:\mathcal{A}\ltimes \Gamma\to \mathcal{A}\ltimes \Gamma$ is a morphism of involutive $\wt{X}$-graphs. Furthermore, $\hat{\varphi}$ is an automorphism, since both $\hat{\varphi}_v$ and $\hat{\varphi}_e$ are bijective with inverses
\[
\hat{\varphi}_v^{-1}(p_1,q_1)=(p_1,\varphi^{-1}(q_1)),\quad \hat{\varphi}^{-1}_e((p_1,q_1)\mapright{y}(p_2,q_2))=(p_1,\varphi^{-1}(q_1))\mapright{y}(p_2,\varphi^{-1}(q_2))
\]
Thus, the map $\psi:\Aut(\Gamma)\to \Aut(\mathcal{A}\ltimes \Gamma)$ defined by $\psi(\varphi)= \hat{\varphi}$ is an homomorphism of groups which is also injective. Indeed, $\psi(\varphi_1)=\psi(\varphi_2)$ implies that in each connected component $\Theta$ of $\mathcal{A}\ltimes \Gamma$ and for any $(p_1,q_1)\in \Theta$ we have $\varphi_1(q_1)=\varphi_2(q_1)$, thus by Lemma~\ref{lem: uniqueness of automorphism} $\left.\varphi_1\right|_{\Theta}=\left.\varphi_2\right|_{\Theta}$, i.e., $\varphi_1=\varphi_2$. Since the action of $\Aut(\Gamma)$ on $\Gamma$ is quasi-transitive, the set $V(\Gamma)$ decomposes into finitely many orbits $O_1,\ldots, O_k$. Therefore, each $\Aut(\Gamma)$-orbit of $\mathcal{A}\ltimes \Gamma$ is contained in $ V(\mathcal{A})\times O_i$ for some $i\in\{1,\ldots, k\}$, thus $\Aut(\mathcal{A}\ltimes \Gamma)$ acts quasi-transitively on $\mathcal{A}\ltimes \Gamma$. The last statement follows from the equality $L(\mathcal{A}\ltimes \Gamma, (p_0,x_0))=\mathcal{A}^{-1}(L(\Gamma, x_0))$, which is a context-free language, being $L(\Gamma, x_0)$ context-free, which is also accepted by the quasi-transitive inverse graph $(\mathcal{A}\ltimes \Gamma, (p_0, x_0))$, i.e., $\mathcal{A}^{-1}(L(\Gamma, x_0))\in {\bf PT-ICF}$.
\end{proof}
\begin{prop}
Let $G$ be a group that is virtually a finitely generated subgroup of a direct product of free groups. Then, the word problem $W(G;Y)\in {\bf PT-ICF}$.
\end{prop}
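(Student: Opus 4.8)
The plan is to reduce, by means of the inverse-transducer machinery of Lemma~\ref{lemma: product is quasi-transitive}, to the case in which $G$ is itself a finitely generated subgroup of a product of free groups, and in that case to use the projections onto the free factors together with the Cayley graphs of the free factors. Since the class $\textbf{PT-ICF}$ is by definition closed under finite intersection, and since Lemma~\ref{lemma: product is quasi-transitive} gives $\mathcal{A}^{-1}(L(\Gamma,x_0))\in\textbf{PT-ICF}$ for every inverse transducer $\mathcal{A}$ and quasi-transitive inverse-context-free $(\Gamma,x_0)$, the class $\textbf{PT-ICF}$ is closed under inverse images of inverse transducers: if $L=\bigcap_j L(\Gamma_j,x_j)\in\textbf{PT-ICF}$ then $\mathcal{A}^{-1}(L)=\bigcap_j \mathcal{A}^{-1}(L(\Gamma_j,x_j))$ is again a finite intersection of $\textbf{PT-ICF}$ languages. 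This closure property is the engine of the whole argument.

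First I would fix a finite-index subgroup $H\le G$ that is a finitely generated subgroup of $P=\prod_{i=1}^{k}\mathbb{F}_{X_i}$; since $[G:H]<\infty$ and $H$ is finitely generated, $G$ is finitely generated, and I fix a finite generating set $Y$. Choosing a set $T$ of right coset representatives of $H$ in $G$ produces the finite Schreier graph $\Sch(H,Y)$ and a set $Z$ of Schreier generators of $H$. The Reidemeister--Schreier rewriting can be packaged as an inverse transducer $\mathcal{B}$ on $\wt{Y}\times\wt{Z}^{*}$ whose underlying $\wt{Y}$-graph is $\Sch(H,Y)$ rooted at the coset $H$, where the edge out of the coset $Ht$ reading $y\in\wt{Y}$ carries the output $\sigma(Ht,y)=ty\,\overline{ty}^{\,-1}$ (a Schreier generator in $\wt{Z}$). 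Determinism of $\Sch(H,Y)$ and the identity $\sigma(\,\overline{ty},y^{-1})=\sigma(Ht,y)^{-1}$ show that $\mathcal{B}$ is a genuine inverse transducer, and the partial map it defines is defined exactly on the words $w$ representing an element of $H$, with $\mathcal{B}(w)=_{H}w$ there. Consequently $w=_G 1$ if and only if $w$ lies in $H$ and $\mathcal{B}(w)=_H 1$, that is,
\[
WP(G;Y)=\mathcal{B}^{-1}\bigl(WP(H;Z)\bigr).
\]

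Next I would show $WP(H;Z)\in\textbf{PT-ICF}$. Each projection $\pi_i\colon H\to\mathbb{F}_{X_i}$ restricts on the generators $Z$ to a monoid homomorphism $\mu_i\colon\wt{Z}^{*}\to\wt{X_i}^{*}$ compatible with the involution, realized by the one-state inverse transducer $\mathcal{A}_i$ with a loop $p_0\vlongmapright{z\mid\mu_i(z)}p_0$ for each $z\in\wt{Z}$. Since $H$ embeds in $P$, a word $v\in\wt{Z}^{*}$ is trivial in $H$ exactly when all its projections are trivial, whence
\[
WP(H;Z)=\bigcap_{i=1}^{k}\mu_i^{-1}\bigl(L(\Cay(\mathbb{F}_{X_i};X_i),1)\bigr).
\]
Here $L(\Cay(\mathbb{F}_{X_i};X_i),1)$ is the set of words representing $1$ in $\mathbb{F}_{X_i}$, accepted by the Cayley graph of $\mathbb{F}_{X_i}$, a vertex-transitive (hence quasi-transitive) tree-like inverse graph; it is the symmetric Dyck language, hence context-free, and so inverse-context-free by Proposition~\ref{prop: equivalence context-free}. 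By Lemma~\ref{lemma: product is quasi-transitive} each $\mu_i^{-1}(\cdots)=\mathcal{A}_i^{-1}\bigl(L(\Cay(\mathbb{F}_{X_i};X_i),1)\bigr)$ lies in $\textbf{PT-ICF}$, and closure under finite intersection gives $WP(H;Z)\in\textbf{PT-ICF}$.

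Finally, combining the two reductions: writing $WP(H;Z)=\bigcap_j L(\Gamma_j,x_j)$ with each $\Gamma_j$ quasi-transitive and inverse-context-free, the closure of $\textbf{PT-ICF}$ under inverse transductions established at the start yields $WP(G;Y)=\mathcal{B}^{-1}(WP(H;Z))\in\textbf{PT-ICF}$, as each $\mathcal{B}^{-1}(L(\Gamma_j,x_j))\in\textbf{PT-ICF}$ by Lemma~\ref{lemma: product is quasi-transitive} and the class is closed under finite intersection. The hard part will be the careful bookkeeping around the transducer $\mathcal{B}$: verifying the inverse-compatibility identity $\sigma(\,\overline{ty},y^{-1})=\sigma(Ht,y)^{-1}$ so that $\mathcal{B}$ is an \emph{inverse} transducer, and checking that $\mathcal{B}$ is defined precisely on the preimage of $H$ with $\mathcal{B}(w)=_H w$, so that $\mathcal{B}^{-1}(WP(H;Z))$ is exactly $WP(G;Y)$; everything else is assembled from Lemma~\ref{lemma: product is quasi-transitive}, Proposition~\ref{prop: equivalence context-free}, and the elementary closure of $\textbf{PT-ICF}$ under finite intersection.
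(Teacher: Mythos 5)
Your proposal is correct and is essentially the paper's own argument: both rest on a coset-transversal inverse transducer to handle the finite-index subgroup $H$, on the projections of $H$ onto free groups to produce quasi-transitive, tree-like acceptors, and on Lemma~\ref{lemma: product is quasi-transitive} together with closure of ${\bf PT-ICF}$ under finite intersection. The only organizational difference is that you factor the reduction into two successive transductions (Reidemeister--Schreier rewriting onto the Schreier generators $Z$, followed by one-state transducers realizing the $\pi_i$ with targets $\Cay(\mathbb{F}_{X_i};X_i)$), whereas the paper uses a single transducer whose outputs are words over a chosen generating set $X$ of $H$ and absorbs each projection into the target graph by taking $\Cay(H/\ker\pi_i;X)$ as the acceptor; both routes invoke the lemma in the same way.
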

\begin{proof}
Let $H$ be the finite index subgroup of $G$ that is isomorphic to a subgroup of the direct product $\Pi_{i=1}^k \mathbb{F}_{Z_i}$ of the free groups $\mathbb{F}_{Z_i}$. Let $X$ be a finite generating set of $H$, and let $T$ be a right transversal with $\mathds{1}_G\in T$. Every element of $g\in G$ may be represented as $g=_Ght$ for some $h\in H$, $t\in T$. Put $Y=X\cup\{T\setminus\{\mathds{1}_G\}\}$ as the generating set of $G$. For each $y\in\wt{Y}$ and $t\in T$, fix a word $h_{t,y}\in \wt{X}^*$ with $ty=_Gh_{t,y}t'$ for some $t'\in T$.  As in \cite{holt} we construct a transducer $\mathcal{A}$ in the following way. The set of vertices is $T$ and for for any $y\in Y$ and $t_1\in T$ we have the edge
\[
t_1\vvlongmapright{y\mid h_{t_1,y}}t_2\mbox{ whenever we have }t_1y=_Gh_{t_1,y}t_2
\]
and take $\mathds{1}_G$ as the root of $\mathcal{A}$. Note that $\mathcal{A}$ is inverse. Indeed, it is involutive since 
\[
t_1\vlongmapright{y\mid h_{t_1,y}}t_2\in\mathcal{A}\mbox{ if and only if }t_2\vvlongmapright{y^{-1}\mid h_{t_1,y}^{-1}}t_1\in\mathcal{A}.
\] 
and it is also deterministic since $y_{t_1,y}t_2=_Gy_{t_1,y}t'_2$ implies $t_2=t'_2$. Now, it is straightforward to check that for any $t\in T$ and $y\in \wt{Y}^*$, in $\mathcal{A}$ we have a walk $t\longmapright{y\mid u}t'$, for some $u\in \wt{X}^*$ if and only if $ty=_G ut'$. Let $\phi:\wt{Y}^*\to G$ denote the natural projections, and let $\pi_i:H\to \Pi_{i}^k \mathbb{F}_{Z_i}$ be the projection into the $i$-th component of the direct product. The group $H/\ker\pi_i$ is a finitely generated free group whose word problem $W_i\subseteq\wt{X}^*$ is a context-free language accepted by a quasi-transitive inverse graph $(\Gamma_i,x_i)$, namely the Cayley graph $\Cay(H/\ker\pi_i;X)$. Note that any word $w\in W(G;Y)$ belonging to the word problem is characterized by the conditions $\pi_i(\phi(w))=1$ for $i=1,\ldots, k$, that is, for any $i=1,\ldots, k$ there is some $h_i\in W_i=L(\Gamma_i, x_i)$ such that $w=_G h_i$, i.e., $\mathcal{A}(w)\in L(\Gamma_i, x_i)$. Hence, by Lemma~\ref{lemma: product is quasi-transitive} we conclude that the word problem
\[
W(G;A)=\bigcap_{i=1}^k \mathcal{A}^{-1}(L(\Gamma_i, x_i))\in {\bf PT-ICF}
\]
\end{proof}

%
%

\section*{Acknowledgments}
The author is grateful to W. Woess for the fruitful discussions regarding some equivalences contained in Theorem~\ref{theo: main theorem}.

\end{document}